\documentclass{amsart}
\usepackage{amsmath, amsfonts, amssymb,color}
\usepackage{stmaryrd,epsfig}
\usepackage{xypic}

\newtheorem{thm}{Theorem}[section] 
\newtheorem{lemma}[thm]{Lemma}
\newtheorem{prop}[thm]{Proposition}
\newtheorem{cor}[thm]{Corollary}
\newtheorem{conj}[thm]{Conjecture}
\newtheorem{cons}[thm]{Consequence}

\theoremstyle{definition}
\newtheorem{remark}[thm]{Remark}
\newtheorem{example}[thm]{Example}

\DeclareMathOperator{\Gr}{Gr}
\DeclareMathOperator{\Fl}{Fl}
\DeclareMathOperator{\LG}{LG}
\DeclareMathOperator{\IG}{IG}

\DeclareMathOperator{\OG}{OG}

\DeclareMathOperator{\Mat}{Mat}
\DeclareMathOperator{\Hom}{Hom}
\DeclareMathOperator{\Aut}{Aut}
\DeclareMathOperator{\GL}{GL}

\DeclareMathOperator{\Ker}{Ker}
\DeclareMathOperator{\Span}{Span}
\DeclareMathOperator{\Spec}{Spec}
\DeclareMathOperator{\QH}{QH}
\DeclareMathOperator{\QK}{QK}

\DeclareMathOperator{\codim}{codim}
\DeclareMathOperator{\im}{Im}
\newcommand{\bP}{{\mathbb P}}
\renewcommand{\P}{{\mathbb P}}
\newcommand{\N}{{\mathbb N}}
\newcommand{\Z}{{\mathbb Z}}
\newcommand{\Q}{{\mathbb Q}}
\newcommand{\C}{{\mathbb C}}
\newcommand{\cA}{{\mathcal A}}
\newcommand{\cB}{{\mathcal B}}
\newcommand{\cE}{{\mathcal E}}
\newcommand{\cF}{{\mathcal F}}

\newcommand{\cO}{{\mathcal O}}
\newcommand{\cS}{{\mathcal S}}

\newcommand{\cc}{\!:\!}

\newcommand{\euler}[1]{\chi_{_{#1}}}
\newcommand{\ch}{\operatorname{ch}}
\newcommand{\Td}{\operatorname{Td}}
\newcommand{\pt}{\text{point}}
\newcommand{\id}{\text{id}}
\newcommand{\bull}{{\scriptscriptstyle \bullet}}
\newcommand{\Bl}{\text{B$\ell$}}
\newcommand{\sh}{\operatorname{shape}}
\newcommand{\ev}{\operatorname{ev}}
\newcommand{\wt}{\widetilde}
\newcommand{\wh}{\widehat}
\newcommand{\wb}{\overline}
\newcommand{\hmm}[1]{\mbox{}\hspace{#1mm}\mbox{}}
\newcommand{\pic}[2]{\includegraphics[scale=#1]{#2}}

\newcommand{\ignore}[1]{}

\newcommand{\Mb}{\wb{\mathcal M}}
\newcommand{\pr}{\operatorname{pr}}
\newcommand{\groth}{{\mathcal G}}

\newenvironment{abcenum}{\begin{enumerate}}{\end{enumerate}}


\begin{document}

\title{Quantum $K$-theory of Grassmannians}

\date{April 18, 2009}

\author{Anders~S.~Buch}
\address{Department of Mathematics, Rutgers University, 110
  Frelinghuysen Road, Piscataway, NJ 08854, USA}
\email{asbuch@math.rutgers.edu}

\author{Leonardo~C.~Mihalcea} 
\address{Department of Mathematics, Duke University, P.O. Box 90320
  Durham, NC 27708-0320}
\email{lmihalce@math.duke.edu}

\subjclass[2000]{Primary 14N35; Secondary 19E08, 14M15, 14N15, 14E08}

\thanks{The first author was supported in part by NSF Grant
  DMS-0603822.}

\begin{abstract}
  We show that (equivariant) $K$-theoretic 3-point Gromov-Witten
  invariants of genus zero on a Grassmann variety are equal to triple
  intersections computed in the (equivariant) $K$-theory of a two-step
  flag manifold, thus generalizing an earlier result of Buch, Kresch,
  and Tamvakis.  In the process we show that the Gromov-Witten variety
  of curves passing through $3$ general points is irreducible and
  rational.  Our applications include Pieri and Giambelli formulas for
  the quantum $K$-theory ring of a Grassmannian, which determine the
  multiplication in this ring.  We also compute the dual Schubert
  basis for this ring, and show that its structure constants satisfy
  $S_3$-symmetry.  Our formula for Gromov-Witten invariants can be
  partially generalized to cominuscule homogeneous spaces by using a
  construction of Chaput, Manivel, and Perrin.
\end{abstract}

\maketitle

\section{Introduction}

The study of the (small) quantum cohomology ring began with Witten
\cite{witten:two-dimensional} and Kontsevich
\cite{kontsevich.manin:quantum} more than a decade ago, and has by now
evolved into a subject with deep ramifications in algebraic geometry,
representation theory and combinatorics, see e.g.\ 
\cite{bertram:quantum, fulton.pandharipande:notes,
  fomin.gelfand.ea:quantum, ciocan-fontanine:on, kim:quantum*1,
  rietsch:totally, postnikov:affine, lam.shimozono:quantum} and
references therein.

A result of Buch, Kresch, and Tamvakis
\cite{buch.kresch.ea:gromov-witten} reduced the computation of the
(3-point, genus 0) Gromov-Witten invariants of a Grassmann variety to
a computation in the ordinary cohomology of certain two-step flag
manifolds.  The Gromov-Witten invariants in question have an
enumerative interpretation: they count rational curves meeting general
translates of Schubert varieties.  The identity from
\cite{buch.kresch.ea:gromov-witten} was proved by establishing a
set-theoretic bijection between the curves counted by a Gromov-Witten
invariant and the points of intersection of three Schubert varieties
in general position in a two-step flag manifold.

The Gromov-Witten invariants used to define more general quantum
cohomology theories, such as equivariant quantum cohomology
\cite{givental:equivariant,kim:on*3} or quantum $K$-theory
\cite{givental:on,lee:quantum}, lack such an enumerative
interpretation. For example, the $K$-theoretic Gromov-Witten
invariants are equal to the sheaf Euler characteristic of
Gromov-Witten varieties of rational curves of fixed degree meeting
three general Schubert varieties.  Gromov-Witten varieties are
subvarieties of Kontsevich's moduli space of stable maps, and have
been studied by Lee and Pandharipande
\cite{pandharipande:canonical,lee.pandharipande:reconstruction}.

Despite lacking an enumerative interpretation, it turns out that the
more general Gromov-Witten invariants satisfy the same identity as the
one given earlier in \cite{buch.kresch.ea:gromov-witten}: an
equivariant $K$-theoretic Gromov-Witten invariant on a Grassmannian is
equal to a quantity computed in the ordinary equivariant $K$-theory of
a two-step flag variety.  This is our first main result.

There are two key elements in its proof.  The first is a commutative
diagram involving a variety $\Bl_d$, which dominates both Kontsevich's
moduli space of stable maps and a triple Grassmann-bundle over the
two-step flag variety from \cite{buch.kresch.ea:gromov-witten}.
Intuitively, for small degrees $d$, the variety $\Bl_d$ is the blow-up
of the moduli space along the locus of curves which have kernels and
spans (in the sense of \cite{buch:quantum}, see \S
\ref{ssection:qkgrass} below) of unexpected dimensions.  Our
commutative diagram combined with geometric properties of Kontsevich's
moduli space makes it possible to translate the computation of a
Gromov-Witten invariant from the moduli space to the Grassmann bundle,
where classical intersection theory provides the final answer.

This approach suffices to compute all the equivariant Gromov-Witten
invariants, and the (equivariant) $K$-theoretic invariants for small
degrees.  To compute $K$-theoretic invariants of large degrees $d$, we
need the second key element.  We will show that the Gromov-Witten
variety of curves of degree $d$ passing through $3$ general points is
irreducible and rational.  This gives a partial answer to a question
posed by Lee and Pandharipande in
\cite{lee.pandharipande:reconstruction} (see \S \ref{S:gwvar} below).

In addition to Grassmannians of type A, the formula for Gromov-Witten
invariants given in \cite{buch.kresch.ea:gromov-witten} was also
obtained for Lagrangian Grassmannians and maximal orthogonal
Grassmannians.  Chaput, Manivel, and Perrin later gave a
type-independent construction which generalized the formula to all
cominuscule spaces.  Our formula for Gromov-Witten invariants holds
partially in this generality.  To be precise, we will compute all the
(3 point, genus zero) equivariant Gromov-Witten invariants on any
cominuscule homogeneous space, as well as all the (equivariant)
$K$-theoretic invariants for small degrees.

Our main motivation for this paper was to determine the structure of
the {\em quantum $K$-theory ring\/} of a Grassmann variety.  This ring
was introduced by Givental and Lee, motivated by a study of the
relationship between Gromov-Witten theory and integrable systems
\cite{givental:on, givental.lee:quantum, lee:quantum}.  We describe
the ring structure in terms of a {\em Pieri rule\/} that shows how to
multiply arbitrary Schubert classes with special Schubert classes,
corresponding to the Chern classes of the universal quotient bundle.
Since the special Schubert classes generate the quantum $K$-theory
ring, this gives a complete combinatorial description of its
structure.  As an application, we prove a Giambelli formula that
expresses any Schubert class as a polynomial in the special classes;
this formula greatly simplifies the computation of products of
arbitrary Schubert classes.  We also prove that the structure
constants of the quantum $K$-theory ring satisfy $S_3$-symmetry in the
sense that they are invariant under permutations of indexing
partitions, and that the basis of Schubert structure sheaves can be
dualized by multiplying all classes with a constant element.  We
remark that unlike ordinary quantum cohomology, the structure
constants in quantum $K$-theory are not single Gromov-Witten
invariants (see \cite{givental:on} and \S \ref{S:qk} below).  This
poses additional combinatorial difficulties for proving our Pieri
formula.

For complete flag manifolds $G/B$, formulas for similar
purposes have been conjectured by Lenart and Maeno
\cite{lenart.maeno:quantum} and by Lenart and Postnikov
\cite{lenart.postnikov:affine}, based on a combinatorial point of
view.  However, due to the lack of functoriality in quantum
cohomology, the connection to our results is unclear.

This paper is organized as follows.  In section~\ref{S:gwvar} we prove
that Gromov-Witten varieties of high degree for Grassmannians of type
A are rational.  Section \ref{S:ktheory} sets up notation for
equivariant $K$-theory and proves a criterion ensuring that the
pushforward of the Grothendieck class of a variety is equal to the
Grothendieck class of its image.  Section~\ref{S:gwinv} contains a
general discussion of Gromov-Witten invariants of various types, and
proves our formula for equivariant $K$-theoretic Gromov-Witten
invariants of Grassmannians.  In Section~\ref{S:qk} we define the
quantum $K$-theory ring of a Grassmann variety, state our Pieri
formula, derive a Giambelli type formula, prove the $S_3$-symmetry
property for the structure constants, and dualize the Schubert basis.
We also discuss consequences and applications to computing structure
constants and $K$-theoretic Gromov-Witten invariants, and provide the
multiplication tables for the equivariant quantum $K$-theory rings of
$\P^1$ and $\P^2$.  Section~\ref{S:proof} contains the proof of our
Pieri formula, and section~\ref{S:comin} generalizes our formula for
Gromov-Witten invariants to cominuscule spaces.

\subsection{Acknowledgments}

This paper continues the above mentioned work with Andrew Kresch and
Harry Tamvakis; we are grateful for the ideas and insights that have
undoubtedly floated between the two projects.  During the preparation
of this paper we also benefited from helpful discussions and comments
of many other people, including Arend Bayer, Prakash Belkale, Izzet
Coskun, Ionut Ciocan-Fontanine, Johan de Jong, Friedrich Knop, Shrawan
Kumar, and Chris Woodward.  Special thanks are due to Ravi Vakil who
made valuable comments on an earlier proof of the rationality of
Gromov-Witten varieties.  The second author thanks the Max Planck
Institut f\"ur Mathematik for a providing a stimulating environment
during the early stages of this work.


\section{Rationality of Gromov-Witten varieties} \label{S:gwvar}

Let $X = G/P$ be a homogeneous space defined by a complex connected
semi-simple linear algebraic group $G$ and a parabolic subgroup $P$.  An
$N$-pointed {\em stable map\/} (of genus zero) to $X$ is a morphism of
varieties $f : C \to X$, where $C$ is a tree of projective lines,
together with $N$ distinct non-singular marked points of $C$, ordered
from 1 to $N$, such that any component of $C$ that is mapped to a
single point in $X$ contains at least three special points, where
special means marked or singular \cite{kontsevich:enumeration}.  The
{\em degree\/} of $f$ is the homology class $f_* [C] \in H_2(X;\Z)$.

A fundamental tool in Gromov-Witten theory is Kontsevich's moduli
space $\Mb_{0,N}(X,d)$, which parametrizes all $N$-pointed stable maps
to $X$ of degree $d$.  This space is equipped with {\em evaluation
  maps\/} $\ev_i : \Mb_{0,N}(X,d) \to X$ for $1 \leq i \leq N$, where
$\ev_i$ sends a stable map $f$ to its image of the $i$-th marked point
in its domain $C$.  We let $\ev = \ev_1 \times \dots \times \ev_N :
\Mb_{0,N} \to X^N := X \times \dots \times X$ denote the total
evaluation map.  For $N\geq 3$, there is also a forgetful map $\rho :
\Mb_{0,N}(X,d) \to \Mb_{0,N} := \Mb_{0,N}(\pt,0)$ which sends a stable
map to its domain (after collapsing unstable components).  The
(coarse) moduli space $\Mb_{0,N}(X,d)$ is a normal projective variety
with at worst finite quotient singularities, and its dimension is
given by
\begin{equation}
  \dim \Mb_{0,N}(X,d) = \dim(X) + \int_d c_1(T_X) + N-3 \,.
\end{equation}
We refer to the notes \cite{fulton.pandharipande:notes} for the
construction of this space.  It has been proved by Kim and
Pandharipande \cite{kim.pandharipande:connectedness} and by Thomsen
\cite{thomsen:irreducibility} that the Kontsevich space
$\Mb_{0,N}(X,d)$ is irreducible.  Kim and Pandharipande also showed that
$\Mb_{0,N}(X,d)$ is rational.

Recall that a {\em Schubert variety\/} in $X$ is an orbit closure for
the action of a Borel subgroup $B \subset G$.  A collection of
Schubert varieties $\Omega_1, \dots, \Omega_N$ can be moved in {\em general
  position\/} by translating them with general elements of $G$.  Given
such a collection and a degree $d \in H_2(X)$, there is a {\em
  Gromov-Witten variety\/} defined by
\begin{equation}
  GW_d(\Omega_1, \dots, \Omega_N) = \ev^{-1}(\Omega_1 \times \dots
  \times \Omega_N) \ \subset \Mb_{0,N}(X,d) \,.
\end{equation}
When this intersection is finite, its cardinality is the {\em
  Gromov-Witten invariant\/} associated to the corresponding Schubert
classes.  In general, its Euler characteristic defines a {\em
  $K$-theoretic Gromov-Witten invariant} \cite{lee:quantum}.

Lee and Pandharipande asked which Gromov-Witten varieties are rational
in \cite{lee.pandharipande:reconstruction}.  They also announced that
for a fixed degree $d \in H_2(\bP^1)$, the Gromov-Witten variety
$GW_d(P_1,\dots,P_N) \subset \Mb_{0,N}(\bP^1,d)$ is rational for only
finitely many integers $N$, where $P_1,\dots,P_N \in \bP^1$ are
general points.  Pandharipande had earlier shown that the variety
$GW_d(P_1,\dots,P_{3d-2}) \subset \Mb_{0,3d-2}(\bP^2,d)$ is a
non-singular curve of positive genus for $d \geq 3$
\cite{pandharipande:canonical}.  In preparation for our computation of
$K$-theoretic Gromov-Witten invariants in section~\ref{S:gwinv} we
will prove that, if $X$ is a Grassmannian of type A and $N \geq 3$ is
a fixed integer, then $GW_d(\Omega_1,\dots,\Omega_N) \subset
\Mb_{0,N}(X,d)$ is rational for all but finitely many degrees $d$.
Our formulas for Gromov-Witten invariants can in turn be used to
locate non-rational 3-pointed Gromov-Witten varieties of positive
dimension, see Example~\ref{E:nonrat3pt}.

Let $X = \Gr(m,n) = \{ V \subset \C^n : \dim V = m \}$ be the
Grassmannian of $m$-dimensional vector subspaces of $\C^n$.  This
variety has a tautological subbundle $\cS \subset \cO_X^{\oplus n} = X
\times \C^n$ given by $\cS = \{ (V,u) \in X \times \C^n : u \in V \}$.
Let $e_1,\dots,e_n$ be the standard basis of $\C^n$.  Notice that
$H_2(X) = \Z$, so the degree of a stable map to $X$ can be identified
with a non-negative integer.

\begin{thm} \label{T:gwrat}
  Let $P \in X^N$ and $Q \in \Mb_{0,N}$ be general points, with $N\geq
  3$, and let $d \geq 0$.  Then the intersection $\ev^{-1}(P) \bigcap
  \rho^{-1}(Q) \subset \Mb_{0,N}(X,d)$ is either empty or an
  irreducible rational variety.
\end{thm}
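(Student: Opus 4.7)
The plan is to reduce the claim to a statement about the variety of degree-$d$ morphisms $f\colon\P^1\to X$ satisfying $N$ point conditions, and then to exhibit an explicit rational parametrization. I would first argue that for generic $Q\in\Mb_{0,N}$ the underlying curve is a smooth $\P^1$ with distinct marked points $q_1,\ldots,q_N$, and (using the $PGL_2$-action) that we may take $q_1=0$. The fiber $\rho^{-1}(Q)$ decomposes into the open stratum parametrizing stable maps whose domain is this smooth $\P^1$---birational to $\mathrm{Mor}_d(\P^1,X)$ with its evaluations fixed at the $q_i$---together with boundary strata parametrizing maps whose domain acquires extra rational ``bubble'' components. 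A dimension count shows that every boundary stratum lies in the closure of the open stratum and meets $\ev^{-1}(P)$ in strictly smaller dimension for generic $P$. Hence, up to a strictly lower-dimensional correction, $\ev^{-1}(P)\cap\rho^{-1}(Q)$ agrees with
$$W = \{f\in\mathrm{Mor}_d(\P^1,X) : f(q_i)=P_i,\ 1\le i\le N\},$$
and it suffices to show $W$ is empty or irreducible and rational.

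I would next parametrize the dense open \emph{balanced} stratum of $\mathrm{Mor}_d(\P^1,\Gr(m,n))$. A morphism $f$ of degree $d$ pulls $\cS$ back to a rank-$m$ subbundle $f^*\cS\subset\cO_{\P^1}^n$ of degree $-d$; on the balanced stratum, $f^*\cS\simeq\cO(-\lceil d/m\rceil)^r\oplus\cO(-\lfloor d/m\rfloor)^{m-r}$. Choosing a trivialization encodes $f$ as an $n\times m$ matrix $M(t)=M_0+M_1 t+M_2 t^2+\cdots$ of polynomials (with prescribed column-degree bounds summing to $d$) whose column span at each $t$ is $f(t)$, modulo right multiplication by $\Aut(f^*\cS)$. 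With $q_1=0$, the condition $f(q_1)=P_1$ forces $M_0$ to have column span $P_1$, and the ambient $GL_m$-gauge freedom normalizes $M_0$ to any fixed matrix $A_1$ with $\mathrm{colspan}(A_1)=P_1$.

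With $M_0$ fixed, the remaining conditions $f(q_i)=P_i$ for $i\ge 2$ require each column of $A_1+\sum_{j\ge 1}q_i^j M_j$ to lie in $P_i\subset\C^n$. Projecting to $\C^n/P_i$ turns these into an inhomogeneous system of $(N-1)m(n-m)$ linear equations in the entries of $M_1,M_2,\ldots$. A Vandermonde-style argument---exploiting the distinctness of the $q_i$'s and the general position of the $P_i$'s---shows that the coefficient matrix of this system has maximal rank for generic $(P_i,q_i)$. So the solution set is either empty or an affine subspace of the expected dimension $nd-(N-1)m(n-m)$; intersecting with the open condition that $M(t)$ have rank $m$ for every $t\in\P^1$ yields an open subset of an affine space, which is irreducible and rational. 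Since rationality and irreducibility are birational invariants, the same conclusion holds for $W$.

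I expect the main obstacle to lie in the dimension bookkeeping for the two exceptional loci used above: establishing that the non-balanced splitting-type strata of $\mathrm{Mor}_d$ and the bubbled-domain boundary strata of $\rho^{-1}(Q)$ both meet $\ev^{-1}(P)$ in strictly lower dimension than the main stratum. Granted these bounds, the explicit affine stratum constructed above is dense in $W$, and the theorem follows.
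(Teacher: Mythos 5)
Your overall strategy is the same as the paper's: restrict to the general fiber $\rho^{-1}(Q)$, parametrize the dense locus of maps from $\P^1$ with balanced pullback $f^*\cS$ by matrices of polynomials modulo automorphisms, observe that the point conditions are affine-linear in these coordinates, and control the exceptional loci by a transversality/dimension argument. However, there is a genuine gap in your gauge-fixing step. When $m \nmid d$ the balanced bundle is $\cE=\cO(-p)^{\oplus s}\oplus\cO(-p-1)^{\oplus r}$ with $r,s>0$, and the gauge group is $\Aut(\cE)$, not $\GL(m)$: right multiplication by a constant invertible matrix mixes columns with different degree bounds, so it is not a symmetry of your parametrization. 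Concretely, $\Aut(\cE)$ consists of block matrices $\left(\begin{smallmatrix} A & \beta(t)\\ 0 & B\end{smallmatrix}\right)$ with $A\in\GL(s)$, $B\in\GL(r)$, and $\beta$ an $s\times r$ matrix of linear forms; its evaluation at $t=q_1$ sweeps out only a parabolic subgroup of $\GL(m)$, so you cannot normalize $M_0=M(q_1)$ to an arbitrary fixed matrix with column span $P_1$. Worse, even after fixing $M(q_1)$ as far as the gauge allows, the kernel of evaluation at $q_1$ is an $rs$-dimensional residual gauge group acting nontrivially on the higher coefficients. Hence the affine solution set you construct is not birational to $W$; it dominates $W$ with positive-dimensional fibers, so your final step (``rationality and irreducibility are birational invariants'') breaks down and yields only unirationality. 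The paper avoids this by choosing explicit unique representatives for the full $\Aut(\cE)$-action (normalizing the leading coefficients $u_{ip}$ and $v_{i,p+1}$ and the coefficient $v_{ip}$), after which the relevant open subset of $\rho^{-1}(Q)$ is literally an open subset of an affine space and all $N$ point conditions become affine equations in those coordinates.

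Two smaller points. The ``maximal rank / expected dimension'' claim via a Vandermonde-style argument is both unsubstantiated and unnecessary: the theorem asserts nothing about the dimension, and an open subset of an affine-linear subspace of any dimension is already irreducible and rational. Also, the dimension bookkeeping you defer (that no component of $\ev^{-1}(P)\cap\rho^{-1}(Q)$ lies entirely in the bubbled or non-balanced strata) is exactly where the paper invokes Kleiman's transversality theorem, combined with the lower bound $\dim\ge\dim\rho^{-1}(Q)-N\dim X$ valid for every component of a fiber of $\ev$; ``the boundary meets $\ev^{-1}(P)$ in strictly smaller dimension'' alone does not rule out extra low-dimensional components, so this step also needs to be made precise for the irreducibility claim.
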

\begin{proof}
  Since the point $Q \in \Mb_{0,N}$ is general, it consists of $N$
  distinct points $(x_1\cc y_1), \dots, (x_N\cc y_N)$ in $\bP^1$.
  Write $d = mp+r$ where $0 \leq r < m$, and set $s = m-r$.  Define
  the vector bundle $\cE = \cO(-p)^{\oplus s} \oplus \cO(-p-1)^{\oplus
    r}$ on $\P^1$.  Then the inverse image $\rho^{-1}(Q) \subset
  \Mb_{0,N}(X,d)$ has a dense open subset of maps $f : \bP^1 \to X$
  for which $f^*(\cS) \cong \cE$.  Any map in this subset is given by
  an injective element in $\Hom_{\P^1}(\cE, \cO^{\oplus n})$, and two
  injective elements define the same map if and only if they differ by
  an automorphism of $\cE$.  The marked points in the domain of each
  map is given by $Q$.
  
  We have $\Hom_{\P^1}(\cE,\cO^{\oplus n}) = (\C^n)^{s(p+1)} \oplus
  (\C^n)^{r(p+2)} = \{(u_{ij},v_{ij})\}$, where $u_{ij} \in \C^n$ is
  defined for $1 \leq i \leq s$ and $0 \leq j \leq p$, and $v_{ij} \in
  \C^n$ is defined for $1 \leq i \leq r$ and $0 \leq j \leq p+1$.  An
  injective element $(u_{ij},v_{ij})$ defines the map $f : \P^1 \to X$
  which sends $(x\!:\!y) \in \P^1$ to the span of the vectors
  $\sum_{j=0}^p x^j y^{p-j}\, u_{ij}$ for $1 \leq i \leq s$ and
  $\sum_{j=0}^{p+1} x^j y^{p+1-j}\, v_{ij}$ for $1 \leq i \leq r$. The
  automorphisms of $\cE$ are given by $\Aut(\cE) = \GL(s) \times
  \GL(r) \times \Mat(r,s) \times \Mat(r,s)$, and composition with the
  element $(a,b,c,c') \in \Aut(\cE)$ is given by $(a,b,c,c') .
  (u_{ij},v_{ij}) = (u'_{ij},v'_{ij})$ where $u'_{ij} = \sum_{l=1}^s
  a_{il} u_{lj}$ and $v'_{ij} = \sum_{l=1}^{r} b_{il} v_{lj} +
  \sum_{l=1}^{s} c_{il} u_{lj} + \sum_{l=1}^s c'_{il} u_{l,j-1}$.
  Here we write $u_{ij} = 0$ if $j \in \{-1,p+1\}$.
  
  It follows from this description that a dense open subset $U$ of the
  injective elements of $\Hom_{\P^1}(\cE, \cO^{\oplus n})$ modulo the
  action of $\Aut(\cE)$ have unique representatives of the form
  $(u_{ij},v_{ij})$ where $u_{ip} = e_i + \wt u_{ip}$ with $\wt u_{ip}
  \in 0^s \oplus \C^{n-s}$; $v_{i,p+1} = e_{s+i} + \wt v_{i,p+1}$ with
  $\wt v_{i,p+1} \in 0^m \oplus \C^{n-m}$; and $v_{ip} \in 0^s \oplus
  \C^{n-s}$.  This shows that $U \subset \rho^{-1}(Q)$ is isomorphic
  to an open subset of an affine space, so $\rho^{-1}(Q)$ is rational.
  The evaluation maps $\ev_k : U \to X$ are then given by
  $\ev_k(u_{ij},v_{ij}) = f(x_k\cc y_k)$ where $f : \bP^1 \to X$ is
  defined as above.
  
  It follows from Kleiman's transversality theorem
  \cite[Thm.~2]{kleiman:transversality} that all components of
  $\ev^{-1}(P) \bigcap \rho^{-1}(Q)$ meet $U$.  If we write $P =
  (V_1,\dots,V_N)$ with $V_i \in X$, then we conclude that
  $\ev^{-1}(P) \bigcap \rho^{-1}(Q)$ is birational to the set of
  points $(u_{ij},v_{ij}) \in U$ which satisfy that $f(x_k\cc y_k) =
  V_k$ for each $k$.  In other words, we require that $\sum_{j=0}^p
  x_k^j y_k^{p-j} u_{ij} \in V_k$ for $1 \leq i \leq s$ and
  $\sum_{j=0}^{p+1} x_k^j y_k^{p+1-j} v_{ij} \in V_k$ for $1 \leq i
  \leq r$.  Since this amounts to a set of affine equations on
  $\{(u_{ij}, v_{ij})\}$, we conclude that $\ev^{-1}(P) \bigcap
  \rho^{-1}(Q)$ is either empty or rational, as claimed.
\end{proof}

The following corollary will be used to compute $K$-theoretic
Gromov-Witten invariants of large degrees.

\begin{cor} \label{C:ratA}
  Let $V_1, V_2, V_3 \in X$ be general points.  Then $GW_d(V_1, V_2,
  V_3)$ is rational for all degrees $d \geq \max(m,n-m)$.
\end{cor}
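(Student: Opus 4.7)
The plan is to deduce the corollary directly from Theorem~\ref{T:gwrat} after verifying that the Gromov-Witten variety is non-empty in the given degree range. Since $N = 3$ and the moduli space $\Mb_{0,3}$ of domain curves is a single point (three marked points on $\bP^1$ are rigid up to reparameterization), for the unique $Q \in \Mb_{0,3}$ we have $\rho^{-1}(Q) = \Mb_{0,3}(X,d)$ outright. Taking $P = (V_1, V_2, V_3)$, Theorem~\ref{T:gwrat} then asserts that $GW_d(V_1, V_2, V_3) = \ev^{-1}(P) \cap \rho^{-1}(Q)$ is either empty or an irreducible rational variety, so the corollary reduces to showing non-emptiness whenever $d \geq \max(m, n-m)$.

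To exhibit a degree-$d$ stable map through $V_1, V_2, V_3$ I would use the parameterization of $\rho^{-1}(Q)$ from the proof of Theorem~\ref{T:gwrat}. Writing $d = mp + r$ with $0 \leq r < m$ and $s = m - r$, the hypothesis $d \geq m$ forces $p \geq 1$, so the generic splitting type $\cE = \cO(-p)^{\oplus s} \oplus \cO(-p-1)^{\oplus r}$ has no global sections and the resulting curves have trivial kernel; meanwhile $d \geq n - m$ gives $s(p+1) + r(p+2) = m + d \geq n$, so the parameter vectors $(u_{ij}, v_{ij})$ can span all of $\C^n$. Choosing three distinct points $(x_k \cc y_k) \in \bP^1$, the incidence conditions $f(x_k \cc y_k) = V_k$ form a linear system in the parameters, and a dimension count using the above bounds together with the general position of $V_1, V_2, V_3$ produces a solution defining an injective element of $\Hom_{\bP^1}(\cE, \cO^{\oplus n})$, hence a genuine stable map of degree $d$ through the three points.

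The main obstacle is the non-emptiness step, particularly at the extremal case $d = \max(m, n-m)$ where the parameter count is tight. That the bound is genuinely the right one comes from the standard kernel-span constraints on rational curves in $\Gr(m,n)$: any degree-$d$ curve has a kernel $K \subseteq \bigcap_t f(t)$ with $\dim K \geq m - d$ and a span $S \supseteq \sum_t f(t)$ with $\dim S \leq m + d$; for generic $V_i$'s the intersection and sum force $K = 0$ and a maximal $S$, giving the two inequalities $d \geq m$ and $d \geq n - m$. For larger $d$ the parameter space has room to spare and the construction above works with substantial freedom.
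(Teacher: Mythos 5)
Your first step coincides with the paper's: since $\Mb_{0,3}$ is a single point, Theorem~\ref{T:gwrat} reduces the corollary to showing that $GW_d(V_1,V_2,V_3)$ is non-empty, and that is where your argument has a genuine gap. The incidence conditions $f(x_k\cc y_k)=V_k$ do form a linear system, with $n(m+d)$ unknowns and $3m(n-m)$ equations, so its solution space $L\subset\Hom_{\P^1}(\cE,\cO^{\oplus n})$ has dimension at least $n(m+d)-3m(n-m)=m^2+nd-2m(n-m)\ge m^2$ when $d\ge\max(m,n-m)$. But this count proves nothing: $\Aut(\cE)$ has dimension exactly $m^2$ and acts on $L$, and $L$ always contains large degenerate subspaces (for instance all elements supported in a single row of $(u_{ij},v_{ij})$), none of which are injective. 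What you must show is precisely that $L$ is not contained in the locus of degenerate elements, and "a dimension count produces a solution defining an injective element" assumes this rather than proving it. Moreover, sheaf-injectivity is not even sufficient: to obtain a point of $\Mb_{0,3}(X,d)$ of degree exactly $d$ from your parameterization you need a fiberwise injection $\cE\hookrightarrow\cO^{\oplus n}$ (otherwise saturation produces a map of strictly smaller degree), so you must also avoid a determinantal degeneracy locus, an open condition you never verify. The paper sidesteps all of this with an explicit construction: assuming $m\le n-m\le d$, it chooses an auxiliary point $V_3'\in\Gr(m,V_1+V_2)$, applies \cite[Prop.~1]{buch.kresch.ea:gromov-witten} twice to get a degree-$m$ curve through $V_1,V_2,V_3'$ and a degree-$d_2$ curve through $V_3',V_3$ with $d_2=\min(n-2m,m)$, glues them at $V_3'$ into a stable map of degree $m+d_2\le n-m\le d$, and attaches extra components to reach degree exactly $d$. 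Note that it does not need an irreducible curve of the generic splitting type, which is what your route would have to produce.

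A secondary point: your closing heuristic that the kernel--span constraints force $d\ge m$ and $d\ge n-m$ for general $V_i$ is not correct. The kernel need only lie in $V_1\cap V_2\cap V_3$ and the span need only contain $V_1+V_2+V_3$, which for general points gives $d\ge\min(m,2(n-m))$ and $d\ge\min(2m,n-m)$; for example on $\Gr(1,4)=\P^3$ there are conics through three general points, so $GW_2$ is non-empty even though $\max(m,n-m)=3$. This does not affect the corollary, which asserts only sufficiency of the bound, but it shows that the bound cannot simply be read off from kernel--span constraints, and that the real content of the non-emptiness step is an actual construction such as the paper's.
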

\begin{proof}
  Since $\Mb_{0,3}$ has only one point, the intersection of
  Theorem~\ref{T:gwrat} is equal to $GW_d(V_1,V_2,V_3)$.  We must show
  that this variety is not empty.  We may assume that $m \leq n-m \leq
  d$.  Then the dimension of the vector space $W = V_1 + V_2 \subset
  \C^n$ is $2m$.  Choose $V'_3 \in \Gr(m,W) \subset \Gr(m,\C^n)$ such
  that $V'_3 \cap V_1 = V'_3 \cap V_2 = 0$ and $V'_3 \cap V_3 = W \cap
  V_3$.  Using e.g.\ \cite[Prop.~1]{buch.kresch.ea:gromov-witten} we
  can find a rational map $f_1 : \bP^1 \to \Gr(m,W)$ of degree $m$
  such that $V_1$, $V_2$, and $V'_3$ are contained in the image of
  $f_1$.  Note that $A = V_3 \cap V'_3$ has dimension $m-d_2$ and $B
  = V_3 + V'_3$ has dimension $m+d_2$ where $d_2 = \min(n-2m,m)$.
  Another application of \cite[Prop.~1]{buch.kresch.ea:gromov-witten}
  now shows that $V_3$ and $V'_3$ are contained in the image of a
  rational map $f_2 : \bP^1 \to \Gr(d_2,B/A) \subset X$ of degree
  $d_2$.  If we let $C$ be the union of the domains of $f_1$ and
  $f_2$, with the points mapping to $V'_3$ identified, then $f_1$ and
  $f_2$ define a stable map $C \to X$ of degree $m+d_2 \leq n-m \leq
  d$ whose image contains $V_1$, $V_2$, and $V_3$.  If necessary, we
  can add extra components to $C$ to obtain a stable map of degree
  $d$.  This constructs a point of $GW_d(V_1,V_2,V_3)$ and finishes the
  proof.
\end{proof}

We finally give a ``converse'' to Lee and Pandharipande's announcement
from \cite[p.~1379]{lee.pandharipande:reconstruction}.  Notice that
for sufficiently large degrees $d$, the map $(\ev, \rho) :
\Mb_{0,N}(X,d) \to X^N \times \Mb_{0,N}$ is surjective.
Theorem~\ref{T:gwrat} implies that its fibers are irreducible for all
points in a dense subset of $X^N \times \Mb_{0,N}$, and using the
Stein factorization we deduce that all fibers are connected.

\begin{cor}
  Let $\Omega_1, \Omega_2, \dots, \Omega_N \subset X$ be Schubert
  varieties in general position.  Choose $d$ large enough so that the
  map $\Mb_{0,N}(X,d) \to X^N \times \Mb_{0,N}$ is surjective.  Then
  the Gromov-Witten variety $GW_d(\Omega_1, \dots, \Omega_N)$ is
  birational to an affine bundle over $\Omega_1 \times \dots \times
  \Omega_N \times \Mb_{0,N}$.  In particular,
  $GW_d(\Omega_1,\dots,\Omega_N)$ is rational.
\end{cor}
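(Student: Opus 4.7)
\smallskip\noindent
\textbf{Proof plan.} The plan is to restrict $(\ev, \rho)$ to $GW_d := GW_d(\Omega_1, \dots, \Omega_N)$, obtaining a surjective morphism $\pi : GW_d \to B$ with $B := \Omega_1 \times \cdots \times \Omega_N \times \Mb_{0,N}$, and then to exhibit $\pi$ as birationally equivalent to an affine bundle $\mathcal{A} \to B$. Since $B$ is rational---each $\Omega_k$ contains a dense affine cell, and $\Mb_{0,N}$ is well known to be rational---the rationality of $GW_d$ will follow at once.

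The starting point is the explicit parametrization in the proof of Theorem \ref{T:gwrat}. For a general $(P, Q) \in X^N \times \Mb_{0,N}$, that proof identifies $\ev^{-1}(P) \cap \rho^{-1}(Q)$ birationally with the locus in the affine space $U$ of normal-form representatives of $\Hom_{\P^1}(\cE, \cO^{\oplus n})/\Aut(\cE)$ cut out by the linear conditions $\sum_{j=0}^p x_k^j y_k^{p-j}\, u_{ij} \in V_k$ and $\sum_{j=0}^{p+1} x_k^j y_k^{p+1-j}\, v_{ij} \in V_k$ for $k = 1, \dots, N$. The crucial observation is that the coefficients of these linear conditions depend polynomially on $(V_k, (x_k\cc y_k))$, while $U$ itself varies algebraically with $Q$. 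Letting $(P, Q)$ vary, these data assemble into a relative incidence variety $\mathcal{A} \to B^\circ$ over a dense open $B^\circ \subset B$; by Kleiman's transversality theorem, the fiber over each point of $B^\circ$ is an open subset of a linear subspace of some fixed dimension $\delta$.

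Hence $\mathcal{A} \to B^\circ$ is an affine bundle of rank $\delta$. The tautological incidence relation supplies a dominant rational map $\mathcal{A} \dashrightarrow GW_d$ over $B$; this map is birational because both sides have generic fiber of dimension $\delta$ over $B^\circ$, and the generic fibers of $\pi$ are irreducible by Theorem \ref{T:gwrat}---with all fibers of $\pi$ connected by Stein factorization, as noted just before the corollary. Combined with the rationality of $B$, this yields the rationality of $GW_d$.

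The principal obstacle is constructing $\mathcal{A}$ rigorously as $Q$ varies: the quotient $\Hom_{\P^1}(\cE, \cO^{\oplus n})/\Aut(\cE)$ is not globally a scheme, so the slice $U$ of normal-form representatives must be realized as a rational section of the $\Aut(\cE)$-action on the relative $\Hom$-scheme over an open subset of $\Mb_{0,N}$. The explicit normal form in the proof of Theorem \ref{T:gwrat} makes this feasible, but checking that this section and the linear conditions on $V_1,\dots,V_N$ both depend algebraically on $(P, Q)$ in a family is the main technical step.
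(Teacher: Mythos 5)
Your construction of the relative incidence variety $\mathcal{A}\to B^\circ$ is essentially the paper's own second step (the parametrization from the proof of Theorem~\ref{T:gwrat} redone in a family over $X^N\times\Mb_{0,N}$), but there is a genuine gap: you never establish that $GW_d(\Omega_1,\dots,\Omega_N)$ is irreducible, and your birationality argument does not substitute for this. Knowing that the fibers of $\pi$ over a dense open subset of $B$ are irreducible of dimension $\delta$, together with connectedness of all fibers, only produces a unique component of $GW_d$ dominating $B$; it does not exclude further components of $GW_d$ lying entirely over proper closed subsets of $B$, where the fibers of $(\ev,\rho)$ may jump in dimension. Connectedness of fibers cannot rule these out, since distinct components of a connected variety are allowed to meet. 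Without irreducibility, the assertion that $GW_d$ is birational to an (irreducible) affine bundle over $B$, and hence its rationality, does not follow. The paper closes exactly this hole first: by the Kleiman--Bertini theorem \cite[Remark~7]{kleiman:transversality}, for general translates $GW_d$ is locally irreducible, and then connectedness of all fibers of the proper surjective map $\pi$ (the Stein factorization remark you quote), together with irreducibility of $B$, forces $GW_d$ to be connected, hence irreducible.

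A second, smaller issue: you cite Theorem~\ref{T:gwrat} for the irreducibility of the generic fibers of $\pi$, but that theorem concerns general points of $X^N\times\Mb_{0,N}$, whereas points of $B=\Omega_1\times\dots\times\Omega_N\times\Mb_{0,N}$ are special whenever some $\Omega_i\neq X$. To apply it (and, for that matter, to know that your normal-form chart captures the generic fiber of $\pi$ at all) you must use the general-position hypothesis: for instance, since $G^N\times\Omega_1\times\dots\times\Omega_N\to X^N$ is surjective, for general translates the dense open locus of $X^N\times\Mb_{0,N}$ on which the conclusion of Theorem~\ref{T:gwrat} and your parametrization are valid meets $B$ in a dense open subset. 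This is fixable, but it is a step that must actually be carried out; in the paper the general-position hypothesis enters instead through the Kleiman--Bertini argument described above.
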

\begin{proof}
  We first show that $GW_d(\Omega_1, \dots, \Omega_N)$ is an
  irreducible variety.  The Kleiman-Bertini theorem
  \cite[Remark~7]{kleiman:transversality} implies that this variety is
  locally irreducible, so it suffices to prove that it is connected.
  This follows because all fibers of the proper surjective map
  $(\ev,\rho) : GW_d(\Omega_1,\dots,\Omega_N) \to \Omega_1
  \times\dots\times \Omega_N \times \Mb_{0,N}$ are connected.
  Finally, to see that $GW_d(\Omega_1,\dots,\Omega_N)$ is birational
  to an affine bundle, we observe that the construction used to prove
  Theorem~2.1 also parametrizes an open subset of $\Mb_{0,N}(X,d)$ in
  terms of local coordinates on $X^N \times \Mb_{0,N}$.
\end{proof}

\begin{remark} \label{R:unirat}
  In section \ref{S:comin} we will examine $K$-theoretic Gromov-Witten
  invariants for cominuscule homogeneous spaces, at which point a
  cominuscule analogue of Corollary~\ref{C:ratA} would be desirable.
  We have work in progress showing that certain Gromov-Witten
  varieties are unirational, which suffices for our purposes.  This
  includes 3-points Gromov-Witten varieties for maximal orthogonal
  Grassmannians and 2-point Gromov-Witten varieties for Lagrangian
  Grassmannians.  We can also show that 3-point Gromov-Witten
  varieties for Lagrangian Grassmannians have a rational component,
  but not that they are irreducible.  These developments will be
  explained elsewhere.  
\ignore{
  For three general points
  $V_1,V_2,V_3$ in $X = \OG(2,5)$, one can show that the variety
  $GW_3(V_1,V_2,V_3)$ is birational to the hypersurface $\Spec
  \C[a,b,c,d,e,f,g]/(P)$, where the polynomial $P$ is defined by:
  \[\begin{split}
    P =& -2a^2d+2fe+ad^2+dab^2+2ca^2d-edb+edc^2+f^2ab+a^2df-2cfa \\
       & -2efa+a^2bf+2bfc-bfe+b^2fa-bfc^2-4adf+adc^2+a^2de-2a^2db \\
       & +2dab-2cad+d^2ab-2ecf-2cd^2a+2dfe-f-2dfab-3deab+3f^2 \\
       & +2ecad-2bcad+fabe+2fa+ad-ed^2+3de^2-fe^2+a^2d^2-2cf^2 \\
       & +a^3d-fc^2-ed-f^2a+2adcf+2dcf-f^2e-bf+2cf-df-fa^2
    \end{split}
  \]
  It would be interesting to know if this hypersurface is rational.
}
\end{remark}



\section{Direct images of Grothendieck classes} \label{S:ktheory}

In this section we prove some facts about equivariant $K$-theory in
preparation for our computation of Gromov-Witten invariants.  Our main
references are Chapter 5 in \cite{chriss.ginzburg:representation} and
Section 15.1 in \cite{fulton:intersection}.  To honor the assumptions
in the first reference, we will assume that all varieties are
quasi-projective over $\C$.

Let $G$ be a complex linear algebraic group and let $X$ be a
(quasi-projective) $G$-variety.  An {\em equivariant sheaf\/} on $X$
is a coherent $\cO_X$-module $\cF$ together with a given isomorphism
$I : a^*\cF \cong p_X^*\cF$, where $a : G \times X \to X$ is the
action and $p_X : G \times X \to X$ the projection.  This isomorphism
must satisfy that $(m \times \id_X)^* I = p_{23}^*I \circ (\id_G
\times a)^*I$ as morphisms of sheaves on $G \times G \times G$, where
$m$ is the group operation on $G$ and $p_{23}$ is the projection to
the last two factors of $G \times G \times X$.

The {\em equivariant $K$-homology group\/} $K_G(X)$ is the
Grothendieck group of equivariant sheaves on $X$, i.e.\ the free
Abelian group generated by isomorphism classes $[\cF]$ of equivariant
sheaves, modulo relations saying that $[\cF] = [\cF'] + [\cF'']$ if
there exists an equivariant exact sequence $0 \to \cF' \to \cF \to
\cF'' \to 0$.  This group is a module over the {\em equivariant
  $K$-cohomology ring\/} $K^G(X)$, defined as the Grothendieck group
of equivariant vector bundles on $X$.  Both the multiplicative
structure of $K^G(X)$ and the module structure of $K_G(X)$ are given
by tensor products.  The {\em Grothendieck class\/} of $X$ is the
class $[\cO_X] \in K^G(X)$ of its structure sheaf.  If $X$ is
non-singular, then the implicit map $K^G(X) \to K_G(X)$ that sends a
vector bundle to its sheaf of sections is an isomorphism; this follows
because every equivariant sheaf on $X$ has a finite resolution by
equivariant vector bundles
\cite[5.1.28]{chriss.ginzburg:representation}.

Given an equivariant morphism of $G$-varieties $f : X \to Y$, there is
a ring homomorphism $f^* : K^G(Y) \to K^G(X)$ defined by pullback of
vector bundles.  If $f$ is proper, then there is also a pushforward
map $f_* : K_G(X) \to K_G(Y)$ defined by $f_*[\cF] = \sum_{i \geq 0}
(-1)^i [R^i f_* \cF]$.  This map is a homomorphism of $K^G(Y)$-modules
by the projection formula \cite[Ex.~III.8.3]{hartshorne:algebraic*1}.
The Godement resolution can be used to obtain equivariant structures
on the higher direct image sheaves of $\cF$.  Both pullback and
pushforward are functorial with respect to composition of morphisms.

Recall that the variety $X$ has {\em rational singularities\/} if
there exists a desingularization $\pi : \wt X \to X$ for which $\pi_*
\cO_{\wt X} = \cO_X$ and $R^i \pi_* \cO_{\wt X} = 0$ for $i>0$.  When
this is true, these identities hold for any desingularization, and $X$
is normal (see e.g the proof of
\cite[Cor.~11.4]{hartshorne:algebraic*1}).  If $X$ is a
$G$-variety, then it follows from
\cite[Thm.~7.6.1]{villamayor-u:patching} or
\cite[Thm.~13.2]{bierstone.milman:canonical} that $X$ has an {\em
  equivariant desingularization}.  More precisely, there exists an
equivariant projective birational morphism $\pi : \wt X \to X$ from a
non-singular $G$-variety $\wt X$ (see e.g.\ \cite[\S
4]{reichstein.youssin:equivariant}).  This implies that
$\pi_*[\cO_{\wt X}] = [\cO_X] \in K_G(X)$.  More generally, if $f : X
\to Y$ is any equivariant proper birational map of $G$-varieties with
rational singularities, then the composition $f \pi : \wt X \to Y$ is
a desingularization of $Y$, so we obtain $f_* [\cO_X] = f_* \pi_*
[\cO_{\wt X}] = [\cO_Y] \in K_G(Y)$ by functoriality.  We need the
following generalization.

\begin{thm} \label{T:push}
  Let $f : X \to Y$ be a surjective equivariant map of projective
  $G$-varieties with rational singularities.  Assume that the general
  fiber of $f$ is rational, i.e.\ $f^{-1}(y)$ is an irreducible
  rational variety for all closed points in a dense open subset of
  $Y$.  Then $f_*[\cO_X] = [\cO_Y] \in K_G(Y)$.
\end{thm}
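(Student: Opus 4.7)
The plan is to verify that $f_*\cO_X = \cO_Y$ and $R^i f_*\cO_X = 0$ for $i > 0$, which together give the asserted equality of Grothendieck classes. First I would reduce to the case where $X$ and $Y$ are smooth. Starting with an equivariant desingularization $\pi_Y : \wt Y \to Y$, form the fiber product $X \times_Y \wt Y$ and take an equivariant desingularization $\wt X$ of it; this provides an equivariant commutative square with a morphism $\wt f : \wt X \to \wt Y$ of smooth projective varieties together with proper birational maps $\pi : \wt X \to X$ and $\pi_Y : \wt Y \to Y$. Since $X$ and $Y$ have rational singularities, the discussion preceding the theorem gives $\pi_*[\cO_{\wt X}] = [\cO_X]$ and $\pi_{Y,*}[\cO_{\wt Y}] = [\cO_Y]$, so by functoriality
\[ f_*[\cO_X] \;=\; f_* \pi_*[\cO_{\wt X}] \;=\; \pi_{Y,*} \wt f_*[\cO_{\wt X}], \]
and it suffices to prove $\wt f_*[\cO_{\wt X}] = [\cO_{\wt Y}]$. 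Note that $\wt f$ is surjective, and its general fiber, being birational to a general fiber of $f$, is rational.

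Second, I would establish $\wt f_*\cO_{\wt X} = \cO_{\wt Y}$ via the Stein factorization $\wt f = g \circ h$, where $h : \wt X \to Z$ has connected fibers and $g : Z \to \wt Y$ is finite. Rationality of the general fiber forces it to be connected, so $g$ has generic degree one, hence is birational; since $\wt Y$ is smooth (in particular normal) and $g$ is finite and birational, Zariski's main theorem implies that $g$ is an isomorphism, whence $\wt f_*\cO_{\wt X} = h_*\cO_{\wt X} = \cO_{\wt Y}$.

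The main obstacle is the vanishing $R^i \wt f_*\cO_{\wt X} = 0$ for $i > 0$. A smooth projective rational variety $F$ satisfies $H^i(F,\cO_F) = 0$ for $i > 0$, since $h^{0,i}$ is a birational invariant of smooth projective varieties and vanishes on $\bP^N$. By generic smoothness, $\wt f$ is smooth with rational fibers over a dense open $U \subset \wt Y$, and cohomology and base change give $R^i \wt f_*\cO_{\wt X}|_U = 0$ for $i > 0$. Promoting this generic vanishing to global vanishing requires ruling out torsion of $R^i \wt f_*\cO_{\wt X}$ supported on $\wt Y \setminus U$; for this I would invoke Koll\'ar's theorem that, for a surjective morphism between smooth projective complex varieties whose general fiber is rationally connected (in particular rational), $R^i \wt f_*\cO_{\wt X}$ vanishes for all $i > 0$. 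Combined with the previous paragraphs, this yields $\wt f_*[\cO_{\wt X}] = [\cO_{\wt Y}]$ and hence the theorem by functoriality.
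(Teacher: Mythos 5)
Your proposal is correct and rests on the same two pillars as the paper's argument: Stein factorization plus connectedness of the general fiber to get $\wt f_*\cO_{\wt X} = \cO_{\wt Y}$, and Koll\'ar's work on higher direct images to kill $R^i\wt f_*\cO_{\wt X}$ for $i>0$; but your reduction is different. The paper desingularizes only the source, keeps the singular target $Y$, and applies the two-way form of Koll\'ar's Theorem~7.1 (for $\psi:\wt X \to Y$ with $\wt X$ smooth and $Y$ merely normal), where ``$Y$ has rational singularities'' enters precisely as the hypothesis in part (ii); the remaining work is then to show that the \emph{geometric generic} fiber is connected with vanishing $H^i(\cO)$, which the paper does via a connectedness lemma and semicontinuity/base change from the general closed fibers. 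You instead resolve $Y$ as well and pass through a resolution of $X\times_Y\wt Y$ to get a morphism of smooth projective varieties, then quote the smooth-base consequence of Koll\'ar (vanishing of $R^i\cO$ for fibrations with rationally connected general fiber), and descend using $\pi_{Y,*}[\cO_{\wt Y}]=[\cO_Y]$. This works and is a legitimate alternative, but it buys the smooth-base statement at the cost of extra bookkeeping that you pass over quickly: the fiber product may be reducible, so you must take its dominant ($G$-stable) component to ensure $\wt X\to X$ is birational and equivariant; the claim that the general fiber of $\wt f$ is irreducible, rational, and birational to the general fiber of $f$ needs a short dimension-count argument (the exceptional loci of $\wt X\to X\times_Y\wt Y$ meet general fibers in lower-dimensional sets); and the theorem you invoke is itself Koll\'ar's Theorem~7.1 in the smooth-base case, whose hypotheses concern the geometric generic fiber, so the transfer from ``general closed fiber rational'' is exactly the semicontinuity/base-change step that the paper writes out. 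The paper's route avoids the fiber product entirely because the iff-form of Koll\'ar's theorem absorbs the rational singularities of $Y$ directly.
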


We will deduce this statement from the following result of Koll\'ar
\cite[Thm.~7.1]{kollar:higher}.

\begin{thm}[Koll\'ar] \label{T:kollar}
  Let $\psi : X \to Y$ be a surjective map between projective
  varieties, with $X$ smooth and $Y$ normal.  Assume that the
  geometric generic fiber $F = X \times_Y \Spec \wb{\C(Y)}$ is
  connected.  Then the following are equivalent:

(i) $R^i \psi_* \cO_X = 0$ for all $i>0$;

(ii) $Y$ has rational singularities and $H^i(F,\cO_F) = 0$ for all
$i>0$.
\end{thm}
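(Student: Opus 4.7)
\textbf{Proof plan for Theorem \ref{T:kollar}.}
The plan is to reduce to a smooth-over-smooth situation by resolving $Y$, then exploit Kollár's decomposition and torsion-freeness theorems for higher direct images in order to make the Leray spectral sequence for the resolution transparent. Choose a resolution $\pi:\tilde Y\to Y$ of $Y$ and a resolution $X'\to X\times_Y\tilde Y$, giving proper maps $p:X'\to X$ (birational) and $\tilde\psi:X'\to\tilde Y$ (surjective, whose geometric generic fiber is birational to $F$). Since $X$ is smooth, $Rp_*\cO_{X'}=\cO_X$, and so in $D^b(Y)$
\[
R\psi_*\cO_X \;=\; R\psi_*\,Rp_*\cO_{X'} \;=\; R\pi_*\bigl(R\tilde\psi_*\cO_{X'}\bigr).
\]
This identity reduces both implications to the study of $R\tilde\psi_*\cO_{X'}$.

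\emph{Direction (i)$\Rightarrow$(ii).} The vanishing $H^i(F,\cO_F)=0$ for $i>0$ is immediate: the stalk of $R^i\psi_*\cO_X$ at the generic point is $H^i(X_\eta,\cO)$, and flat base change to $\wb{\C(Y)}$ gives $H^i(F,\cO_F)$. Normality of $Y$ with connected geometric generic fibers gives $\pi_*\cO_{\tilde Y}=\cO_Y$, so for rational singularities it remains to show $R^i\pi_*\cO_{\tilde Y}=0$ for $i>0$. Invoking Kollár's decomposition theorem to split $R\tilde\psi_*\cO_{X'}\cong\bigoplus_{i\ge 0}R^i\tilde\psi_*\cO_{X'}[-i]$ in $D^b(\tilde Y)$, we apply $R\pi_*$ and take cohomology to obtain
\[
R^n\psi_*\cO_X \;\cong\; \bigoplus_{i+j=n} R^j\pi_*\bigl(R^i\tilde\psi_*\cO_{X'}\bigr).
\]
Assumption (i) makes the left-hand side vanish for $n>0$, forcing each summand on the right to vanish; the piece $(i,j)=(0,n)$, together with $\tilde\psi_*\cO_{X'}=\cO_{\tilde Y}$ (again from normality plus connected geometric generic fiber), yields $R^n\pi_*\cO_{\tilde Y}=0$ for $n>0$.

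\emph{Direction (ii)$\Rightarrow$(i).} The assumption $H^i(F,\cO_F)=0$ together with cohomology and base change at the generic point shows that each $R^i\tilde\psi_*\cO_{X'}$ with $i>0$ has vanishing generic stalk. Kollár's torsion-freeness and decomposition theorems, proved for $\omega_{X'}$ and transferred to $\cO_{X'}$ via Grothendieck-Serre duality over the smooth base $\tilde Y$, upgrade generic vanishing to global vanishing, so $R^i\tilde\psi_*\cO_{X'}=0$ for $i>0$. Hence $R\tilde\psi_*\cO_{X'}=\cO_{\tilde Y}$; the setup identity collapses to $R\psi_*\cO_X=R\pi_*\cO_{\tilde Y}$, and the rationality of the singularities of $Y$ supplies the required vanishing of $R^i\psi_*\cO_X$ for $i>0$.

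\emph{Main obstacle.} The two nontrivial inputs are Kollár's decomposition $R\tilde\psi_*\cO_{X'}\cong\bigoplus R^i\tilde\psi_*\cO_{X'}[-i]$ in $D^b(\tilde Y)$ and the torsion-freeness of each $R^i\tilde\psi_*\cO_{X'}$. Both originate in Hodge theory (relative Hodge-to-de Rham $E_1$-degeneration), are established first for the dualizing sheaf, and must be transferred to $\cO$ via Grothendieck-Serre duality with careful attention to shifts; that transfer is the one place where the argument is more than formal. Once these inputs are granted, the remainder of the proof is routine spectral-sequence bookkeeping.
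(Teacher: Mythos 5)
First, a remark on the comparison you asked for: the paper does not prove this statement at all --- it is quoted from Koll\'ar's \emph{Higher direct images of dualizing sheaves I}, Thm.~7.1, and used as a black box --- so there is no in-paper argument to measure your sketch against; I can only assess it on its own terms.

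Your overall route (resolve $Y$, resolve the main component of $X\times_Y\wt Y$ to get $p:X'\to X$ and $\wt\psi:X'\to\wt Y$, use $R\psi_*\cO_X\cong R\pi_*\bigl(R\wt\psi_*\cO_{X'}\bigr)$, and feed in Koll\'ar's theorems on higher direct images of dualizing sheaves) is sound and close to how the theorem is actually proved. The gap is in the ``transfer'' you rely on: the two statements you invoke for $\cO_{X'}$ do not follow from the $\omega$-statements by Grothendieck--Serre duality, and both are false in general. Duality gives $R\wt\psi_*\cO_{X'}\cong R\mathcal{H}om\bigl(R\wt\psi_*\omega_{X'},\omega_{\wt Y}\bigr)[\dim\wt Y-\dim X']$, so Koll\'ar's splitting of $R\wt\psi_*\omega_{X'}$ only decomposes $R\wt\psi_*\cO_{X'}$ into the shifted \emph{complexes} $R\mathcal{H}om(R^i\wt\psi_*\omega_{X'},\omega_{\wt Y})$, not into its own shifted cohomology sheaves; whenever some $R^i\wt\psi_*\omega_{X'}$ fails to be locally free these duals have nonzero higher $\mathcal{E}\!xt$ sheaves, which are torsion. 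Hence neither your displayed formula $R^n\psi_*\cO_X\cong\bigoplus_{i+j=n}R^j\pi_*(R^i\wt\psi_*\cO_{X'})$ nor the torsion-freeness of $R^i\wt\psi_*\cO_{X'}$ (your mechanism for promoting generic vanishing to vanishing in (ii)$\Rightarrow$(i)) is justified; this is a genuine gap, not a matter of shifts.

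Fortunately only weaker, correct consequences are needed, obtained by dualizing whole complexes instead of transferring sheaf-by-sheaf. Let $d$ be the generic fiber dimension of $\wt\psi$. Since $\wt\psi_*\cO_{X'}=\cO_{\wt Y}$ and $R^d\wt\psi_*\omega_{X'}\cong\omega_{\wt Y}$ (connected fibers), Koll\'ar's decomposition exhibits $\omega_{\wt Y}[-d]$ as a direct summand of $R\wt\psi_*\omega_{X'}$; dualizing, $\cO_{\wt Y}$ is a direct summand of $R\wt\psi_*\cO_{X'}$, so $R\pi_*\cO_{\wt Y}$ is a direct summand of $R\psi_*\cO_X$, and hypothesis (i) forces $R^i\pi_*\cO_{\wt Y}=0$ for $i>0$, i.e.\ $Y$ has rational singularities (your derivation of $H^i(F,\cO_F)=0$ from (i) is fine). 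Conversely, under (ii), Serre duality on the smooth generic fiber converts $H^i(F,\cO_F)=0$ for $i>0$ into generic vanishing of $R^j\wt\psi_*\omega_{X'}$ for $j<d$; Koll\'ar's torsion-freeness --- applied to $\omega_{X'}$, where it is true --- then gives $R^j\wt\psi_*\omega_{X'}=0$ for $j<d$, so $R\wt\psi_*\omega_{X'}\cong\omega_{\wt Y}[-d]$, and dualizing yields $R\wt\psi_*\cO_{X'}\cong\cO_{\wt Y}$; rationality of the singularities of $Y$ then collapses $R\psi_*\cO_X=R\pi_*\cO_{\wt Y}=\cO_Y$ as you intended. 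With this repair your outline becomes a proof.
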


To show that Koll\'ar's theorem applies to our situation, we need the
following two lemmas; they are most likely known, but since we lack a
reference, we supply their proofs.

\begin{lemma} \label{L:connected} Let $\varphi : X \to Y$ be a
  dominant morphism of irreducible varieties, and assume that
  $\varphi^{-1}(y)$ is connected for all closed points $y$ in a dense
  open subset of $Y$.  Then the geometric generic fiber $X \times_Y
  \Spec \wb{\C(Y)}$ is connected.
\end{lemma}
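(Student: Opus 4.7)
The plan is to compactify $\varphi$ to a proper morphism, apply Stein factorization, and use the hypothesis on closed-point fibers to show that the finite part of this factorization has degree one; the conclusion then follows from the function-field characterization of geometric integrality in characteristic zero. Concretely, let $\overline{X}$ and $\overline{Y}$ be projective completions of $X$ and $Y$, and replace $\overline{X}$ by the closure of the graph of $\varphi$ in $\overline{X}\times\overline{Y}$. This produces a proper surjective morphism $\overline{\varphi}\colon\overline{X}\to\overline{Y}$ of irreducible projective varieties that extends $\varphi$ and contains $X$ as an open dense subvariety. Let $\overline{\varphi}=g\circ h$ be the Stein factorization, with $h\colon\overline{X}\to Z$ proper with geometrically connected fibers and $g\colon Z\to\overline{Y}$ finite of some degree $n\geq 1$; the task is to show $n=1$.

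For $y$ in a suitable dense open subset of $\overline{Y}$, the morphism $g$ is étale over $y$, so $g^{-1}(y)=\{z_1,\dots,z_n\}$ consists of $n$ distinct points, and $\overline{\varphi}^{-1}(y)=\bigsqcup_{i=1}^{n}h^{-1}(z_i)$ is its decomposition into connected components. Intersecting with the open subset $X\subset\overline{X}$ gives a disjoint decomposition $\varphi^{-1}(y)=\bigsqcup_{i=1}^{n}(X\cap h^{-1}(z_i))$ into simultaneously open and closed pieces. Since $\varphi$ is dominant, $\varphi^{-1}(y)$ is nonempty on a dense open subset of $Y$, and by assumption it is connected there, so exactly one summand is nonempty and $|h(X)\cap g^{-1}(y)|=1$ for generic $y$. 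For the opposite bound, observe that $g\circ h(X)=\varphi(X)$ is dense in $\overline{Y}$ and $g$ is finite (hence dimension-preserving), so the constructible subset $h(X)\subset Z$ has dimension $\dim Z$; by irreducibility of $Z$, its closure is all of $Z$, and $h(X)$ contains a nonempty open $W\subset Z$. Then $g(Z\setminus W)$ is a proper closed subset of $\overline{Y}$, and for $y$ outside it we have $g^{-1}(y)\subset W\subset h(X)$, yielding $|h(X)\cap g^{-1}(y)|=n$. Comparing the two bounds forces $n=1$.

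The step I expect to be the main obstacle is precisely this passage from connectedness of the \emph{open} fibers $\varphi^{-1}(y)$ to a rigidity statement for the Stein factorization of the \emph{proper} morphism $\overline{\varphi}$; it is resolved by the dimension argument above showing that $h(X)$ is dense in $Z$, which prevents the $n$ sheets of $g$ from being hidden inside the boundary $\overline{X}\setminus X$. Once $n=1$ is established, the Stein factorization identifies $\C(Z)$ with the algebraic closure $L$ of $\C(Y)$ inside $\C(\overline{X})=\C(X)$, and $[L:\C(Y)]=\deg g=1$ means that $\C(Y)$ is algebraically closed in $\C(X)$. In characteristic zero this is equivalent to $\C(X)$ being a regular extension of $\C(Y)$, so the generic fiber $X_\eta$ is geometrically integral; in particular $X\times_Y\Spec\overline{\C(Y)}$ is irreducible and reduced, and so connected, as required.
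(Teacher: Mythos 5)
Your first two paragraphs are essentially correct: the compactification, the Stein factorization $\overline{\varphi}=g\circ h$, and the two counts of $\lvert h(X)\cap g^{-1}(y)\rvert$ do establish $\deg g=1$. The gap is in the final paragraph. The identification of $\C(Z)$ with the algebraic closure $L$ of $\C(Y)$ in $\C(X)$ is not valid in general: elements of $L$ are merely rational functions on $\overline{X}$, algebraic over $\C(Y)$, and they need not be regular along the fibers unless $\overline{X}$ (or at least its generic fiber) is normal, which your construction does not provide --- and normalizing $\overline{X}$ is not an option, since the fibers of the normalization may become disconnected. Worse, the conclusion you draw from it, geometric integrality of the generic fiber, is actually false under the hypotheses of the lemma. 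Take $Y=\mathbb{A}^1$ with coordinate $y$ and $X=\{([u:v:w],y)\in\P^2\times\mathbb{A}^1 : u^2=yv^2\}$ with $\varphi$ the projection: $X$ is irreducible, every fiber over $y\neq 0$ is a connected pair of lines through $[0:0:1]$, and one computes $H^0$ of the generic fiber to be $\C(y)$, so indeed $\deg g=1$; yet the algebraic closure of $\C(y)$ in $\C(X)$ is $\C(\sqrt{y})\neq\C(y)=\C(Z)$, and the geometric generic fiber is a pair of crossing lines --- connected, as the lemma asserts, but not irreducible.

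What your correct steps actually give is $H^0(\overline{X}_{\bar\eta},\cO)=\wb{\C(Y)}$, i.e.\ connectedness of the geometric generic fiber of the \emph{compactified} map; the missing step is the descent to the open subscheme $X_{\bar\eta}\subset\overline{X}_{\bar\eta}$, and this cannot be waved through, since a dense open subset of a connected but reducible scheme can be disconnected (two components of $\overline{X}_{\bar\eta}$ could a priori meet only inside the boundary). This descent is exactly the crux of the lemma, and the paper handles it by a completely different route: it never compactifies, but assumes the geometric generic fiber is disconnected, spreads the resulting idempotents out over a finitely generated subalgebra $R'\subset\wb{\C(Y)}$ using Grothendieck's generic freeness, and specializes at closed points of $\Spec(R')$ to produce disconnected closed fibers of $\varphi$ itself, contradicting the hypothesis. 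To repair your argument you would need such a specialization statement (or the constructibility of the locus of geometrically connected fibers, EGA IV 9.7.9), not the Stein factorization of a compactification.
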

\begin{proof}
  We may assume that $X = \Spec(S)$ and $Y = \Spec(R)$ are both
  affine.  If the geometric generic fiber is disconnected, then there
  exists non-zero elements $f, g \in S \otimes_R \wb{K(R)}$ such that
  $f+g=1$, $f^2=f$, $g^2=g$, and $fg=0$ (cf.\ 
  \cite[Ex.~2.19]{hartshorne:algebraic*1}).  These elements $f$ and
  $g$ will be contained in $S \otimes_R R'$ for some finitely
  generated $\C$-algebra $R'$ with $R \subset R' \subset \wb{K(R)}$.
  Now consider the diagram:
  \[\xymatrix{ 
    \Spec(S) \ar[r]^{\varphi} & \Spec(R) \\
    \Spec(S \otimes_R R')\ar[u] \ar[r] & \Spec(R')\ar[u] }
  \]
  
  By Grothendieck's generic freeness lemma, we may assume that $S =
  \bigoplus_i R s_i$ is a free $R$-module, after replacing $R$ with a
  localization $R_h$.  Write $f = \sum s_i \otimes f_i$ and $g = \sum
  s_i \otimes g_i$ with $f_i, g_i \in R'$, and choose $i$ and $j$ such
  that $h = f_i g_j \in R'$ is non-zero.  Then the images of $f$ and
  $g$ in $S \otimes_R R'/P$ are non-zero for each (closed) point $P
  \in \Spec(R'_h)$, which implies that the fiber $\varphi^{-1}(P \cap
  R) = \Spec(S \otimes_R R'/P)$ is disconnected.  This is a
  contradiction because $\Spec(R'_h) \to \Spec(R)$ is a dominant
  morphism, so its image contains a dense open subset of $\Spec(R)$.
\end{proof}

We note that Lemma~\ref{L:connected} (and its proof) is valid for
varieties over any algebraically closed field.  The same argument also
shows that, if the general fiber of $\varphi$ is integral, then so is
the geometric generic fiber.

\begin{lemma} \label{L:push}
  Let $f : X \to Y$ be a surjective projective morphism of irreducible
  varieties of characteristic zero, with $Y$ normal.  Assume that
  $f^{-1}(y)$ is connected for all closed points $y$ in a dense open
  subset of $Y$.  Then $f_*(\cO_X) = \cO_Y$.
\end{lemma}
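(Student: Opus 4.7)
The plan is to exploit the Stein factorization of $f$ and combine connectivity of the general fiber with normality of $Y$. Concretely, factor $f : X \to Y$ as $f = g \circ h$, where $h : X \to Y'$ is a proper morphism with $h_*\cO_X = \cO_{Y'}$ and $g : Y' \to Y$ is a finite morphism; here $Y' = \Spec_Y(f_* \cO_X)$, which makes sense since $f_* \cO_X$ is a coherent $\cO_Y$-algebra (using that $f$ is projective, hence proper). The desired conclusion $f_* \cO_X = \cO_Y$ is then equivalent to $g$ being an isomorphism.

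Next I would argue that $g$ is birational. Since $f$ is surjective, so is $g$, and consequently the fibers of $g$ are nonempty. Over the dense open set $U \subset Y$ on which $f^{-1}(y)$ is connected, the fiber of $h$ over any point of $g^{-1}(U)$ is connected (by Stein's theorem on connected fibers, which holds because $h_*\cO_X = \cO_{Y'}$), while the fiber of $f$ over $y \in U$ is the disjoint union of the $h$-fibers over $g^{-1}(y)$. Connectedness of this disjoint union forces $g^{-1}(y)$ to be a single point for each $y \in U$; combined with $g$ being finite, this shows $g$ is birational onto $Y$.

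Finally, since $Y$ is normal and $g : Y' \to Y$ is a finite birational morphism, Zariski's Main Theorem (in the form: a finite birational morphism to a normal variety is an isomorphism) gives that $g$ is an isomorphism. Therefore $f_* \cO_X = g_* h_* \cO_X = g_* \cO_{Y'} = \cO_Y$, as claimed.

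The main obstacle, if any, is the set-up step of producing the Stein factorization in the stated generality: one must know that $f_* \cO_X$ is a coherent $\cO_Y$-algebra, which requires $f$ to be proper, and that the canonical map $X \to \Spec_Y(f_* \cO_X)$ has the two properties listed. These facts are standard (see Hartshorne, Corollary III.11.5 and the surrounding discussion) in characteristic zero once properness is in hand, so no genuine difficulty arises; the characteristic hypothesis is not actually needed for this lemma, but it will be used in the applications through Theorem~\ref{T:kollar}.
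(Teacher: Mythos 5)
Your proof is correct and follows essentially the same route as the paper: Stein factorization $f = g\circ h$ with $h_*\cO_X = \cO_{Y'}$ and $g$ finite, connectedness of the general fiber of $f$ forcing $g$ to be generically one-to-one and hence birational, and Zariski's Main Theorem (a finite birational morphism onto a normal variety is an isomorphism) to conclude $g$ is an isomorphism and $f_*\cO_X = \cO_Y$. One correction to your closing remark: the characteristic-zero hypothesis is genuinely needed, and it is used implicitly in your step ``$g$ finite with a single point in each general fiber, hence birational''---in characteristic $p$ a finite morphism can be bijective on points while the induced function-field extension is inseparable of degree $>1$, and indeed the Frobenius morphism $f:\P^1\to\P^1$ over $\overline{\mathbb{F}}_p$ satisfies every other hypothesis of the lemma yet has $f_*\cO_X \neq \cO_Y$.
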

\begin{proof}
  By the proof of \cite[III.11.5]{hartshorne:algebraic*1}, $f$ has a
  Stein factorization $f = g f'$, where $f' : X \to Y'$ is projective
  with $f'_* \cO_X = \cO_{Y'}$ and $g : Y' \to Y$ is finite.  Since
  the general fiber of $f$ is connected and the characteristic is
  zero, the map $g : Y' \to Y$ must be birational.  But then $g$ is an
  isomorphism by Zariski's Main Theorem.
\end{proof}

\begin{proof}[Proof of Theorem~\ref{T:push}]
  Let $\pi : \wt X \to X$ be an equivariant projective
  desingularization of $X$.  Since $X$ has rational singularities we
  know that $\pi_* \cO_{\wt X} = \cO_X$ and $\pi_*[\cO_{\wt X}] =
  [\cO_X]$, so it is enough to show that $\psi_*[\cO_{\wt X}] =
  [\cO_Y]$ where $\psi = f \pi$.  Lemma \ref{L:push} implies that $f_*
  \cO_X = \cO_Y$, so $\psi_* \cO_{\wt X} = \cO_Y$.  By Koll\'ar's
  Theorem~\ref{T:kollar}, it is therefore enough to prove that the
  geometric generic fiber $F = \wt X \times_Y \Spec \wb{\C(Y)}$ is
  connected and $H^i(F,\cO_F) = 0$ for $i>0$.
  
  By \cite[III.10.7]{hartshorne:algebraic*1} we can find a dense open
  subset $U \subset Y$ such that $\psi : \psi^{-1}(U) \to U$ is
  smooth.  Since the fibers of $\psi$ are connected by
  \cite[III.11.3]{hartshorne:algebraic*1}, it follows that
  $\psi^{-1}(y)$ is a non-singular rational projective variety for
  every closed point $y$ in a dense open subset of $Y$.  In
  particular, Lemma~\ref{L:connected} implies that $F$ is
  connected.
  
  To obtain the vanishing of cohomology, let $y \in Y$ be a closed
  point such that $\wt X_y = \psi^{-1}(y)$ is a non-singular rational
  projective variety.  Then we have $H^0(\wt X_y, \cO_{\wt X_y}) = \C$
  and $H^i(\wt X_y, \cO_{\wt X_y}) = 0$ for all $i > 0$
  \cite[p.~494]{griffiths.harris:principles}.  It now follows from
  \cite[III.12.11]{hartshorne:algebraic*1} that $R^i \psi_*(\cO_{\wt
    X})$ is zero in an open neighborhood of $y$, and that $H^i(\wt
  X_z, \cO_{\wt X_z}) = 0$ for all points $z$ in this neighborhood,
  for $i>0$.  Taking $z$ to be the generic point of $Y$, we obtain
  $H^i(F',\cO_{F'}) = 0$ for $i>0$, where $F' = \wt X \times_Y \Spec
  \C(Y)$.  Finally, since $\Spec \wb{\C(Y)} \to \Spec \C(Y)$ is a flat
  morphism, it follows from \cite[III.9.3]{hartshorne:algebraic*1}
  that $H^i(F,\cO_F) = 0$ for $i>0$, as required.
\end{proof}

We also need the following consequence of the projection formula.
Recall that $K^G(X)$ and $K_G(X)$ can be identified if $X$ is
non-singular.

\begin{lemma} \label{L:product}
  Let $q_i : X_i \to Y$ be flat proper equivariant maps of
  non-singular $G$-varieties for $1 \leq i \leq n$, and let $\alpha_i
  \in K^G(X_i)$.  Set $P = X_1 \times_Y \dots \times_Y X_n$ with
  diagonal $G$-action and projections $e_i : P \to X_i$, and set $\psi
  = q_i e_i : P \to Y$.  Then $\psi_*(e_1^*\alpha_1 \cdot
  e_2^*\alpha_2 \cdots e_n^* \alpha_n) = q_{1,*}\alpha_1 \cdot
  q_{2,*}\alpha_2 \cdots q_{n,*}\alpha_n \in K^G(Y)$.
\end{lemma}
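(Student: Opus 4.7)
My plan is to prove this by induction on $n$.  The case $n=1$ is vacuous.  For the base case $n=2$, form the cartesian square
\[
\xymatrix{
P \ar[r]^{e_1} \ar[d]_{e_2} & X_1 \ar[d]^{q_1} \\
X_2 \ar[r]^{q_2} & Y
}
\]
Since $q_1$ is flat and proper, equivariant flat base change yields the identity $e_{2,*} e_1^{*} = q_2^{*} q_{1,*}$ on $K$-theory.  Starting from $\psi_{*} = q_{2,*} e_{2,*}$ by functoriality, I would apply the projection formula for $e_2 : P \to X_2$ to peel off the factor $e_2^{*}\alpha_2$, substitute the base change identity to rewrite $e_{2,*} e_1^{*}\alpha_1$ as $q_2^{*} q_{1,*}\alpha_1$, and finally apply the projection formula for $q_2 : X_2 \to Y$ to peel off $q_{1,*}\alpha_1$.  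The result is $\psi_{*}(e_1^{*}\alpha_1 \cdot e_2^{*}\alpha_2) = q_{1,*}\alpha_1 \cdot q_{2,*}\alpha_2$.

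For the inductive step, set $P' = X_2 \times_Y \cdots \times_Y X_n$, with projections $e'_i : P' \to X_i$ for $i \geq 2$ and structural map $\psi' : P' \to Y$.  The map $\psi'$ is again flat and proper, being a composition and base change of such maps.  Then $P = X_1 \times_Y P'$, and the natural projection $p : P \to P'$ satisfies $e_i = e'_i \circ p$ for $i \geq 2$, so $e_2^{*}\alpha_2 \cdots e_n^{*}\alpha_n = p^{*}\beta$ with $\beta := (e'_2)^{*}\alpha_2 \cdots (e'_n)^{*}\alpha_n \in K^G(P')$.  Applying the $n=2$ case to the pair $q_1 : X_1 \to Y$ and $\psi' : P' \to Y$, with classes $\alpha_1$ and $\beta$, gives $\psi_{*}(e_1^{*}\alpha_1 \cdot p^{*}\beta) = q_{1,*}\alpha_1 \cdot \psi'_{*}\beta$.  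The inductive hypothesis identifies $\psi'_{*}\beta$ with $q_{2,*}\alpha_2 \cdots q_{n,*}\alpha_n$, completing the induction.

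The main obstacle is that the intermediate fiber product $P'$ need not be non-singular, so one must check that the $n=2$ argument in the inductive step does not require smoothness of the target of $q_2$ (the role now played by $P'$).  Inspecting the proof, only flatness and properness of $q_2$ and the condition $\alpha_2 \in K^G(X_2)$ are used, together with smoothness of $Y$ to identify $K_G(Y) = K^G(Y)$; all of these hold with $\psi'$ in place of $q_2$.  A secondary technical point is the equivariant version of flat base change, which follows from its classical counterpart together with the canonical equivariant structures on higher direct image sheaves described earlier in Section \ref{S:ktheory}.
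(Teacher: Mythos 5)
Your proposal is correct and takes essentially the same route as the paper's proof: an induction on $n$ built on a cartesian square over $Y$, the equivariant flat base change identity (the paper cites Hartshorne III.9.3 or Chriss--Ginzburg 5.3.15), and two applications of the projection formula, with smoothness of $Y$ used to regard $q_{1,*}\alpha_1$ as a class in $K^G(Y)$. The only difference is cosmetic: the paper peels off the last factor $X_n$ via the square $P = P' \times_Y X_n$ with $P' = X_1 \times_Y \dots \times_Y X_{n-1}$, whereas you peel off $X_1$ and isolate an explicit $n=2$ case; your observation that the possibly singular intermediate fiber product is harmless (only flatness and properness of the maps to $Y$ are used) is exactly what makes the paper's argument go through as well.
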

\begin{proof}
  Let $P' = X_1 \times_Y \dots \times_Y X_{n-1}$ with projections
  $e'_i : P' \to X_i$ and set $\psi' = q_i e'_i$.  Then we have a
  fiber square with flat, proper, $G$-equivariant maps:
  \[ \xymatrix{
     P \ar[r]^{e_n} \ar[d]_{q'_n} & X_n \ar[d]^{q_n} \\
     P' \ar[r]_{\psi'} & Y }
  \]
  It follows from \cite[III.9.3]{hartshorne:algebraic*1} or
  \cite[5.3.15]{chriss.ginzburg:representation} that $q'_{n,*}
  e_n^*\alpha_n = {\psi'}^* q_{n,*} \alpha_n \in K^G(P')$.  By
  induction on $n$ we therefore obtain
  \[\begin{split}
    \psi_*(e_1^*\alpha_1 \cdots e_n^*\alpha_n)
    &= \psi'_* q'_{n,*}( {q'}_n^*({e'}_1^* \alpha_1 \cdots {e'}_{n-1}^*
       \alpha_{n-1}) \cdot e_n^* \alpha_n) \\
    &= \psi'_*( {e'}_1^*\alpha_1 \cdots {e'}_{n-1}^*\alpha_{n-1} \cdot
        q'_{n,*} e_n^*\alpha_n) \\
    &= \psi'_*( {e'}_1^*\alpha_1 \cdots {e'}_{n-1}^*\alpha_{n-1} \cdot
       {\psi'}^* q_{n,*} \alpha_n) \\
    &= \psi'_*( {e'}_1^*\alpha_1 \cdots {e'}_{n-1}^*\alpha_{n-1}) \cdot
       q_{n,*} \alpha_n \\
    &= q_{1,*}\alpha_1 \cdots q_{n-1,*}\alpha_{n-1} \cdot q_{n,*}
       \alpha_n
  \end{split}\]
  as required.
\end{proof}

We finally need the following facts about the pushforward and pullback
of Schubert classes between homogeneous spaces.  Let $G$ be a complex
connected semisimple linear algebraic group, $T$ a maximal torus, and
$P$ and $Q$ Borel subgroups, such that $T \subset P \subset Q \subset
G$.  Let $f : G/Q \to G/P$ be the projection.

\begin{lemma} \label{L:pushschub}
  {\rm (a)} If $\Omega \subset G/Q$ is any $T$-stable Schubert
  variety, then $f_*([\cO_\Omega]) = [\cO_{f(\Omega)}] \in K_T(G/P)$.
  {\rm (b)} If $\Omega \subset G/P$ is any $T$-stable Schubert
  variety, then $f^*([\cO_\Omega]) = [\cO_{f^{-1}(\Omega)}] \in
  K^T(G/P)$.
\end{lemma}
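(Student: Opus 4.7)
The plan for both parts is to exploit that the projection $f$ is equivariant and smooth, and that images and preimages of Schubert varieties are again Schubert varieties. The main tool for (a) is Theorem~\ref{T:push}; for (b), flatness suffices.

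For (a), I would apply Theorem~\ref{T:push} to the restricted map $f|_\Omega : \Omega \to f(\Omega)$. Four hypotheses must be checked. First, both $\Omega$ and $f(\Omega)$ are projective Schubert varieties (the image of a $B$-orbit closure under an equivariant map is a $B$-orbit closure), and Schubert varieties in a partial flag manifold have rational singularities by a classical theorem (Mehta--van der Kallen, Ramanathan; see Brion--Kumar). Second, $f|_\Omega$ is obviously $T$-equivariant and surjective onto $f(\Omega)$. The third input --- that the general fiber of $f|_\Omega$ is irreducible and rational --- is the only step requiring real work. Since $f$ is a locally trivial bundle whose fibers are themselves partial flag varieties, I would use the Bruhat decomposition: writing $\Omega$ as the closure of a Schubert cell indexed by a minimal coset representative $w$, and factoring $w$ with respect to the two Weyl subgroups $W_P$ and $W_Q$, one identifies the restriction of $f$ over the open Schubert cell of $f(\Omega)$ as a trivial bundle whose fiber is a Schubert cell in the fiber of $f$. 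Taking closures, the generic fiber of $f|_\Omega$ is a Schubert variety in this fiber flag manifold, which is rational (it admits a Bott--Samelson desingularization by an iterated $\bP^1$-bundle, and its image in the fiber is rational). Theorem~\ref{T:push} then yields $f_*[\cO_\Omega] = [\cO_{f(\Omega)}]$.

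For (b), the map $f$ is smooth --- it is a homogeneous fibration between smooth homogeneous spaces --- and in particular flat. Flatness ensures that $\mathcal{T}\mkern-2mu or^{\cO}_i(\cO_\Omega, \cO_{G/Q}) = 0$ for $i > 0$, so the ordinary pullback computes $f^*[\cO_\Omega] = [f^{-1}\cO_\Omega]$. Because $f$ is $B$-equivariant with irreducible fibers, the preimage $f^{-1}(\Omega)$ is closed, irreducible, and $B$-stable, hence itself a Schubert variety, and $f^{-1}\cO_\Omega = \cO_{f^{-1}(\Omega)}$.

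The main obstacle is establishing the third hypothesis in part (a), namely rationality of the generic fiber of $f|_\Omega$; the rest reduces to citation of standard facts (rational singularities of Schubert varieties, flatness of homogeneous projections). The crucial observation is that fibers of $f$ between partial flag manifolds are themselves partial flag manifolds, so Schubert varieties cut out Schubert varieties --- and thus rational varieties --- on general fibers.
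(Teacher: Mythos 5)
Your part (b) is exactly the paper's argument (flatness), but for part (a) you take a genuinely different route: the paper simply quotes \cite[Thm.~3.3.4(a)]{brion.kumar:frobenius}, whereas you rederive the statement from the paper's own Theorem~\ref{T:push} applied to $f|_\Omega : \Omega \to f(\Omega)$, using rational singularities of Schubert varieties and rationality of the general fiber. This is a legitimate and self-contained alternative (and it automatically gives the $T$-equivariant statement, since Theorem~\ref{T:push} is equivariant and $\Omega$, $f(\Omega)$ are $T$-stable); what the citation buys the paper is that Brion--Kumar prove the stronger sheaf-level statement $R^if_*\cO_\Omega = 0$ ($i>0$), $f_*\cO_\Omega = \cO_{f(\Omega)}$ in all characteristics via Frobenius splitting, with no fiber analysis needed.

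The one place where your argument is thinner than you acknowledge is the phrase ``taking closures'' in the generic-fiber step. What your cell computation gives directly is that the fiber of the \emph{open cell} of $\Omega$ over a point of the open cell of $f(\Omega)$ is a Schubert cell of the fiber flag variety, hence that the \emph{closure of that cell-fiber} is a translated Schubert variety; but Theorem~\ref{T:push} needs the fiber of the \emph{closure} $\Omega$ to be irreducible, and a priori $f^{-1}(y)\cap\Omega$ could contain extra components coming from the smaller cells $C_v\subset\Omega$ that also dominate the open cell of $f(\Omega)$. Writing $w=w^Pw_P$ with $w^P$ the minimal coset representative, the needed statement is that every $v\le w$ with the same minimal representative $w^P$ satisfies $v_P\le w_P$, so that all such cell-fibers lie in the closure of the big one; this is a standard compatibility of Bruhat order with the parabolic decomposition (for fixed $x\in W^P$ and $u,v\in W_P$ one has $xu\le xv$ if and only if $u\le v$), and with it the generic fiber is exactly a translated Schubert variety in the fiber $\cong P/Q$, hence irreducible and rational, and your appeal to Theorem~\ref{T:push} goes through. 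So the proposal is correct once this point is either proved or cited (it can also be found in the literature on fibers of projections of Schubert varieties, or extracted from \cite{brion:lectures}); as written, it is the only real gap.
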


Part (a) of this lemma is known from
\cite[Thm.3.3.4(a)]{brion.kumar:frobenius}, and part (b) is true
because $f$ is a flat morphism.


\section{Gromov-Witten invariants} \label{S:gwinv}

\subsection{Definitions}

We start our discussion of Gromov-Witten invariants by recalling the
definitions and some general facts.  Let $X = G/P$ be a homogeneous
space and let $d \in H_2(X)$ be a degree.  Given $K$-theory classes
$\alpha_1, \dots, \alpha_N \in K^\circ(X)$, Lee and Givental define a
$K$-theoretic Gromov-Witten invariant by \cite{lee:quantum,
  givental:on}
\begin{equation} 
  I_d(\alpha_1,\dots,\alpha_N) \ = \ 
  \chi(\ev_1^*(\alpha_1) \cdots \ev_N^*(\alpha_N)) \ \in \Z \,,
\end{equation}
where $\chi$ denotes Euler characteristic, i.e.\ proper pushforward
along the structure morphism $\rho : \Mb_{0,N}(X,d) \to \{\pt\}$.  If
the classes $\alpha_i$ are structure sheaves of closed subvarieties
$\Omega_1,\dots,\Omega_N \subset X$ in general position such that
$\sum \codim(\Omega_i) = \dim \Mb_{0,N}(X,d)$, then the invariant
$I_d(\cO_{\Omega_1}, \dots, \cO_{\Omega_d})$ is equal to the
cohomological invariant
\[ I_d([\Omega_1], \dots, [\Omega_d]) = \int_{\Mb_{0,N}(X,d)}
   \ev_1^*[\Omega_1] \cdots \ev_N^*[\Omega_N] \,,
\]
which in turn is equal to the number $\#
GW_d(\Omega_1,\dots,\Omega_N)$ of stable maps $C \to X$ of degree $d$
for which the $i$-th marked point of $C$ is mapped into $\Omega_i$
(see \cite[Lemma~14]{fulton.pandharipande:notes}).  In general, the
$K$-theoretic invariant $I_d(\cO_{\Omega_1}, \dots, \cO_{\Omega_N})$
is equal to the Euler characteristic of the structure sheaf of the
Gromov-Witten variety $GW_d(\Omega_1,\dots,\Omega_N)$, which is the
definition of $K$-theoretic Gromov-Witten invariants used in
\cite{lee.pandharipande:reconstruction}.  This can be seen by applying
the $K$-theoretic (relative) Kleiman-Bertini theorem of Sierra
\cite[Thm.~2.2]{sierra:general} (see also
\cite{miller.speyer:kleiman-bertini} for the non-relative case).  Let
$G^N = G \times G \times \dots \times G$ act componentwise on $X^N$
and consider the diagram
\[\xymatrix{
  \Mb = \Mb_{0,N}(X,d) \ar[r]^{\ \ \ \ \ \ \ \ev} & X^N \\
  & G^N \times W \ar[u]^{f} \ar[r]^{\ \ \pi} & G^N}
\]
where $W = \Omega_1 \times \dots \times \Omega_N \subset X^N$ and $f$
is defined by the action.  If we let $G^N$ act on the first factor of
$G^N \times W$, then the map $f$ is equivariant, and therefore flat
since the action on $X^N$ is transitive.  Sierra's theorem implies
that $\text{\sl Tor}_i^{X^N}(\cO_{\Mb}, \cO_{g.W}) = 0$ for some $g
\in G^N$ and all $i>0$.  Replacing $W$ with $g.W$, we obtain (cf.\ 
\cite[Ex.~15.1.8]{fulton:intersection})
\[ {\textstyle \prod_{i=1}^N} \ev_i^*[\cO_{\Omega_i}] = \ev^*[\cO_W] = 
   [\cO_{\Mb} \otimes_{\cO_{X^N}} \cO_W] = 
   [\cO_{GW_d(\Omega_1,\dots,\Omega_N)}]
\]
in $K_\circ(\Mb)$, which finally implies that
$I_d(\cO_{\Omega_1},\dots,\cO_{\Omega_N}) =
\chi(\cO_{GW_d(\Omega_1,\dots,\Omega_n)})$.  Notice that in contrast
to cohomological Gromov-Witten invariants, the $K$-theoretic
invariants need not vanish for large degrees $d \in H_2(X)$.

More generally, fix a maximal torus $T \subset G$.  Then $T$ acts on
$X$ and $\Mb_{0,N}(X,d)$, and the evaluation maps are equivariant.
The {\em equivariant $K$-theoretic Gromov-Witten invariant\/} given by
classes $\alpha_1, \dots, \alpha_N \in K^T(X)$ is defined as the
virtual representation
\begin{equation}
  I_d^T(\alpha_1,\dots,\alpha_N) \ = \ \euler{\Mb}^T(\ev_1^*(\alpha_1) 
  \cdots \ev_N^*(\alpha_N)) \ \in K^T(\pt) \,,
\end{equation}
where $\euler{\Mb}^T$ is the equivariant pushforward along $\rho$.  If
we write $T = (\C^*)^n$, then the virtual representations of $T$ form
the Laurent polynomial ring $K^T(\pt) = \Z[L_1^{\pm 1}, \dots,
L_n^{\pm 1}]$, where $L_i \cong \C$ is the representation with
character $(t_1,\dots,t_n) \mapsto t_i$.  This ring is contained in
the ring of formal power series $\Z\llbracket y_1,\dots,y_n
\rrbracket$ in the variables $y_i = 1 - L_i^{-1} \in K^T(\pt)$.  If we
misuse notation and write $y_i$ also for the equivariant Chern class
of $L_i$, then the $T$-equivariant cohomology ring of a point is the
ring $H^*_T(\pt) = \Z[y_1, \dots, y_n]$.

The study of equivariant Gromov-Witten invariants was pioneered by
Givental \cite{givental:equivariant} (see also
\cite{givental.kim:quantum}), who defined the cohomological invariants
\begin{equation*}
  I_d^T(\beta_1,\dots,\beta_N)
  \ = \ 
  \int^T_{\Mb_{0,3}(X,d)} \ev_1^*(\beta_1) \cdots \ev_N^*(\beta_N) 
  \ := \ 
  \rho_*^T(\ev_1^*(\beta_1) \cdots \ev_N^*(\beta_N))
\end{equation*}
in $H^*_T(\pt)$ for equivariant cohomology classes
$\beta_1,\dots,\beta_N \in H^*_T(X)$.  These invariants can be
obtained as the leading terms of $K$-theoretic invariants as follows.
For any $T$-variety $Y$ we let $\ch_T : K^T(Y) \to \wh H_T(Y) :=
\prod_{i=0}^\infty H^{2i}_T(Y;\Q)$ be the equivariant Chern character,
see \cite[Def.~3.1]{edidin.graham:riemann-roch}.  By the equivariant
Hirzebruch formula \cite[Cor.~3.1]{edidin.graham:riemann-roch} we then
have
\begin{equation} \label{E:ekgw_lowest}
  \ch_T\left( I_d^T(\alpha_1,\dots,\alpha_N) \right) =
  \rho_*^T\left( \ev_1^*(\ch_T(\alpha_1)) \cdots 
  \ev_N^*(\ch_T(\alpha_N)) \cdot \Td^T(\Mb) \right)
\end{equation}
where $\Td^T(\Mb) \in \wh H_T(\Mb)$ is the equivariant Todd class of
the (singular) variety $\Mb_{0,N}(X,d)$.  Let $\Omega_1,\dots,\Omega_N
\subset X$ be $T$-stable closed subvarieties.  By using that
$\ch_T(y_i) = 1 - \exp(-y_i) = y_i + \text{higher terms} \in \wh
H_T(\pt)$ and $\ch_T(\cO_{\Omega_i}) = [\Omega_i] + $higher terms in
$\wh H_T(X)$ \cite[Thm.~18.3]{fulton:intersection}, we deduce from
(\ref{E:ekgw_lowest}) that the term of (lowest) degree $\sum
\codim(\Omega_i) - \dim \Mb_{0,N}(X,d)$ in the $K$-theoretic invariant
$I_d^T(\cO_{\Omega_1},\dots,\cO_{\Omega_N}) \in K^T(\pt) \subset
\Z\llbracket y_1,\dots,y_n \rrbracket$ is equal to the cohomological
invariant $I_d^T([\Omega_1],\dots,[\Omega_N])$.

We note that when $\sum \codim(\Omega_i) = \dim \Mb_{0,N}(X,d)$ we have
\[ I_d(\cO_{\Omega_1},\dots,\cO_{\Omega_N}) =
   I_d([\Omega_1],\dots,[\Omega_N]) = I_d^T([\Omega_1],\dots,[\Omega_N])
   \ \in \Z \,,
\]
but the equivariant $K$-theoretic invariant $I_d^T(\cO_{\Omega_1},
\dots, \cO_{\Omega_N}) \in K^T(\pt)$ may not be an integer.  In
general, the ordinary $K$-theoretic invariant
$I_d(\cO_{\Omega_1},\dots,\cO_{\Omega_N}) \in \Z$ is the total
dimension of the virtual representation $I_d^T(\cO_{\Omega_1}, \dots,
\cO_{\Omega_N}) \in K^T(\pt)$.

\begin{example}
  Let $X = \Gr(3,6)$ be the Grassmannian of 3-planes in $\C^6$, and
  let $T = (\C^*)^6$ act on $X$ through the coordinatewise action on
  $\C^6$.  Let $\Omega \subset X$ be the Schubert variety defined by
  the partition $\lambda = (2,1)$, i.e.\ $\Omega = \{ V \in X \mid V
  \cap \C^2 \neq 0 \text{ and } \dim(V \cap \C^4) \geq 2 \}$.  Then
  $I_0([\Omega],[\Omega],[\Omega]) = I^T_0([\Omega],[\Omega],[\Omega])
  = I_0(\cO_\Omega, \cO_\Omega, \cO_\Omega) = 2$, whereas
  $I_0^T(\cO_\Omega, \cO_\Omega, \cO_\Omega) = 1 + [\C_\chi]$ where
  the character $\chi$ is defined by $\chi(t_1,t_2,t_3,t_4,t_5,t_6) =
  (\frac{t_1 t_2}{t_5 t_6})^3$.
\end{example}

The non-equivariant cohomological (3-point) invariants of a
(generalized) flag manifold have been computed by exploiting the fact
that they are the structure constants for the quantum cohomology. We
will not attempt to survey the subject, but the reader can consult
\cite{bertram:quantum, ciocan-fontanine:on, fomin.gelfand.ea:quantum,
  fulton.woodward:on, woodward:on} and references therein.  The
equivariant cohomological invariants, which appear as structure
constants in equivariant quantum cohomology, have been computed in
\cite{mihalcea:equivariant*1, mihalcea:equivariant}, and more recent
algorithms can also be found in \cite{lam.shimozono:quantum}.

If the cohomology of the variety $X$ is generated by divisors (such as
$\P^r$ or a full flag manifold), then Lee and Pandharipande's
reconstruction theorem from \cite{lee.pandharipande:reconstruction}
can be used to compute the $N$-pointed $K$-theoretic invariants
starting from the 1-pointed invariants.  For projective spaces a
formula is known for the $J$-function, which encodes all the
$1$-pointed invariants \cite[\S
2.2]{lee.pandharipande:reconstruction}.  This yields a complete
algorithm to compute the K-theoretic invariants in this case.

We will proceed to express the (equivariant) $K$-theoretic
Gromov-Witten invariants of Grassmannians as triple intersections on
two-step flag manifolds, thus generalizing the identity proved in
\cite{buch.kresch.ea:gromov-witten}.

\subsection{Grassmannians of type A}\label{ssection:qkgrass}

Let $X = \Gr(m,n) = \{ V \subset \C^n : \dim V = m \}$ be the
Grassmann variety of $m$-planes in $\C^n$.  This variety has dimension
$mk$, where $k = n-m$; the dimension of the associated Kontsevich
moduli space $M_d := \Mb_{0,3}(X,d)$ is equal to $\dim X + nd$.
Following \cite{buch:quantum}, we define the {\em kernel\/} of a
3-pointed stable map $f : C \to X$ to be the intersection of the
$m$-planes $V \subset \C^n$ in its image, and we define the {\em
  span\/} of $f$ as the linear span of these subspaces.
\begin{equation}
  \Ker(f) = \bigcap_{V \in f(C)} V \ \ \ ; \ \ \ 
  \Span(f) = \sum_{V \in f(C)} V \ \subset \C^n \,.
\end{equation}
Given a degree $d \geq 0$ we set $a = \max(m-d,0)$ and $b =
\min(m+d,n)$.  If $f : C \to X$ is a stable map of degree $d$, then
its kernel and span satisfy the dimension bounds
\begin{equation} \label{E:dimbound}
  \dim \Ker(f) \geq a \ \ \text{ and } \ 
  \dim \Span(f) \leq b \,.
\end{equation}
This was proved in \cite[Lemma~1]{buch:quantum} when $C = \P^1$, and
in general it follows from this case by induction on the number of
components of $C$.  We will prove in Corollary~\ref{C:semicont} below
that $f \mapsto \dim \Ker(f)$ is an upper semicontinuous function on
$M_d$, while $f \mapsto \dim \Span(f)$ is lower semicontinuous.  Since
it is easy to construct stable maps $f : \bP^1 \to X$ for which the
bounds (\ref{E:dimbound}) are satisfied with equality (see
\cite[Prop.~1]{buch.kresch.ea:gromov-witten}), it follows that
(\ref{E:dimbound}) is satisfied with equality for all stable maps in a
dense open subset of $M_d$.

Define the two-step flag variety $Y_d = \Fl(a,b;n) = \{ A\subset B
\subset \C^n : \dim A = a, \dim B=b\}$ and the three-step flag variety
$Z_d = \Fl(a,m,b;n) = \{ A\subset V \subset B \subset \C^n : \dim A =
a, \dim V=m, \dim B=b\}$.  Let $p : Z_d \to X$ and $q : Z_d \to Y_d$
be the natural projections.  Our main result for Grassmannians of type
A is the following theorem.

\begin{thm} \label{T:mainA}
  For equivariant $K$-theory classes $\alpha_1, \alpha_2, \alpha_3 \in
  K^T(X)$ we have
\[ I^T_d(\alpha_1, \alpha_2, \alpha_3) = 
   \euler{Y_d}^T( q_*p^*(\alpha_1) \cdot q_*p^*(\alpha_2) 
   \cdot q_*p^*(\alpha_3) ) \,.
\]
\end{thm}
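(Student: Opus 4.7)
Set $P = Z_d \times_{Y_d} Z_d \times_{Y_d} Z_d$. The plan is to construct a $T$-equivariant variety $\Bl_d$ fitting into a commutative diagram
\[ \Mb := \Mb_{0,3}(X,d) \ \xleftarrow{\pi}\ \Bl_d\ \xrightarrow{\psi}\ P, \]
in which $\pi$ and $\psi$ are proper surjective $T$-morphisms satisfying the hypotheses of Theorem~\ref{T:push}, and such that the $i$-th evaluation on $\Mb$ factors as $\ev_i \circ \pi = p \circ e_i \circ \psi$, where $e_i : P \to Z_d$ denotes the $i$-th projection. Granted this, the theorem follows by a formal manipulation: the projection formula applied to $\pi_*[\cO_{\Bl_d}] = [\cO_\Mb]$, then to $\psi_*[\cO_{\Bl_d}] = [\cO_P]$, transforms $\euler{\Mb}^T\bigl(\prod_i \ev_i^*\alpha_i\bigr)$ first into $\euler{\Bl_d}^T\bigl(\prod_i \psi^* e_i^* p^* \alpha_i\bigr)$ and then into $\euler{P}^T\bigl(\prod_i e_i^* p^* \alpha_i\bigr)$; and finally Lemma~\ref{L:product} applied to the three equal flat projections $q : Z_d \to Y_d$ produces $\euler{Y_d}^T\bigl(\prod_i q_* p^* \alpha_i\bigr)$, which is the right-hand side.

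\textbf{Construction of $\Bl_d$.} Let $a = \max(m-d,0)$ and $b = \min(m+d,n)$. By Corollary~\ref{C:semicont} (semicontinuity of $\dim \Ker$ and $-\dim \Span$) combined with the irreducibility of $\Mb$, the locus $U \subset \Mb$ on which both bounds in \eqref{E:dimbound} are equalities is open and dense. On $U$ the assignment $f \mapsto (\Ker(f), \Span(f))$ defines a $T$-equivariant morphism to $Y_d$. Let $\Bl_d$ denote the closure in $\Mb \times Y_d$ of the graph of this morphism, and let $\pi$ be the first projection. Any point $(f, (A,B)) \in \Bl_d$ satisfies $A \subset \Ker(f)$ and $\Span(f) \subset B$ by semicontinuity, so $A \subset f(p_i) \subset B$ defines a three-step flag in $Z_d$ at each marked point; sending $(f, (A,B))$ to the triple $\bigl((A, f(p_i), B)\bigr)_{i=1,2,3}$ defines $\psi: \Bl_d \to P$, and the desired factorization $\ev_i \circ \pi = p \circ e_i \circ \psi$ is immediate.

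\textbf{Verifying the hypotheses of Theorem~\ref{T:push}.} The moduli space $\Mb$ has at worst finite quotient singularities and hence rational singularities; the map $\pi$ is birational (an isomorphism over $U$), so general fibers are reduced points. Thus Theorem~\ref{T:push} gives $\pi_*[\cO_{\Bl_d}] = [\cO_\Mb]$. The target $P$ is smooth, being the triple fiber product over the smooth Grassmann bundle $q : Z_d \to Y_d$. The crucial point is that a general fiber of $\psi$ is irreducible and rational: over a general $(A \subset V_i \subset B)_{i=1,2,3} \in P$, this fiber parametrizes degree $d$ stable maps through $V_1,V_2,V_3$ with kernel containing $A$ and span contained in $B$. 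Every such map factors through $\Gr(m-a, B/A) \cong \Gr(m', n')$ with $m' = \min(m,d)$ and $n' - m' = \min(n-m,d)$, and so the fiber identifies with the Gromov-Witten variety $GW_d(V_1/A, V_2/A, V_3/A)$ in this smaller Grassmannian. Since $d \geq \max(m', n'-m')$, Corollary~\ref{C:ratA} supplies both nonemptiness and rationality. Combined with a verification that $\Bl_d$ itself has rational singularities (via an equivariant resolution of the closure), Theorem~\ref{T:push} then yields $\psi_*[\cO_{\Bl_d}] = [\cO_P]$, completing the plan.

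\textbf{Main obstacle.} The hardest step is the rationality of the general fibers of $\psi$. For small $d$ (when $a > 0$ or $b < n$) this rationality is essentially the classical BKT identification \cite{buch.kresch.ea:gromov-witten} of degree $d$ rational curves through three points in $\Gr(m', n')$; for large $d$ (when $Y_d$ is a point, $\Bl_d = \Mb$, and $\psi$ becomes the total evaluation $\ev$), it is precisely Corollary~\ref{C:ratA}. Establishing that $\Bl_d$ has rational singularities uniformly across these regimes---especially on the boundary where kernel and span dimensions jump---is the other delicate technical point. Once these two items are settled, the diagrammatic calculation outlined in the first paragraph delivers Theorem~\ref{T:mainA}.
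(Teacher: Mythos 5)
Your overall route is the one the paper takes: the same correspondence $\Mb_{0,3}(X,d) \xleftarrow{\pi} \Bl_d \xrightarrow{\phi} Z_d \times_{Y_d} Z_d \times_{Y_d} Z_d$, pushforward of structure sheaves via Theorem~\ref{T:push}, rationality of the fibers of $\phi$ via Corollary~\ref{C:ratA} applied inside $\Gr(m-a,B/A)$, and then the projection formula together with Lemma~\ref{L:product}. Your uniform verification that $d \geq \max(m',n'-m')$ for the smaller Grassmannian, so that Corollary~\ref{C:ratA} covers all degrees at once, is correct and matches what the paper does for arbitrary $d$.

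There is, however, one genuine gap, and you flag it yourself without closing it: both applications of Theorem~\ref{T:push} (for $\pi$ as well as for $\psi$) require the \emph{source} $\Bl_d$ to have rational singularities, and your proposed construction of $\Bl_d$ as the closure of the graph of $f \mapsto (\Ker(f),\Span(f))$ gives you no control over its singularities. The remark that this would be settled ``via an equivariant resolution of the closure'' does not do anything: every variety admits an equivariant resolution, and rational singularities is precisely the vanishing statement $R^i\pi_*\cO = 0$ that has to be proved, not assumed. (Even the step $\pi_*[\cO_{\Bl_d}] = [\cO_{\Mb}]$ already needs this.) You would also need to check that $f \mapsto (\Ker(f),\Span(f))$ is an actual morphism on the open locus $U$, which your graph construction presupposes. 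The paper's way around both issues is to define $\Bl_d$ not as a graph closure but as the incidence variety $\{(A,B,f) \in Y_d \times M_d \mid A \subset \Ker(f),\ \Span(f) \subset B\}$: over a trivializing open set of $Y_d$ this is literally a product $U \times \Mb_{0,3}(\Gr(m-a,B_0/A_0),d)$, so it is closed, irreducible, projective, and has at worst finite quotient singularities (hence rational singularities), and the dimension bounds \eqref{E:dimbound} then show $\pi$ is birational, which identifies it a posteriori with your graph closure. With that structural lemma in place of your closure construction, the rest of your argument goes through as written.
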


This generalizes \cite[Thm.~1]{buch.kresch.ea:gromov-witten} which
gives this identity for non-equivariant cohomological Gromov-Witten
invariants.  We note that the cohomological invariants vanish for
degrees $d$ larger than $\min(m,k)$, but this is not true for the
$K$-theoretic invariants.  The definition of $a$ and $b$ given above
is required to correctly compute the $K$-theoretic invariants for such
degrees.

Let $\Omega \subset X$ be a Schubert variety.  Like in \cite[\S
2]{buch.kresch.ea:gromov-witten} we define a modified Schubert variety
in $Y_d$ by
\[ \wt \Omega = q(p^{-1}(\Omega)) = 
   \{ (A,B) \in Y_d \mid \exists V \in \Omega : A \subset V \subset B \} \,.
\]
We then have $q_*p^*([\cO_\Omega]) = [\cO_{\wt \Omega}] \in
K_\circ(Y_d)$ by Lemma~\ref{L:pushschub}, and the cohomology class
$q_*p^*([\Omega])$ is equal to $[\wt \Omega] \in H^*(Y_d)$ if
$\codim(\wt \Omega) = \codim(\Omega) - d^2$ and is zero otherwise.  If
$\Omega$ is $T$-stable, then the same identities hold in equivariant
$K$-theory and cohomology.  We remark that the codimension of
$\wt\Omega$ equals $\codim(\Omega)-d^2$ if and only if the Young
diagram defining $\Omega$ contains a $d \times d$ rectangle.  In
particular we must have $d \leq \min(m,k)$.  The role of the $d \times
d$ rectangle was first discovered in \cite{yong:degree}, and the
geometric interpretation was given in
\cite{buch.kresch.ea:gromov-witten}.  We derive the following
corollary.

\begin{cor} \label{C:twostep}
  Let $\Omega_1, \Omega_2, \Omega_3 \subset X$ be Schubert varieties.
  Then we have
  \[ I_d(\cO_{\Omega_1}, \cO_{\Omega_2}, \cO_{\Omega_3}) =
  \euler{Y_d}([\cO_{\wt \Omega_1}] \cdot [\cO_{\wt \Omega_2}] \cdot
  [\cO_{\wt \Omega_3}])
  \] 
  and, if $d \leq \min(m,k)$, then
  \[ I_d([\Omega_1], [\Omega_2], [\Omega_3]) =
  \begin{cases}
    \int_{Y_d} [\wt\Omega_1] \cdot [\wt\Omega_2] \cdot [\wt\Omega_3] 
    & \text{if $\codim(\wt\Omega_i) = 
      \codim(\Omega_i)-d^2$ $\forall$ $i$,} \\
    0 & \text{otherwise.}
  \end{cases}
  \]
  If the $\Omega_1, \Omega_2, \Omega_3$ are $T$-stable, then these
  identities hold equivariantly as well.
\end{cor}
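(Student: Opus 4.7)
The plan is to deduce Corollary~\ref{C:twostep} directly from Theorem~\ref{T:mainA} by computing $q_*p^*[\cO_{\Omega_i}]$ via Lemma~\ref{L:pushschub}. First, observe that $X$, $Y_d$, and $Z_d$ are all partial flag varieties $G/P$ for $G=\GL_n$, with parabolic subgroups satisfying the containments required for $p: Z_d \to X$ and $q: Z_d \to Y_d$ to be projections of the form studied in Lemma~\ref{L:pushschub}. Moreover, if $\Omega \subset X$ is a $T$-stable Schubert variety, then $p^{-1}(\Omega) \subset Z_d$ is a $T$-stable Schubert variety (it is the closure of a $B$-orbit since $p$ is $B$-equivariant).

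For the $K$-theoretic identity, apply Lemma~\ref{L:pushschub}(b) to get $p^*[\cO_{\Omega_i}] = [\cO_{p^{-1}(\Omega_i)}]$, then Lemma~\ref{L:pushschub}(a) to get $q_*[\cO_{p^{-1}(\Omega_i)}] = [\cO_{q(p^{-1}(\Omega_i))}] = [\cO_{\wt\Omega_i}]$. Plugging into Theorem~\ref{T:mainA} yields the claimed formula, and the same computation works verbatim in equivariant $K$-theory since Lemma~\ref{L:pushschub} is stated $T$-equivariantly and Theorem~\ref{T:mainA} holds in $K^T$.

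For the cohomological identity, I would take the degree $\sum \codim(\Omega_i) - \dim \Mb_{0,3}(X,d)$ term on both sides of the $K$-theoretic identity. By the discussion around \eqref{E:ekgw_lowest}, this recovers on the left the cohomological Gromov-Witten invariant $I_d([\Omega_1],[\Omega_2],[\Omega_3])$. On the right, an analogous Hirzebruch/leading-term computation reduces $q_*p^*[\cO_{\Omega_i}] = [\cO_{\wt\Omega_i}]$ to the cohomological identity $q_*p^*[\Omega_i] = [\wt\Omega_i]$ when the codimension is preserved and $0$ otherwise. The key dimension count is that when $d \leq \min(m,k)$ one has $a=m-d$, $b=m+d$, so the fibers of $q$ are isomorphic to $\Gr(d,2d)$ of dimension $d^2$; the restriction $q|_{p^{-1}(\Omega_i)}: p^{-1}(\Omega_i) \to \wt\Omega_i$ is therefore generically finite (in fact birational, since a general point of $\wt\Omega_i$ corresponds to a unique $V \in \Omega_i$ with $A \subset V \subset B$) precisely when $\codim(\wt\Omega_i) = \codim(\Omega_i)-d^2$, and has positive-dimensional generic fiber otherwise, forcing the pushforward to vanish. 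Again this argument runs in equivariant cohomology without change.

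The only potentially subtle point is verifying that $\wt\Omega_i = q(p^{-1}(\Omega_i))$ is itself a Schubert variety of $Y_d$ (so that Lemma~\ref{L:pushschub}(a) applies): this holds because $p^{-1}(\Omega_i)$ is a $B$-stable closed irreducible subvariety of $Z_d$ and $q$ is $B$-equivariant and proper, so its image is a $B$-orbit closure in $Y_d$. Once this is in place, the remainder of the argument is purely formal.
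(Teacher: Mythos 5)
Your proof of the first ($K$-theoretic) identity is exactly the paper's: check that $p^{-1}(\Omega_i)\subset Z_d$ and $\wt\Omega_i=q(p^{-1}(\Omega_i))\subset Y_d$ are Schubert varieties, apply Lemma~\ref{L:pushschub} to get $q_*p^*[\cO_{\Omega_i}]=[\cO_{\wt\Omega_i}]$, and substitute into Theorem~\ref{T:mainA}; the equivariant case is verbatim. For the cohomological identity your route differs mildly from the paper's. The paper simply records, in the paragraph before the corollary, that in cohomology $q_*p^*([\Omega_i])$ equals $[\wt\Omega_i]$ when $\codim(\wt\Omega_i)=\codim(\Omega_i)-d^2$ and vanishes otherwise (the generic-finiteness/birationality dichotomy you describe, imported from the two-step construction of Buch--Kresch--Tamvakis), and then invokes the cohomological form of the main identity. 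You instead extract the term of degree $\sum\codim(\Omega_i)-\dim\Mb_{0,3}(X,d)$ from the already-established $K$-theoretic identity via the equivariant Hirzebruch formula (\ref{E:ekgw_lowest}); this is a valid and rather self-contained alternative, and it closes because of your fiber computation ($q$ has fibers $\Gr(d,2d)$ of dimension $d^2$, whence $\dim Y_d=\dim\Mb_{0,3}(X,d)-3d^2$), so the lowest degree appearing on the right is $\sum\codim(\wt\Omega_i)-\dim Y_d$, which equals the target degree exactly when all codimensions drop by $d^2$ and exceeds it otherwise. Two small points to tighten: the degree-extraction is literally meaningful only equivariantly, where the invariant sits in the graded ring $\Z\llbracket y_1,\dots,y_n\rrbracket$, so run it in $K^T$/$H^*_T$ as you indicate and then specialize (or note both non-equivariant sides vanish unless $\sum\codim(\Omega_i)=\dim\Mb_{0,3}(X,d)$, in which case equivariant and ordinary numbers agree, as the paper remarks); and ``generically finite'' only gives $q_*p^*[\Omega_i]=\deg\cdot[\wt\Omega_i]$, so your parenthetical degree-one claim needs the standard cell argument (a $B$-equivariant surjection of Schubert cells with finite fibers is bijective on cells) or can be read off from the $K$-theoretic pushforward identity by Riemann--Roch. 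With those clarifications the argument is complete.
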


The second identity was proved in \cite{buch.kresch.ea:gromov-witten}
by putting the Schubert varieties in general position, and showing
that the map $f \mapsto (\Ker(f),\Span(f))$ gives a bijection between
the set of rational curves counted by the invariant $I_d([\Omega_1],
[\Omega_2], [\Omega_3])$ and the set of points in the intersection
$\wt \Omega_1 \cap \wt \Omega_2 \cap \wt \Omega_3$.  This approach
will not suffice for the general case, for example because the
equivariant and $K$-theoretic invariants do not have an enumerative
interpretation.  Instead, we will give a cohomological proof of
Theorem \ref{T:mainA}.  We need some notation.  For {\em arbitrary\/}
integers $a, b$ with $0 \leq a \leq m \leq b \leq n$ we define the
subset
\[ M_d(a,b) = \{ (A,B,f) \in \Fl(a,b;n) \times M_d \mid
   A \subset \Ker(f) \text{ and } \Span(f) \subset B \} \,.
\]

\begin{lemma}
  $M_d(a,b)$ is an irreducible closed subset of $\Fl(a,b;n) \times
  M_d$.  Furthermore, when equipped with the reduced scheme structure,
  $M_d(a,b)$ is a projective variety with at worst finite quotient
  singularities.
\end{lemma}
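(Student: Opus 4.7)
The plan is to identify $M_d(a,b)$ with a relative Kontsevich moduli space over $Y := \Fl(a,b;n)$ and then read off its geometric properties from the fiber bundle structure. Let $\cA \subset \cB \subset \cO_Y^{\oplus n}$ denote the tautological flag on $Y$, and form the relative Grassmann bundle $\mathcal{G} := \Gr(m-a,\cB/\cA) \to Y$, whose fiber over $(A,B) \in Y$ is $\Gr(m-a,B/A) \cong \Gr(m-a,b-a)$. Because $a \leq m \leq b$, the assignment $(A,B,V/A) \mapsto (A,B,V)$, where $V \subset \C^n$ is the preimage of $V/A$ under $B \twoheadrightarrow B/A$, identifies $\mathcal{G}$ with $\Fl(a,m,b;n)$ and produces a closed immersion $\mathcal{G} \hookrightarrow Y \times X$.

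Consider the relative moduli space $\cM := \Mb_{0,3}(\mathcal{G}/Y, d) \to Y$, whose points are triples $(A,B,\bar f)$ with $\bar f \colon C \to \Gr(m-a,B/A)$ a $3$-pointed genus zero stable map of degree $d$. Composing $\bar f$ with the inclusion $\Gr(m-a,B/A) \hookrightarrow X$, $V/A \mapsto V$, defines a morphism
\[
  \Phi \colon \cM \ \longrightarrow \ Y \times M_d, \qquad (A,B,\bar f) \ \longmapsto \ (A,B,f).
\]
A stable map $f \colon C \to X$ satisfies $A \subset \Ker(f)$ and $\Span(f) \subset B$ exactly when the universal rank-$m$ subsheaf $V = f^*\cS$ lies between $A \otimes \cO_C$ and $B \otimes \cO_C$, in which case $V/(A \otimes \cO_C)$ defines a stable map $\bar f \colon C \to \Gr(m-a,B/A)$ of the same degree $d$ (the embedding $\Gr(m-a,b-a) \hookrightarrow X$ induces an isomorphism on $H_2$). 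Hence $\Phi$ is a bijection onto the set $M_d(a,b)$. Moreover $\cM$ is projective, being proper over the projective variety $Y$, so $\Phi$ is proper, and it is injective on Zariski tangent spaces because $\mathcal{G} \hookrightarrow Y \times X$ is a closed immersion. Therefore $\Phi$ is a proper, unramified, bijective morphism, hence a closed immersion, so $M_d(a,b)$ is closed in $Y \times M_d$ and the induced map $\cM \to M_d(a,b)$ is an isomorphism of schemes for the reduced structure on the target.

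The required properties of $M_d(a,b)$ then follow from those of $\cM$. Because $\cB/\cA$ is a vector bundle, the Grassmann bundle $\mathcal{G} \to Y$ is Zariski-locally trivial, and so is the associated relative Kontsevich moduli space $\cM \to Y$, with fiber $\Mb_{0,3}(\Gr(m-a,b-a),d)$. By the properties of Kontsevich moduli spaces of homogeneous spaces recalled in Section~\ref{S:gwvar} (in particular the irreducibility results of Kim--Pandharipande and Thomsen), this fiber is an irreducible projective variety with at worst finite quotient singularities. Since $Y = \GL(n)/P$ is itself smooth, projective and irreducible, standard fibration arguments show that $\cM$ is irreducible, projective, and has at worst finite quotient singularities; the same then holds for $M_d(a,b) \cong \cM$.

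The step I expect to require the most care is verifying that $\Phi$, and equivalently the inverse assignment $(A,B,f) \mapsto (A,B,\bar f)$ on flat families, is a genuine scheme-theoretic morphism rather than merely a set-theoretic bijection, and that the fibration $\cM \to Y$ is Zariski-locally trivial. The cleanest way to handle both points is to exploit $\GL(n)$-equivariance: $Y$ is the homogeneous space $\GL(n)/P$ for a parabolic $P$, every construction above is $\GL(n)$-equivariant, and $\cM \to Y$ is the fiber bundle associated to the principal $P$-bundle $\GL(n) \to Y$; local triviality and the scheme structure of $\Phi$ then become formal.
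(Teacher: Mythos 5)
Your proposal is correct and takes essentially the same route as the paper: the paper also views $M_d(a,b)$ as fibered over $Y=\Fl(a,b;n)$, trivializes the tautological flag over an open $U\subset Y$, and identifies $M_d(a,b)\cap\pr_1^{-1}(U)$ with $U\times \Mb_{0,3}(\Gr(m-a,B_0/A_0),d)$, citing Fulton--Pandharipande for the fact that stable maps with image in a fixed sub-Grassmannian form a closed, reduced subvariety isomorphic to $\Mb_{0,3}(X',d)$, from which closedness, irreducibility, projectivity, and finite quotient singularities follow. The only step of yours I would not lean on is deducing that $\Phi$ is unramified from injectivity on Zariski tangent spaces of the \emph{coarse} spaces (at maps with automorphisms these tangent spaces are not the deformation spaces), but as you yourself note, the equivariant local-trivialization fallback -- which is precisely the paper's argument -- renders that detour unnecessary.
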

\begin{proof}
  Let $\cA \subset \cB \subset \C^n \times Y$ be the tautological flag
  on $Y = \Fl(a,b;n)$.  Then $M_d(a,b) = \{ (y,f) \in Y \times M_d
  \mid \cA(y) \subset \Ker(f) \text{ and } \Span(f) \subset \cB(y)
  \}$.  Let $U \subset Y$ be an open subset over which the
  tautological flag is isomorphic to the trivial flag $A_0 \times U
  \subset B_0 \times U \subset \C^n \times U$ given by some point
  $(A_0,B_0) \in Y$.  Let $\pr_1 : Y \times M_d \to Y$ be the first
  projection.  It follows from \cite[p.\ 
  12]{fulton.pandharipande:notes} that the subset of stable maps $f :
  C \to X$ in $M_d$ for which $f(C) \subset X' := \Gr(m-a,B_0/A_0)$ is
  closed and isomorphic to $\Mb_{0,3}(X',d)$ with its reduced
  structure.  Since the condition $f(C) \subset X'$ is equivalent to
  demanding that $A_0 \subset \Ker(f)$ and $\Span(f) \subset B_0$, we
  deduce that $M_d(a,b) \cap \pr_1^{-1}(U) = U \times \Mb_{0,3}(X',d)$
  is closed and reduced in $\pr_1^{-1}(U)$.  The lemma follows from
  this.
\end{proof}

\begin{cor} \label{C:semicont}
  The function $f \mapsto \dim \Ker(f)$ is upper semicontinuous on
  $M_d$, and $f \mapsto \dim \Span(f)$ is lower semicontinuous.
\end{cor}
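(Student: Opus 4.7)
The plan is to deduce both semicontinuity statements directly from the preceding lemma by means of proper projections onto $M_d$.

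First I would observe that, for any integer $a$ with $0 \leq a \leq m$, setting $b = n$ identifies $\Fl(a,n;n)$ with the Grassmannian $\Gr(a,n)$ and trivializes the span condition, so
\[
  M_d(a,n) = \{(A,f) \in \Gr(a,n) \times M_d \mid A \subset \Ker(f)\}
\]
is a closed subset of $\Gr(a,n) \times M_d$ by the lemma. Let $\pi : \Gr(a,n) \times M_d \to M_d$ be the second projection. Since $\Gr(a,n)$ is projective, $\pi$ is a closed map, so $\pi(M_d(a,n))$ is closed in $M_d$. But a point $f \in M_d$ lies in this image exactly when there exists an $a$-dimensional subspace $A \subset \Ker(f)$, i.e.\ when $\dim\Ker(f) \geq a$. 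Hence
\[
  \{f \in M_d : \dim \Ker(f) \geq a\}
\]
is closed for every $a$, which is precisely upper semicontinuity of $f \mapsto \dim\Ker(f)$.

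The argument for $\dim \Span$ is symmetric. Taking $a = 0$ in the lemma yields the closed subset
\[
  M_d(0,b) = \{(B,f) \in \Gr(b,n) \times M_d \mid \Span(f) \subset B\}
\]
of $\Gr(b,n) \times M_d$. Projection to $M_d$ is again proper, and its image is $\{f \in M_d : \dim\Span(f) \leq b\}$. This set is therefore closed for every $b$, so $f \mapsto \dim\Span(f)$ is lower semicontinuous.

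There is essentially no obstacle here beyond recognizing that both level sets in question are tautologically expressed as projections of the closed sets $M_d(a,n)$ and $M_d(0,b)$ provided by the preceding lemma; the only care needed is to verify that those two boundary cases are legitimate instances of the lemma (i.e.\ that $0 \leq a \leq m$ and $m \leq b \leq n$ are satisfied), which they are for every $a \in \{0,\dots,m\}$ and $b \in \{m,\dots,n\}$, the only values for which the semicontinuity statements give non-trivial information anyway.
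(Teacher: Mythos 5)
Your proof is correct and follows essentially the same route as the paper: the paper likewise observes that $\{f \in M_d : \dim\Ker(f) \geq a \text{ and } \dim\Span(f) \leq b\}$ is the image of the projective variety $M_d(a,b)$ under the proper projection $\Fl(a,b;n) \times M_d \to M_d$, hence closed. Your only difference is that you make the specializations $b=n$ and $a=0$ explicit to isolate the two conditions, which the paper leaves implicit.
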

\begin{proof}
  The set of stable maps $f$ in $M_d$ for which $\dim \Ker(f) \geq a$
  and $\dim \Span(f) \leq b$ is the image of the projective variety
  $M_d(a,b)$ under the projection $\Fl(a,b;n) \times M_d \to M_d$.
\end{proof}

From now on we set $a = \max(m-d,0)$ and $b = \min(m+d,n)$ as in the
statement of Theorem~\ref{T:mainA}.  In this case we write $\Bl_d =
M_d(a,b) = \{ (A,B,f) \in Y_d \times M_d \mid A \subset \Ker(f) \text{
  and } \Span(f) \subset B \}$.  We suspect that this variety is
isomorphic to the blowup of $M_d$ along the closed subset where the
kernel and span fail to have the expected dimensions $(a,b)$, but we
have not found a proof.  We also define the variety $Z^{(3)}_d =
\{(A,V_1,V_2,V_3,B) \mid (A,B) \in Y_d, V_i \in X, A \subset V_i
\subset B \}$.  Now construct the following commutative diagram, which
is the heart of the proof of Theorem~\ref{T:mainA}.
\begin{equation} \label{E:diagram}
  \xymatrix{\Bl_d \ar[rr]^{\pi} \ar[d]^{\phi} & & M_d \ar[d]^{\ev_i} \\ 
  Z_d^{(3)}\ar[r]^{e_i} & Z_d \ar[r]^{p} \ar[d]^{q} & X \\ & Y_d} 
\end{equation}
Here $\pi$ is the projection to the second factor of $Y_d \times M_d$,
and the other maps are given by $\phi(A,B,f) =
(A,\ev_1(f),\ev_2(f),\ev_3(f),B)$ and $e_i(A,V_1,V_2,V_3,B) =
(A,V_i,B)$.  All the maps in the diagram are $T$-equivariant.

\begin{lemma} \label{L:biratA}
  The map $\pi : \Bl_d \to M_d$ is birational.
\end{lemma}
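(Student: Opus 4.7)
The plan is to pin down an explicit dense open subset $U \subset M_d$ on which $\pi$ restricts to an isomorphism. I would set
\[ U \ := \ \{ f \in M_d : \dim \Ker(f) = a \text{ and } \dim \Span(f) = b \}. \]
By the universal bounds (\ref{E:dimbound}) combined with Corollary~\ref{C:semicont}, $U$ is open in $M_d$, and it is nonempty because the discussion preceding the definition of $\Bl_d$, citing \cite[Prop.~1]{buch.kresch.ea:gromov-witten}, produces stable maps $\bP^1 \to X$ of degree $d$ whose kernel and span attain the extreme dimensions $a$ and $b$. Since $M_d$ is irreducible, $U$ is therefore dense.

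Next, for $f \in U$, any triple $(A,B,f) \in \pi^{-1}(f)$ must satisfy $A \subset \Ker(f)$ with $\dim A = a = \dim \Ker(f)$ and $\Span(f) \subset B$ with $\dim B = b = \dim \Span(f)$, forcing $(A,B) = (\Ker(f), \Span(f))$. Thus $\pi|_{\pi^{-1}(U)} : \pi^{-1}(U) \to U$ has singleton fibers. The map $\pi$ is proper (it factors as the closed immersion $\Bl_d \hookrightarrow Y_d \times M_d$ followed by projection onto $M_d$, and $Y_d$ is projective), so $\pi|_{\pi^{-1}(U)}$ is proper with finite fibers, hence finite. Because $U$ is normal (finite quotient singularities imply normality in characteristic zero) and $\pi^{-1}(U)$ is reduced and irreducible, the finite bijective-on-closed-points morphism $\pi|_{\pi^{-1}(U)}$ is birational, and Zariski's Main Theorem upgrades it to an isomorphism. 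This yields the birationality of $\pi$.

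The only substantive inputs — openness and nonemptiness of $U$, together with normality of $M_d$ and $\Bl_d$ — are all already in hand from Corollary~\ref{C:semicont}, the construction in \cite{buch.kresch.ea:gromov-witten}, and the lemma preceding the statement; the main obstacle is therefore simply organizing these facts correctly. A more hands-on alternative, bypassing Zariski's Main Theorem, would be to exhibit the inverse $f \mapsto (\Ker(f), \Span(f), f)$ directly as a morphism on $U$: the kernel and span of the universal stable map have locally constant rank on $U$, so they assemble into algebraic subbundles of the trivial bundle $\cO_U^{\oplus n}$, giving a morphism $U \to Y_d$. However, this concrete construction is not required for the birational statement above.
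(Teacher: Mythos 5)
Your proposal is correct and follows essentially the same route as the paper: the paper's proof notes that $\pi$ is surjective by the bounds (\ref{E:dimbound}) and that over the dense open locus where these bounds are equalities the fiber is the single point $(\Ker(f),\Span(f),f)$. You merely add extra (valid but not strictly necessary) rigor by upgrading the generic bijection to an isomorphism over $U$ via properness, finiteness, normality of $M_d$, and Zariski's Main Theorem.
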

\begin{proof}
  This map is surjective by the dimension bounds (\ref{E:dimbound}).
  Furthermore, for a dense open subset of stable maps $f$ in $M_d$ the
  dimension bounds (\ref{E:dimbound}) are satisfied with equality,
  which implies that $\pi^{-1}(f) = (\Ker(f),\Span(f),f)$.
\end{proof}

\begin{prop} \label{P:phi}
We have $\phi_* [\cO_{\Bl_d}] = [\cO_{Z_d^{(3)}}]$ in $K^T(Z_d^{(3)})$.
\end{prop}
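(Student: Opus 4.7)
The plan is to verify the hypotheses of Theorem~\ref{T:push} applied to the map $\phi : \Bl_d \to Z_d^{(3)}$. The conclusion $\phi_*[\cO_{\Bl_d}] = [\cO_{Z_d^{(3)}}]$ will then follow, once the pushforward is interpreted in $K^T(Z_d^{(3)}) \cong K_T(Z_d^{(3)})$. Concretely, I need to establish: (1) $\phi$ is a proper surjective $T$-equivariant morphism; (2) $\Bl_d$ has rational singularities; (3) $Z_d^{(3)}$ has rational singularities; and (4) the general fiber of $\phi$ is an irreducible rational variety.

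\emph{Target is smooth.} I would first observe that $Z_d^{(3)}$ is the triple fiber product $Z_d \times_{Y_d} Z_d \times_{Y_d} Z_d$ over $Y_d$ with respect to the Grassmann bundle $q : Z_d \to Y_d$. Since $q$ is smooth with connected fiber $\Gr(m-a,b-a)$, the fiber product is smooth over $Y_d$ with connected fibers $\Gr(m-a,b-a)^3$, so $Z_d^{(3)}$ is smooth and irreducible. In particular it has rational singularities. Properness and equivariance of $\phi$ are immediate: $\Bl_d$ is projective and $T$-stable by the preceding lemma, and $\phi$ is built from evaluation maps. That same lemma identifies $\Bl_d$ as a projective $T$-variety with at worst finite quotient singularities, which are rational.

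\emph{The general fiber and surjectivity.} This is the crux. Using the local trivialization from the preceding lemma, $\Bl_d$ is locally of the form $U \times \Mb_{0,3}(X', d)$, where $X' = \Gr(m-a, B_0/A_0) \cong \Gr(m', n')$ with $m' = m-a$ and $n' = b-a$. Under this identification the fiber $\phi^{-1}(A, V_1, V_2, V_3, B)$ becomes the Gromov-Witten variety $GW_d(V_1/A, V_2/A, V_3/A)$ in $X'$. With $a = \max(m-d, 0)$ and $b = \min(m+d, n)$, a short case split on whether $d \leq m$ and whether $d \leq n-m$ shows in every case that $m' \leq d$ and $n' - m' \leq d$, so $d \geq \max(m', n'-m')$. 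Corollary~\ref{C:ratA} then guarantees that for a general point of $Z_d^{(3)}$ this Gromov-Witten variety is non-empty and rational, and Theorem~\ref{T:gwrat} (with $N = 3$, so that $\Mb_{0,3}$ is a single point) upgrades this to irreducible rational. Surjectivity of $\phi$ follows by a standard argument: the image is closed by properness, contains a non-empty open subset by the non-empty generic fiber, and $Z_d^{(3)}$ is irreducible. Theorem~\ref{T:push} then produces the identity $\phi_*[\cO_{\Bl_d}] = [\cO_{Z_d^{(3)}}]$.

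\emph{Main obstacle.} Modulo the inputs already available in the paper, nothing is truly hard. The two delicate points are the precise identification of the fiber of $\phi$ with a Gromov-Witten variety on the auxiliary Grassmannian $X'$, which relies on reading off the local description of $\Bl_d$ from the preceding lemma, and the elementary case analysis that checks the degree condition $d \geq \max(m', n'-m')$ needed so that Corollary~\ref{C:ratA} actually applies. Everything else is a straightforward verification of the hypotheses of Theorem~\ref{T:push}.
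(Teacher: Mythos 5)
Your proposal is correct and follows essentially the same route as the paper: identify the general fiber of $\phi$ with a three-point Gromov--Witten variety on $X' = \Gr(m-a,B/A)$ and apply Theorem~\ref{T:push} together with Corollary~\ref{C:ratA} (via Theorem~\ref{T:gwrat}). The only difference is cosmetic: the paper also records the shortcut that for $d \leq \min(m,k)$ the map $\phi$ is birational because $I_d(\pt,\pt,\pt)=1$ on $\Gr(d,2d)$, whereas you instead verify explicitly, by the case analysis on $a=\max(m-d,0)$ and $b=\min(m+d,n)$, that the degree hypothesis of Corollary~\ref{C:ratA} holds in all cases.
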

\begin{proof}
  For any point $(A,B) \in Y_d$ we have $(q e_i \phi)^{-1}(A,B) =
  \Mb_{0,3}(X',d)$ where $X' = \Gr(m-a,B/A)$, and
  $\phi^{-1}(A,V_1,V_2,V_3,B)$ is the set of stable maps $f : C \to
  X'$ that send the three marked points to $V_1/A$, $V_2/A$, $V_3/A$.
  
  If $d \leq \min(m,k)$ then $X' = \Gr(d,2d)$.  Since the
  Gromov-Witten invariant $I_d(\pt,\pt,\pt)$ on $X'$ is equal to
  one, it follows that the general fiber of $\phi$ is a single point.
  This Gromov-Witten invariant can be computed with Bertram's
  structure theorems \cite{bertram:quantum} or by using
  \cite[Prop.~1]{buch.kresch.ea:gromov-witten}.  We conclude that
  $\phi$ is a birational isomorphism, and the proposition follows
  because both $\Bl_d$ and $Z_d^{(3)}$ have rational singularities.
  
  For arbitrary degrees $d$, the proposition follows from
  Theorem~\ref{T:push}, since the general fibers of $\phi$ are
  irreducible rational varieties by Corollary~\ref{C:ratA}.
\end{proof}

\begin{proof}[Proof of Theorem~\ref{T:mainA}]
  By using Lemma~\ref{L:biratA} and Proposition~\ref{P:phi}, it
  follows from the projection formula that
  \[ \begin{split}
   \euler{M_d}(\ev_1^*(\alpha_1) \cdot \ev_2^*(\alpha_2) \cdot
      \ev_3^*(\alpha_3)) 
   &= \euler{\Bl_d}(\pi^* \ev_1^*(\alpha_1) \cdot \pi^* \ev_2^*(\alpha_2)
      \cdot \pi^* \ev_3^*(\alpha_3)) \\
   &= \euler{Z_d^{(3)}}( e_1^*\, p^* (\alpha_1) \cdot 
      e_2^*\, p^* (\alpha_2) \cdot
      e_3^*\, p^* (\alpha_3) ) \,.
   \end{split}
  \]
  Since $Z_d^{(3)} = Z_d \times_{Y_d} Z_d \times_{Y_d} Z_d$, the rest
  follows from Lemma~\ref{L:product}.
\end{proof}

\begin{remark}
  The first author has conjectured a puzzle-based combinatorial
  formula for the structure constants of the equivariant cohomology of
  two-step flag varieties \cite[\S I.7]{coskun.vakil:geometric}, which
  generalizes results of Knutson and Tao \cite{knutson.tao:puzzles,
    knutson:conjectural}.  In view of Corollary~\ref{C:twostep}, this
  conjecture specializes to a Littlewood-Richardson rule for the
  equivariant quantum cohomology ring $\QH_T(X)$ of any Grassmannian.
  We refer to \cite[\S 2.4]{buch.kresch.ea:gromov-witten} for the
  translation.
\end{remark}



\section{Quantum $K$-theory of Grassmannians}
\label{S:qk}

\subsection{Definitions}

In this section we apply Theorem~\ref{T:mainA} to compute the quantum
$K$-theory of the Grassmannian $X = \Gr(m,n)$.  Recall that a
partition is a weakly decreasing sequence of non-negative integers
$\lambda = (\lambda_1 \geq \lambda_2 \geq \dots \geq \lambda_l \geq
0)$.  A partition can be identified with its Young diagram, which has
$\lambda_i$ boxes in row $i$.  If (the Young diagram of) $\lambda$ is
contained in another partition $\nu$, then $\nu/\lambda$ denotes the
skew diagram of boxes in $\nu$ which are not in $\lambda$.  The
Schubert varieties in $X$ are indexed by partitions contained in the
rectangular partition $(k)^m = (k,k,\dots,k)$ with $m$ rows and
$k=n-m$ columns.  The Schubert variety for $\lambda$ relative to the
Borel subgroup of $\GL(n)$ stabilizing a complete flag $0 = F_0
\subset F_1 \subset \dots \subset F_n = \C^n$ is defined by
\[ X_\lambda = \{ V \in X \mid \dim(V \cap F_{k+i-\lambda_i}) \geq i
   ~\forall 1 \leq i \leq m \} \,.
\]
The codimension of $X_\lambda$ in $X$ is equal to the weight
$|\lambda| = \sum \lambda_i$.  The Schubert classes $[X_\lambda]$
Poincar\'e dual to the Schubert varieties form a $\Z$-basis for the
cohomology ring $H^*(X)$.  The ordinary (small) quantum cohomology
ring of $X$ is an algebra over the polynomial ring $\Z[q]$, which as a
module is defined by $\QH(X) = H^*(X) \otimes_\Z \Z[q]$.  The ring
structure is given by
\begin{equation} \label{E:qcprd}
  [X_\lambda] \star [X_\mu] = \sum_{\nu,d \geq 0}
  I_d([X_\lambda],[X_\mu],[X_{\nu^\vee}])\, q^d\, [X_\nu]
\end{equation}
where the sum is over all partitions $\nu \subset (k)^m$ and
non-negative degrees $d$, and $\nu^\vee = (k-\nu_m, \dots, k-\nu_1)$
is the Poincar\'e dual partition of $\nu$.

The Grothendieck ring of $X$ has a basis consisting of the Schubert
structure sheaves $\cO_\lambda := [\cO_{X_\lambda}]$, $K(X) =
\bigoplus_\lambda \Z\cdot \cO_\lambda$.  The determinant of the
tautological subbundle on $X$ defines the class $t = 1 - \cO_{(1)}$ in
$K(X)$.  Define the $K$-theoretic dual Schubert class for $\lambda$ by
$\cO_\lambda^\vee = t\cdot \cO_{\lambda^\vee}$.  By
\cite[\S8]{buch:littlewood-richardson} we have the Poincar\'e duality
identity $\euler{X}(\cO_\lambda, \cO_\nu^\vee) =
\delta_{\lambda,\nu}$.

The quantum $K$-theory ring of $X$ is {\em not\/} obtained by
replacing the cohomological Gromov-Witten invariants with
$K$-theoretic invariants in (\ref{E:qcprd}), since this does not lead
to an associative ring. Instead we need the following definition of
structure constants, which comes from Givental's paper
\cite{givental:on}.  Given three partitions $\lambda, \mu, \nu$,
define the constant
\begin{equation}\label{E:defQKcoeff} 
  N^{\nu,d}_{\lambda,\mu} = \sum_{d_0,\dots,d_r,
  \kappa_1,\dots,\kappa_r} (-1)^r \, I_{d_0}(\cO_\lambda, \cO_\mu,
  \cO_{\kappa_1}^\vee) \left( \prod_{i=1}^{r-1}
  I_{d_i}(\cO_{\kappa_i},\cO_{\kappa_{i+1}}^\vee) \right) \,
  I_{d_r}(\cO_{\kappa_r}, \cO_\nu^\vee)
\end{equation}
where the sum is over all sequences of non-negative integers
$(d_0,\dots,d_r)$, $r \geq 0$, such that $\sum d_i = d$ and $d_i > 0$
for $i>0$, and all partitions $\kappa_1,\dots,\kappa_r$.  The
two-point invariants can be obtained using the identity
$I_d(\alpha_1,\alpha_2) = I_d(\alpha_1,\alpha_2,1)$, which holds
because the general fiber of the forgetful map $\Mb_{0,3}(X,d) \to
\Mb_{0,2}(X,d)$ is rational, see \cite[Cor.~1]{givental:on} or
Theorem~\ref{T:push}.

The constants $N^{\nu,d}_{\lambda,\mu}$ can also
be defined by the (equivalent) inductive identity 
\begin{equation}\label{E:QKrecurrence}
  N^{\nu,d}_{\lambda,\mu} = I_d(\cO_\lambda,\cO_\mu,\cO_\nu^\vee) -
  \sum_{\kappa,e>0} N^{\kappa,d-e}_{\lambda,\mu} \cdot
  I_e(\cO_\kappa,\cO_\nu^\vee) \,.
\end{equation} 
The degree zero constants $N^{\nu,0}_{\lambda,\mu}$ are the ordinary
$K$-theoretic Schubert structure constants, i.e.  $\cO_\lambda \cdot
\cO_\mu = \sum_\nu N^{\nu,0}_{\lambda,\mu} \, \cO_\nu$ in $K(X)$.

The $K$-theoretic quantum ring of $X$ is the $\Z\llbracket q
\rrbracket$-algebra given by $\QK(X) = K(X) \otimes_Z \Z\llbracket q
\rrbracket = \bigoplus_\lambda \Z\llbracket
q \rrbracket \, \cO_\lambda$ as a module, and the algebra structure is
defined by
\begin{equation} \label{E:qkgr}
  \cO_\lambda \star \cO_\mu = \sum_{\nu,d\geq 0} N^{\nu,d}_{\lambda,\mu}
  \, q^d \, \cO_\nu \,.
\end{equation}
It was proved by Givental that this product is associative
\cite{givental:on}.

\begin{remark}
In the definition of the ring $\QK(X)$ we have replaced the polynomial
ring $\Z[q]$ with the power series ring $\Z\llbracket q \rrbracket$,
since the structure constants $N^{\nu,d}_{\lambda,\mu}$ might be
non-zero for arbitrarily high degrees $d$.  In fact, the invariants
$I_d(\cO_\lambda, \cO_\mu, \cO_\nu)$ are equal to 1 for all
sufficiently large degrees $d$.  However, we will see in
Corollary~\ref{C:qkfinite} that $N^{\nu,d}_{\lambda,\mu}$ is zero when
$d > \ell(\lambda)$, so in fact we only work with polynomials in $q$.
It appears to be an open question if this also occurs for other
homogeneous spaces $G/P$.
\end{remark}

\begin{remark}
Similarly to ordinary $K$-theory, $\QK(X)$ admits a topological
filtration by ideals defined by $F_j \QK(X) = \bigoplus_{|\lambda|+ni
  \ge j} \Z \cdot q^i \cO_\lambda$, and the associated graded ring is the
ordinary quantum ring $\QH(X)$.
\end{remark}

\begin{remark} \label{R:eulerdiff}
  Implicit in Givental's proof that the $K$-theoretic quantum product
  is associative is the fact that the sum in (\ref{E:defQKcoeff}) can
  be interpreted as a difference between two Euler characteristics.
  More precisely, let $\mathcal{D}$ be the closure of the locus of
  maps in $\Mb_{0,3}(X,d)$ for which the domain has two components,
  the first and second marked points belong to one of these
  components, and the third marked point belongs to the other
  component.  This subvariety is a union of boundary divisors in
  $\Mb_{0,3}(X,d)$ (see e.g.\ \cite[\S 6]{fulton.pandharipande:notes})
  and these divisors have normal crossings, up to a finite group
  quotient (cf.\ Thm.~3 in {\em loc.~cit.}).  Then
  (\ref{E:defQKcoeff}) can be rewritten as:
  \begin{equation*} N_{\lambda,\mu}^{\nu,d} =
    \euler{\overline{\mathcal{M}}_{0,3}(X,d)}(\ev_1^*(\cO_\lambda)
    \cdot \ev_2^*(\cO_\mu) \cdot \ev_3^*(\cO_\nu^\vee)) -
    \euler{\mathcal{D}}(\ev_1^*(\cO_\lambda) \cdot \ev_2^*(\cO_\mu)
    \cdot \ev_3^*(\cO_\nu^\vee)) \,. 
  \end{equation*}
  This definition extends in an obvious manner to give the structure
  constants for the quantum $K$-theory of any homogeneous space $Y =
  G/P$.  It is interesting to ask if $\ev_* [\cO_{\Mb_{0,3}(Y,d)}] =
  [\cO_{Y \times Y \times Y}] = \ev_*[\cO_{\mathcal{D}}]$ for
  sufficiently large degrees $d$.  This would imply that $K$-theoretic
  quantum products are finite.
\end{remark}

\subsection{Pieri formula}

Our main result about the quantum $K$-theory of Grassmannians is a
Pieri formula for multiplying with the special classes $\cO_i =
\cO_{(i)}$ given by partitions with a single part.  It generalizes
Bertram's Pieri formula \cite{bertram:quantum} for the ordinary
quantum cohomology $H^*(X)$ as well as Lenart's Pieri formula in
ordinary $K$-theory \cite[Thm.~3.4]{lenart:combinatorial}.  Lenart's
formula states that $N^{0,\nu}_{i,\lambda}$ is non-zero only if
$\nu/\lambda$ is a horizontal strip, in which case
\begin{equation} \label{E:lenart}
  N^{\nu,0}_{i,\lambda} = (-1)^{|\nu/\lambda|-i} 
  \binom{r(\nu/\lambda)-1}{|\nu/\lambda|-i} \, 
\end{equation}
where $r(\nu/\lambda)$ is the number of non-empty rows in the skew
diagram $\nu/\lambda$.

Define the {\em outer rim\/} of the partition $\lambda$ to be the set
of boxes in its Young diagram that have no boxes strictly to the
South-East.  Any product of the form $\cO_i \star \cO_\lambda$ in $\QK(X)$
is determined by the following theorem combined with (\ref{E:lenart}).

\begin{thm} \label{T:qkpieri}
  The constants $N^{\nu,d}_{i,\lambda}$ are zero for $d \geq 2$.
  Furthermore, $N^{\nu,1}_{i,\lambda}$ is non-zero only if
  $\ell(\lambda)=m$ and $\nu$ can be obtained from $\lambda$ by
  removing a subset of the boxes in the outer rim of $\lambda$, with
  at least one box removed from each row.  When these conditions hold,
  we have
  \[ N^{\nu,1}_{i,\lambda} = (-1)^e \binom{r}{e} \]
  where $e = |\nu|+n-i-|\lambda|$ and $r$ is the number of rows of
  $\nu$ that contain at least one box from the outer rim of $\lambda$,
  excluding the bottom row of this rim.
\end{thm}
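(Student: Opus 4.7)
My strategy is to compute all the $K$-theoretic Gromov--Witten invariants entering the recurrence~(\ref{E:QKrecurrence}) via Corollary~\ref{C:twostep}, which turns each $I_d(\cO_i,\cO_\lambda,\cO_{\nu^\vee})$ into a triple Euler characteristic of modified Schubert sheaves on the two-step flag $Y_d=\Fl(a,b;n)$, and then to invert the recurrence by induction on $d$. The base case $d=0$ is Lenart's Pieri rule~(\ref{E:lenart}); the two-point invariants appearing on the right side of~(\ref{E:QKrecurrence}) are reduced to three-point invariants via Givental's identity $I_e(\alpha,\beta)=I_e(\alpha,\beta,1)$ and then handled by the same $Y_e$ reduction.

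For $d=1$, where $Y_1=\Fl(m-1,m+1;n)$, I would first identify $\wt X_{(i)} \subset Y_1$ as a codimension-$(i-1)$ Schubert variety cut out by an incidence condition on the $(m+1)$-plane, and, in the spirit of the BKT ``quantum-to-classical'' construction, recognize $\wt X_\lambda$ as a Schubert variety on $Y_1$ of the expected codimension $|\lambda|-1$ whenever $\ell(\lambda)=m$; when $\ell(\lambda)<m$ the $d^2=1$ codimension drop fails, so $[\cO_{\wt X_\lambda}]$ lies in a deeper piece of the topological filtration and, after combining with $\cO_{\nu^\vee}$ and inverting the recurrence, one can show $N^{\nu,1}_{i,\lambda}=0$. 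Next I would compute the triple Euler characteristic on $Y_1$ by factoring the projections $Y_1 \to \Gr(m-1,n)$ and $Y_1 \to \Gr(m+1,n)$ through Lemma~\ref{L:product}, and use Lemma~\ref{L:pushschub} to translate everything into a Pieri-type product of $\cO_{(i)}$ with an ordinary Schubert class in the $K$-theory of a single Grassmannian; Lenart's formula~(\ref{E:lenart}) and Poincar\'e duality then yield an alternating binomial sum indexed by decompositions of the outer rim of $\lambda$ between the two Grassmannian projections. Finally, subtracting $\sum_\kappa N^{\kappa,0}_{i,\lambda}\,I_1(\cO_\kappa,\cO_{\nu^\vee})$ per~(\ref{E:QKrecurrence}) should collapse the sum to the compact form $(-1)^e \binom{r}{e}$.

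For $d\ge 2$, the Young diagram $(i)$ cannot contain a $d\times d$ rectangle, so $q_*p^*[X_{(i)}]=0 \in H^*(Y_d)$ while $[\cO_{\wt X_{(i)}}] \in K^T(Y_d)$ sits in a lower strand of the topological filtration. I would combine this degeneracy with Remark~\ref{R:eulerdiff} to match the Euler characteristic on $\Mb_{0,3}(X,d)$ with the one on the boundary divisor $\mathcal D$, using (an analog of) the diagram~(\ref{E:diagram}) to push both computations down to smaller-degree moduli where the inductive hypothesis applies, thereby forcing $N^{\nu,d}_{i,\lambda}=0$. The principal obstacle I foresee is combinatorial: matching the nested alternating binomial sums produced on $Y_1$ (one layer from Lenart's rule in each Grassmannian factor, plus a further layer from the recurrence) against the compact answer $(-1)^e\binom{r}{e}$ requires a delicate telescoping, in which the stipulation that $r$ excludes the bottom row of the outer rim of $\lambda$ should emerge from the asymmetry between the projections to $\Gr(m-1,n)$ and $\Gr(m+1,n)$. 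This is precisely the ``additional combinatorial difficulty'' foreshadowed in the introduction, and I expect it to consume most of the work.
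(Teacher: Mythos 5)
The most serious gap is your treatment of the vanishing for $d \geq 2$.  Your argument --- that $(i)$ contains no $d\times d$ rectangle, so $q_*p^*[X_{(i)}]=0$ in cohomology and $[\cO_{\wt X_{(i)}}]$ sits in a deeper piece of the topological filtration, whence a ``matching'' of the Euler characteristics of $\Mb_{0,3}(X,d)$ and of $\mathcal D$ forces $N^{\nu,d}_{i,\lambda}=0$ --- does not work as stated.  Filtration and cohomological-vanishing considerations only control the leading term of a $K$-theoretic quantity, and the invariants $I_d(\cO_i,\cO_\lambda,\cO_\nu^\vee)$ are themselves nonzero for arbitrarily large $d$; the vanishing of $N^{\nu,d}_{i,\lambda}$ is a cancellation inside the recurrence (\ref{E:QKrecurrence}), and the equality of the two Euler characteristics in Remark~\ref{R:eulerdiff} is precisely the statement to be proved, for which you offer no mechanism.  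The paper's mechanism is concrete: since $\ell((i))=1\le d$, Theorem~\ref{T:specialgw} descends the whole computation from $Y_d$ to the single Grassmannian $X_d=\Gr(m+d,n)$ (not to the pair of projections you propose), and the Grassmann-bundle comparison of Lemma~\ref{L:pushdiff} yields Corollary~\ref{cor:diff}: $I_d(\cO_i,\cO_\lambda,\alpha)-I_d(\cO_i\cdot\cO_\lambda,\alpha)=\sum_{\ell(\mu)=m+1}N^{\mu,0}_{i,\lambda}\,\euler{X}(\cO_{\wh{\wb\mu}(d)}\cdot\alpha)$.  For $d=1$ this identity \emph{is} the Pieri coefficient; for $d\ge 2$ its right-hand side rewrites as $\sum_\kappa N^{\kappa,1}_{i,\lambda}\, I_{d-1}(\cO_\kappa,\cO_\nu^\vee)$, which matches exactly the $e=1$ terms of the recurrence and kills $N^{\nu,d}_{i,\lambda}$ by induction on $d$.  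Without an identity of this type your proposed telescoping has nothing to telescope against.

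Second, your route to the vanishing when $\ell(\lambda)<m$ rests on a false premise: for $d=1$ the codimension of $\wt X_\lambda$ drops by $d^2=1$ exactly when $\lambda$ contains a $1\times 1$ box, i.e.\ for \emph{every} nonempty $\lambda$, not only when $\ell(\lambda)=m$; and in any case ``lying in a deeper filtration piece'' would at best kill the leading cohomological term, not the integer $N^{\nu,1}_{i,\lambda}$.  In the paper the hypothesis $\ell(\lambda)=m$ enters because the correction terms in Corollary~\ref{cor:diff} are indexed by partitions $\mu$ with exactly $m+1$ rows, and Lenart's rule forces $\mu/\lambda$ to be a horizontal strip, so $\ell(\mu)\le\ell(\lambda)+1$.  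A smaller point: Lemma~\ref{L:product} does not apply to the two projections $Y_1\to\Gr(m-1,n)$ and $Y_1\to\Gr(m+1,n)$, since $Y_1$ is an incidence variety rather than a fiber product of these Grassmannians over a common base; the paper uses only $\pi:Y_d\to X_d$ together with the projection formula.  Finally, the closed form $(-1)^e\binom{r}{e}$ is extracted from the resulting sum $N^{\nu,1}_{i,\lambda}=\sum_{j=\nu_1+1}^{k}N^{(j,\nu+1^m),0}_{i,\lambda}$ by a sign-reversing involution on marked horizontal strips; this is the combinatorial step you correctly flag as outstanding, but as it stands your outline is missing both that step and the structural identities that make it accessible.
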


This result will be proved in the next section.

\begin{example}
  On $X = \Gr(3,6)$ we have $N^{(2,1),1}_{2,(3,2,1)} = -2$, since
  $e=1$ and $r=2$.  In fact, the partitions $\lambda = (3,2,1)$ and
  $\nu = (2,1)$ look as follows when the boxes inside the outer rim of
  $\lambda$ are shaded:
  \[ \lambda \ = \ \raisebox{-5mm}{\pic{.5}{exama}} 
     \hspace{10mm} \text{and} \hspace{10mm}
     \nu \ = \ \raisebox{-2mm}{\pic{.5}{examb}}
  \]
  This gives the negative coefficient of the product
  $\cO_2 \star \cO_{3,2,1} = \cO_{3,3,2} + q\,\cO_2 + q\,\cO_{1,1} -
  2\,q\,\cO_{2,1}$ in $\QK(X)$.
\end{example}

\subsection{Giambelli formula}

As a first application of the Pieri formula we derive a Giambelli
formula that expresses $K$-theoretic quantum Schubert classes as
polynomials in the special classes $\cO_i$, $1 \leq i \leq k$.  Let
$c(\nu/\lambda)$ denote the number of non-empty columns of the skew
diagram $\nu/\lambda$.  Given a partition $\mu$ of length $\ell$, we
let $\wh{\mu} = (\mu_1-1,\dots,\mu_\ell-1)$ be the partition obtained
by removing the first column from $\mu$.

\begin{thm}
  Let $a$ be an integer and $\mu$ a partition such that $\mu_1 \leq a
  \leq k$ and $0 < \ell(\mu) < m$.  In the quantum $K$-theory ring
  $\QK(X)$ we have
\[
  \cO_{a,\mu} = \sum_{p \geq a, \nu \subset \mu} (-1)^{|\mu/\nu|} 
  \binom{p-a-1+c(\nu/\wh\mu)}{p-a-|\mu/\nu|} \cO_p \star \cO_\nu \,,
\]
where the sum is over all integers $p \geq a$ and partitions $\nu$
contained in $\mu$ such that $\mu/\nu$ is a vertical strip.
\end{thm}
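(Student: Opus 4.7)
The strategy is to reduce the identity to a statement in the classical Grothendieck ring $K(X)$ and then verify it via Lenart's $K$-theoretic Pieri formula. The key observation is that the hypothesis $\ell(\mu) < m$ forces every $\nu \subset \mu$ appearing on the right-hand side to satisfy $\ell(\nu) \le \ell(\mu) < m$. By Theorem~\ref{T:qkpieri} the quantum corrections $N^{\kappa,d}_{p,\nu}$ all vanish for such $\nu$ (they require $\ell(\nu) = m$), so each quantum product $\cO_p \star \cO_\nu$ on the right-hand side equals the ordinary product $\cO_p \cdot \cO_\nu$ in $K(X)$. Consequently the stated identity involves no quantum deformation at all and reduces to an equality of Schubert expansions in $K(X)$.

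Next, substitute Lenart's Pieri formula (\ref{E:lenart}) into each $\cO_p \cdot \cO_\nu$ and collect the coefficient of $\cO_\lambda$ on the right-hand side; the task becomes to show that, for every $\lambda \subset (k)^m$,
\[
\sum_{\nu,\,p} (-1)^{|\mu/\nu| + |\lambda/\nu| - p} \binom{p-a-1+c(\nu/\wh\mu)}{p-a-|\mu/\nu|} \binom{r(\lambda/\nu)-1}{|\lambda/\nu|-p} \;=\; \delta_{\lambda,(a,\mu)},
\]
where the sum is over $p$ with $a \le p \le k$ and over $\nu$ with $\mu/\nu$ a vertical strip and $\lambda/\nu$ a horizontal strip. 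The plan is to evaluate the inner sum over $p$ via a Chu--Vandermonde convolution, reducing it to a single binomial coefficient depending only on $\lambda, \mu, \nu, a$; then to recognize the remaining outer sum over vertical strips $\mu/\nu$ as an inclusion-exclusion that admits a sign-reversing involution on boxes of $\mu$ away from the outer rim of $(a,\mu)$, so that only the contribution $\nu=\mu$, $\lambda=(a,\mu)$ survives.

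I expect the main obstacle to be the combinatorial identity above: the simultaneous constraints that $\lambda/\nu$ be a horizontal strip and $\mu/\nu$ be a vertical strip make the admissible $\nu$ depend on $\lambda$ in a nontrivial row-by-row fashion, and the column-count $c(\nu/\wh\mu)$ inside the binomial forces extra bookkeeping when $\nu$ is small. As a sanity check and as the base case for an induction on $\ell(\mu)$, the statement for $\ell(\mu) = 1$ specializes to $\cO_{(a,\mu_1)} = \sum_{p=a}^{k} \cO_p \star \cO_{(\mu_1)} - \sum_{p=a+1}^{k} \cO_p$, which can be verified directly from Lenart's formula after noting that $\cO_{(k+1)} = \cO_{(k+1,1)} = 0$ in $K(X)$ and that the resulting sum telescopes. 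This suggests an alternative inductive approach that peels off $\mu_\ell$ from $\mu$ at each step, trading the full combinatorial identity for a sequence of one-row reductions each governed by the base case.
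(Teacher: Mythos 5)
Your reduction to classical $K$-theory is correct and is exactly the paper's own closing step: since every $\nu\subset\mu$ in the sum has $\ell(\nu)\le\ell(\mu)<m$, Theorem~\ref{T:qkpieri} forces all quantum corrections in $\cO_p\star\cO_\nu$ to vanish, so the theorem is equivalent to an identity in $K(X)$. The genuine gap is that you never prove that classical identity. Everything after the reduction is a plan, not an argument: the Chu--Vandermonde evaluation of the sum over $p$ is plausible but not carried out, and the decisive step --- the asserted sign-reversing involution on vertical strips $\mu/\nu$ showing that only $\nu=\mu$, $\lambda=(a,\mu)$ survives --- is not constructed. That is precisely where the combinatorial difficulty lives: the admissible $\nu$ must simultaneously make $\mu/\nu$ a vertical strip and $\lambda/\nu$ a horizontal strip, and the column count $c(\nu/\wh\mu)$ couples the rows of $\mu$, so your alternative induction that ``peels off'' the last row of $\mu$ does not visibly reduce to the verified one-row base case; the inductive step is missing as well. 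As it stands you have a correct reduction plus a verified special case ($\ell(\mu)=1$, which does check out, e.g.\ on $\Gr(2,4)$), but the heart of the statement is unproved.

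For comparison, the paper avoids this combinatorics entirely by quoting prior results: the $K$-theoretic Jacobi--Trudi formula of \cite[Thm.~6.1]{buch:grothendieck} for $\groth_{a,\mu}$, combined with the coproduct expansion $\groth_{\mu\sslash(1^s)}=\sum_\nu(-1)^{s-|\mu/\nu|}\binom{c(\nu/\wh\mu)}{s-|\mu/\nu|}\groth_\nu$ of \cite[Cor.~7.1]{buch:littlewood-richardson}, and a single Vandermonde convolution $\sum_{t\ge 0}\binom{a}{t}\binom{b}{c-t}=\binom{a+b}{c}$, yields the identity (\ref{E:Kgiam}) for stable Grothendieck polynomials, which then specializes to $K(X)$. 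To close your gap you must either genuinely construct the involution (equivalently, rederive this Jacobi--Trudi-type identity from Lenart's Pieri rule, a nontrivial combinatorial theorem in its own right) or, more economically, import these two known results as the paper does and finish with the same quantum-to-classical observation you already have.
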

\begin{proof}
  Let $\groth_\lambda$ denote the stable Grothendieck polynomial for
  the partition $\lambda$, see \cite{buch:littlewood-richardson}.  The
  $K$-theoretic Jacobi-Trudi formula of
  \cite[Thm.~6.1]{buch:grothendieck} states that
\[ \groth_{a,\mu} = \groth_a \cdot \groth_\mu + \sum_{s \geq 1, t \geq 0}
   (-1)^s \binom{s-1+t}{t} \groth_{a+s+t}\cdot \groth_{\mu \sslash (1^s)} \,,
\]
where $\groth_{\mu\sslash(1^s)} = \sum_\nu d^\mu_{(1^s),\nu}
\groth_\nu$ is defined in terms of the coproduct coefficients of the
ring of stable Grothendieck polynomials.  According to
\cite[Cor.~7.1]{buch:littlewood-richardson} we have
\[ \groth_{\mu \sslash (1^s)} = \sum_\nu (-1)^{s-|\mu/\nu|}
   \binom{c(\nu/\wh\mu)}{s-|\mu/\nu|} \groth_\nu
\]
where the sum is over all partitions $\nu \subset \mu$ such that
$\mu/\nu$ is a vertical strip, and $c(\nu/\wh\mu)$ is the number of
non-empty columns in the skew diagram $\nu/\wh\mu$.

By using the identity $\sum_{t \geq 0} \binom{a}{t} \binom{b}{c-t} =
\binom{a+b}{c}$, these formulas combine to give the identity
\begin{equation} \label{E:Kgiam}
  \groth_{a,\mu} = \sum_{p \geq a, \nu \subset \mu} (-1)^{|\mu/\nu|} 
  \binom{p-a-1+c(\nu/\wh\mu)}{p-a-|\mu/\nu|} \groth_p \cdot \groth_\nu \,,
\end{equation}
where the sum is over all integers $p \geq a$ and partitions $\nu$
contained in $\mu$ such that $\mu/\nu$ is a vertical strip.

Since the stable Grothendieck polynomials represent $K$-theoretic
Schubert classes on Grassmannians, we may replace each stable
Grothendieck polynomial $\groth_\lambda$ in (\ref{E:Kgiam}) by the
corresponding class $\cO_\lambda$ in $K(X)$.  Finally, since the
partitions $\nu$ in (\ref{E:Kgiam}) satisfy $\ell(\nu) \leq \ell(\mu)
\leq m-1$, it follows from Theorem~\ref{T:qkpieri} that the ordinary
$K$-theory product $\cO_p \cdot \cO_\nu$ agrees with the quantum
product $\cO_p \star \cO_\nu$.  The theorem follows from this.
\end{proof}

\begin{cor}
  The class $\cO_\lambda \in \QK(X)$ can be expressed as a polynomial
  $P_\lambda = P_\lambda(\cO_1, \dots, \cO_k)$ in the special classes
  $\cO_i$.  The coefficients of this polynomial are integers, and each
  monomial involves at most $\ell(\lambda)$ special classes.
\end{cor}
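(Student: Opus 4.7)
The plan is to prove this corollary by strong induction on the length $\ell = \ell(\lambda)$, with the preceding Giambelli theorem doing essentially all the work. The base cases $\ell(\lambda) = 0$ and $\ell(\lambda) = 1$ are trivial: in the first, $\cO_\emptyset = 1$ is the empty product of special classes, and in the second, $\cO_\lambda = \cO_{\lambda_1}$ is already a single special class (with $\lambda_1 \leq k$ automatic from $\lambda \subset (k)^m$).

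For the inductive step, fix $\lambda$ with $\ell \geq 2$ and set $a = \lambda_1$, $\mu = (\lambda_2,\dots,\lambda_\ell)$. Since $\lambda$ indexes a Schubert class we have $\mu_1 = \lambda_2 \leq \lambda_1 = a \leq k$ and $0 < \ell(\mu) = \ell - 1 \leq m - 1 < m$, so the hypotheses of the Giambelli theorem are satisfied. Applying it expresses $\cO_\lambda$ as a $\Z$-linear combination of quantum products $\cO_p \star \cO_\nu$, where $p \geq a$ and $\nu \subset \mu$; the binomial coefficients are integers, and since $\cO_p = 0$ in $K(X) \subset \QK(X)$ for $p > k$, the sum is effectively finite and involves only special classes among the generators $\cO_1, \dots, \cO_k$. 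Each $\nu$ appearing in the sum satisfies $\ell(\nu) \leq \ell(\mu) = \ell - 1$, so the inductive hypothesis produces an integer polynomial $P_\nu = P_\nu(\cO_1,\dots,\cO_k)$ with each monomial involving at most $\ell - 1$ factors such that $\cO_\nu = P_\nu(\cO_1,\dots,\cO_k)$ in $\QK(X)$ under the $\star$-product. Substituting and quantum-multiplying by the additional factor $\cO_p$ yields an integer polynomial expression for $\cO_\lambda$ whose monomials each involve at most $\ell$ of the special classes.

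There is no real obstacle beyond bookkeeping: the integrality of coefficients is transported from the binomial weights in the Giambelli theorem and the inductive hypothesis, while the bound on the number of special classes per monomial follows from the observation that the recursion strips off exactly one row at a time. Note that we do not need any control over the $q$-powers produced when the iterated $\star$-products are expanded; the claim is only that the formal polynomial $P_\lambda \in \Z[x_1,\dots,x_k]$ (evaluated in $\QK(X)$ via the quantum product) represents $\cO_\lambda$, and all $q$-contributions are absorbed into the intrinsic structure of $\star$ rather than into the polynomial coefficients.
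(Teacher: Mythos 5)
Your proof is correct and follows essentially the same route as the paper: induction on $\ell(\lambda)$, defining $P_\lambda$ recursively via the Giambelli theorem applied with $a=\lambda_1$ and $\mu=(\lambda_2,\dots,\lambda_\ell)$, with the binomial coefficients supplying integrality and the single factor $\cO_p$ per step giving the bound of $\ell(\lambda)$ special classes per monomial. The only cosmetic difference is that the paper restricts the Giambelli sum to $\lambda_1 \leq p \leq k$ outright, whereas you discard $p>k$ by noting those classes vanish; this is immaterial.
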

\begin{proof}
  The polynomials $P_\lambda$ can be defined by induction on
  $\ell(\lambda)$ by setting $P_i = \cO_i$ for $1 \leq i \leq k$ and
  \[ P_\lambda = \sum_{p,\nu} (-1)^{|\mu/\nu|}
     \binom{p-\lambda_1-1+c(\nu/\wh\mu)}{p-\lambda_1-|\mu/\nu|}\, 
     \cO_p\, P_\nu
  \]
  when $\ell(\lambda) \geq 2$.  The sum is over $\lambda_1 \leq p \leq
  k$ and partitions $\nu$ contained in $\mu =
  (\lambda_2,\dots,\lambda_m)$ such that $\mu/\nu$ is a vertical
  strip.  Notice that $\ell(\nu) < \ell(\lambda)$, so the polynomials
  $P_\nu$ have already been defined.
\end{proof}

By using the polynomials $P_\lambda$, it becomes straightforward to
compute the structure constants $N^{\nu,d}_{\lambda,\mu}$ of $\QK(X)$.
In fact, we have $\cO_\lambda \star \cO_\mu = P_\lambda(\cO_1,\dots,\cO_k)
\star \cO_\mu$, and the latter product can be computed by letting
$P_\lambda$ act on $\cO_\mu$, with the action determined by the Pieri
formula of Theorem~\ref{T:qkpieri}.  Since multiplication by a single
special class can result in at most the first power of $q$, we deduce
that all exponents of $q$ in the product $\cO_\lambda \star \cO_\mu$ are
smaller than or equal to $\ell(\lambda)$.

\begin{cor} \label{C:qkfinite}
  The structure constant $N^{\nu,d}_{\lambda,\mu}$ is zero when $d >
  \ell(\lambda)$.
\end{cor}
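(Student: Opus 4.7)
The plan is to derive this from the Giambelli formula established in the preceding corollary together with the Pieri formula of Theorem~\ref{T:qkpieri}. By that corollary, the class $\cO_\lambda \in \QK(X)$ equals a polynomial $P_\lambda(\cO_1, \dots, \cO_k)$ in the special classes in which every monomial is a product of at most $\ell(\lambda)$ factors $\cO_i$. Consequently, by associativity of the quantum product,
\[
  \cO_\lambda \star \cO_\mu \;=\; P_\lambda(\cO_1, \dots, \cO_k) \star \cO_\mu,
\]
and the right-hand side may be evaluated by iteratively applying the Pieri rule to $\cO_\mu$.

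Next I would invoke the first assertion of Theorem~\ref{T:qkpieri}, namely that $N^{\nu,d}_{i,\lambda} = 0$ whenever $d \geq 2$. This means a single application of a special class $\cO_i$ to any Schubert class can increase the exponent of $q$ by at most $1$. Iterating this observation across the (at most $\ell(\lambda)$) special-class factors in each monomial of $P_\lambda$, the exponent of $q$ appearing in $\cO_\lambda \star \cO_\mu$ cannot exceed $\ell(\lambda)$. In other words, $N^{\nu,d}_{\lambda,\mu} = 0$ for all $d > \ell(\lambda)$, as claimed.

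There is no real obstacle here since all the substantive work has already been done: the Pieri formula supplies the bound of $1$ per special class, and the Giambelli corollary supplies the bound of $\ell(\lambda)$ on the number of special classes required. The only point worth verifying is that the Giambelli identity $\cO_\lambda = P_\lambda(\cO_1,\dots,\cO_k)$ holds in $\QK(X)$ (not merely in $K(X)$), so that the substitution into the quantum product is legitimate; but this is exactly what was proved for $P_\lambda$ in the previous corollary, since its inductive definition used the quantum product $\star$ throughout.
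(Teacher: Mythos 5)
Your argument is correct and is essentially identical to the paper's: the paper likewise deduces the bound by writing $\cO_\lambda \star \cO_\mu = P_\lambda(\cO_1,\dots,\cO_k)\star\cO_\mu$ via the Giambelli corollary and noting that each multiplication by a special class contributes at most one power of $q$ by Theorem~\ref{T:qkpieri}. No gaps; the check that the Giambelli expression holds in $\QK(X)$ is exactly what the preceding corollary provides.
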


The definition of the constants $N^{\nu,d}_{\lambda,\mu}$ in terms of
Gromov-Witten invariants can be turned around to give the identity
\begin{equation} \label{E:NtoGWd}
  I_d(\cO_\lambda, \cO_\mu, \cO_\nu^\vee) \ = \ 
  \sum_{\kappa, 0 \leq e \leq d}
  N^{\kappa,d-e}_{\lambda,\mu}\, I_e(\cO_\kappa,\cO_\nu^\vee) \,.
\end{equation}
Here we have used the convention that a two-point Gromov-Witten
invariant of degree zero is defined by the Poincar\'e pairing
$I_0(\alpha_1, \alpha_2) = \euler{X}(\alpha_1 \cdot \alpha_2)$.  By
linearity we similarly have that
\begin{equation} \label{E:NtoGWi}
  I_d(\cO_\lambda, \cO_\mu, \cO_\nu) \ = \ \sum_{\kappa,0\leq e\leq d}
  N^{\kappa,d-e}_{\lambda,\mu}\, I_e(\cO_\kappa,\cO_\nu) \,.
\end{equation}

Notice that all the required structure constants can be obtained by
computing the single product $\cO_\lambda \star \cO_\mu$ in $\QK(X)$.
Furthermore, it follows from Corollary~\ref{C:twopoint} below that
each two-point invariant $I_d(\cO_\kappa,\cO_\nu^\vee)$ is equal to
one if $\nu$ is obtained from $\kappa$ by removing its first $d$ rows
and columns, and is zero otherwise.  By using
\cite[Thm.~4.2.1]{brion:lectures} it also follows that
$I_d(\cO_\kappa,\cO_\nu)$ is equal to one if $\kappa_i + \nu_{m+d+1-i}
\leq k+d$ for $d < i \leq m$, and is zero otherwise.  The identities
(\ref{E:NtoGWd}) and (\ref{E:NtoGWi}) therefore give alternative and
very practical ways to compute $K$-theoretic Gromov-Witten invariants
on Grassmannians.

\begin{example}
  We compute the quantum product $\cO_{2,1} \star \cO_{2,1}$ on $X =
  \Gr(2,4)$.  The Giambelli formula gives $\cO_{2,1} = \cO_1 \star \cO_2$,
  so the product can be obtained as $\cO_{2,1} \star \cO_{2,1} = \cO_1 \star
  (\cO_2 \star \cO_{2,1}) = \cO_1 \star q\, \cO_1 = q\, \cO_{2} + q\,
  \cO_{1,1} - q\, \cO_{2,1} \in \QK(X)$.  Using (\ref{E:NtoGWd}) we
  obtain from this that $I_1(\cO_{2,1}, \cO_{2,1}, \cO_{2,1}^\vee) =
  N^{(2,1),1}_{(2,1),(2,1)} = -1$ and (\ref{E:NtoGWi}) gives
  $I_1(\cO_{2,1}, \cO_{2,1}, \cO_1) = 1 + 1 - 1 = 1$.  The full
  multiplication table for $\QK(\Gr(2,4))$ looks as follows.
\begin{align*}
\cO_{1} \star \cO_{1} &= \cO_{1,1}+\cO_{2}-\cO_{2,1} &
\cO_{1,1} \star \cO_{1} &= \cO_{2,1} \\
\cO_{1,1} \star \cO_{1,1} &= \cO_{2,2} &
\cO_{2} \star \cO_{1} &= \cO_{2,1} \\
\cO_{2} \star \cO_{1,1} &= q &
\cO_{2} \star \cO_{2} &= \cO_{2,2} \\
\cO_{2,1} \star \cO_{1} &= \cO_{2,2}+q -q\, \cO_{1} &
\cO_{2,1} \star \cO_{1,1} &= q\, \cO_{1} \\
\cO_{2,1} \star \cO_{2} &= q\, \cO_{1} &
\cO_{2,1} \star \cO_{2,1} &= q\, \cO_{1,1}+q\, \cO_{2}-q\, \cO_{2,1} \\
\cO_{2,2} \star \cO_{1} &= q\, \cO_{1} &
\cO_{2,2} \star \cO_{1,1} &= q\, \cO_{2} \\
\cO_{2,2} \star \cO_{2} &= q\, \cO_{1,1} &
\cO_{2,2} \star \cO_{2,1} &= q\, \cO_{2,1} \\
\cO_{2,2} \star \cO_{2,2} &= q^2
\end{align*}
\end{example}

We finally pose the following conjecture, which has been verified for
all Grassmannians $\Gr(m,n)$ with $n \leq 13$.

\begin{conj}
  The structure constants $N^{\nu,d}_{\lambda,\mu}$ have alternating
  signs in the sense that $\,(-1)^{|\nu|+nd-|\lambda|-|\mu|}\,
  N^{\nu,d}_{\lambda,\mu} \,\geq\, 0$.
\end{conj}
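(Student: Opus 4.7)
The plan is to reduce the sign conjecture to the positivity of iterated Pieri products via the Giambelli formula of this section. Three preliminary observations support this strategy. First, at $d=0$ the conjecture specializes to Brion's positivity theorem for the ordinary $K$-theory of a Grassmannian, which provides the base case. Second, both Lenart's classical Pieri rule (\ref{E:lenart}) and the quantum Pieri rule of Theorem~\ref{T:qkpieri} are manifestly compatible with the prediction: stripping the sign $(-1)^e$ from $N^{\nu,1}_{i,\lambda}$ leaves the non-negative binomial coefficient $\binom{r}{e}$, and the $d=0$ check is analogous. Third, the predicted sign is multiplicative under associative composition, in the sense that if $\cO_\lambda \star \cO_\mu = \sum c^{\nu,d_1}_{\lambda,\mu}\, q^{d_1} \cO_\nu$ and $\cO_\nu \star \cO_\sigma = \sum c^{\tau,d_2}_{\nu,\sigma}\, q^{d_2} \cO_\tau$ each satisfy the predicted sign pattern, then every individual summand of the iterated product $(\cO_\lambda \star \cO_\mu) \star \cO_\sigma$ already carries the predicted sign $(-1)^{|\tau|+n(d_1+d_2)-|\lambda|-|\mu|-|\sigma|}$. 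Thus the conjecture would follow by induction on $\ell(\lambda)$: express $\cO_\lambda$ through the Giambelli polynomial $P_\lambda$ as an integer combination of iterated special-class products, then apply Theorem~\ref{T:qkpieri} repeatedly.

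The main obstacle is that the inductive step requires more than each individual summand having the correct sign: it requires that the integer combination assembled through $P_\lambda$ does so \emph{after all cancellations}. Since $P_\lambda$ has signed coefficients, positive and negative contributions could in principle collide and invert the expected sign. Overcoming this would require a cancellation-free combinatorial model for $(-1)^{|\nu|+nd-|\lambda|-|\mu|} N^{\nu,d}_{\lambda,\mu}$, for instance a quantum analogue of the Knutson--Tao--Vakil puzzles that compute products on two-step flag varieties, or a signed tableau rule generalizing Buch's Littlewood--Richardson rule for $K(X)$.

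A complementary geometric approach would use Corollary~\ref{C:twostep} together with a Brion-style positivity statement on $Y_d$: this would yield the predicted sign for the three-point $K$-theoretic Gromov--Witten invariants $I_d(\cO_\lambda, \cO_\mu, \cO_{\nu^\vee})$, and one would then propagate the positivity to $N^{\nu,d}_{\lambda,\mu}$ inductively through the recursion (\ref{E:QKrecurrence}), using the explicit form of the two-point invariants $I_e(\cO_\kappa, \cO_\nu^\vee)$ (each equal to $0$ or $1$) described after Corollary~\ref{C:qkfinite}. An even more direct route is to interpret $N^{\nu,d}_{\lambda,\mu}$ via Remark~\ref{R:eulerdiff} as a difference of two Euler characteristics on $\Mb_{0,3}(X,d)$ and on the boundary divisor $\mathcal{D}$, and to show by a transversality and rational-singularities argument that the two contributions combine into a quantity of the predicted sign. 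The principal difficulty in both routes is the signed interference: unlike the classical case, a quantum $K$-theoretic structure constant is not the Euler characteristic of a single Cohen--Macaulay subvariety, so any positivity proof must either cancel the interference combinatorially or bound it geometrically.
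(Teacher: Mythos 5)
There is nothing to compare your proposal against: in the paper this statement is posed as a \emph{conjecture}, supported only by computer verification for all $\Gr(m,n)$ with $n \leq 13$, and no proof is given (nor was one known at the time). So the question is only whether your argument closes the gap on its own, and it does not --- as you yourself acknowledge. Your preliminary observations are correct: the $d=0$ case is the known alternating-sign property of ordinary $K$-theoretic structure constants, the Pieri rule of Theorem~\ref{T:qkpieri} is manifestly sign-compatible, and the predicted sign is indeed multiplicative, so that in an iterated product of special classes all contributions to a fixed $(\tau,d)$ carry the same predicted sign and no cancellation can occur at that stage. But this only isolates, rather than resolves, the real difficulty: the Giambelli polynomial $P_\lambda$ has coefficients of both signs, so when $\cO_\lambda \star \cO_\mu$ is assembled as $P_\lambda(\cO_1,\dots,\cO_k)\star\cO_\mu$, monomials of $P_\lambda$ with opposite signs can contribute to the same $(\nu,d)$ and the sign of the sum is not controlled. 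Your second route has the analogous unresolved step: Remark~\ref{R:eulerdiff} expresses $N^{\nu,d}_{\lambda,\mu}$ as a difference of two Euler characteristics, and even granting a Brion-type positivity statement for each piece (which is itself not established in the quantum setting of $\Mb_{0,3}(X,d)$ and $\mathcal{D}$), no mechanism is offered that forces the difference to have the predicted sign; the recursion (\ref{E:QKrecurrence}) suffers from exactly the same subtraction problem, since the two-point invariants $I_e(\cO_\kappa,\cO_\nu^\vee)$ being $0$ or $1$ does not prevent sign interference between the terms being subtracted.

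In short, what you have written is an accurate survey of plausible attack routes together with a correct diagnosis of why each one stalls --- a cancellation-free combinatorial model (e.g.\ a quantum puzzle or tableau rule) or a genuinely new geometric positivity input would be needed --- but it is not a proof, and the paper claims none.
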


Lenart and Maeno have posed a similar conjecture for the quantum
$K$-theory of complete flag varieties $G/B$
\cite{lenart.maeno:quantum}.  Their conjecture is also based on
computer evidence, although these computations are based on other
conjectures.

We note that the Gromov-Witten invariants $I_d(\cO_\lambda, \cO_\mu,
\cO_\nu^\vee)$ do not have alternating signs for $d>0$, although the
degree zero invariants $I_0(\cO_\lambda, \cO_\mu, \cO_\nu^\vee) =
N^{\nu,0}_{\lambda,\mu}$ do have alternating signs
\cite{buch:littlewood-richardson}.  A concrete example on $\Gr(2,4)$
is $I_d(\cO_2, \cO_2, \cO_1^\vee) = I_d(\cO_2, \cO_{2,1},
  \cO_1^\vee) = 1$.  The invariants $I_d(\cO_\lambda, \cO_\mu,
  \cO_\nu)$ also do not have predictable signs even for $d=0$, see
\cite[\S 8]{buch:littlewood-richardson}.

\begin{example} \label{E:nonrat3pt}
  On $X = \Gr(4,8)$ we have $I_2(\cO_{4,3,2,1}, \cO_{4,3,2,1},
  \cO_{4,3,2,1}) = 2$.  The corresponding Gromov-Witten variety has
  dimension 2 and is not rational.
\end{example}

\subsection{Symmetry and duality}
\label{ssec:symdual}

As a further application of our Pieri rule, we prove some nice
properties of the ring $\QK(X)$.  Our first result shows that the
structure constants $N^{\nu^\vee,d}_{\lambda,\mu}$ satisfy
$S_3$-symmetry, i.e.\ these constants are invariant under arbitrary
permutations of the partitions $\lambda, \mu, \nu$.  We remark that
this symmetry is not at all clear from geometry.  For example, the two
terms in the expression for the structure constants in
Remark~\ref{R:eulerdiff} do not satisfy $S_3$-symmetry individually.

\begin{thm} \label{T:s3sym}
  For any degree $d$ and partitions $\lambda, \mu, \nu$ contained in
  the $m\times k$ rectangle we have $N^{\nu,d}_{\lambda,\mu} =
  N^{\mu^\vee,d}_{\lambda,\nu^\vee}$.
\end{thm}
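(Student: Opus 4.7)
The plan is to reduce the claim to a symmetry of Pieri-type coefficients and then propagate it to all Schubert classes via the Giambelli formula and commutativity of the quantum product. Let $M(\alpha)\colon \QK(X)\to\QK(X)$ denote the $\Z\llbracket q\rrbracket$-linear operator of quantum multiplication by $\alpha$; its matrix in the Schubert basis has entries $[M(\alpha)]_{\nu,\lambda}(q)=\sum_d N^{\nu,d}_{\alpha,\lambda}\,q^d$. Let $S$ be the permutation matrix of the involution $\lambda\mapsto\lambda^\vee$, so $S^2=I$ and $S^T=S$. A short calculation shows that the identity $N^{\nu,d}_{\alpha,\lambda}=N^{\lambda^\vee,d}_{\alpha,\nu^\vee}$ for all $\nu,\lambda,d$ is equivalent to the matrix $S\cdot M(\alpha)$ being symmetric.

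The first step is to verify this symmetry for $\alpha=\cO_i$ a special class. By Theorem~\ref{T:qkpieri} the coefficients $N^{\nu,d}_{i,\lambda}$ vanish for $d\geq 2$. For $d=0$ one recovers the classical Schubert constants $c^\nu_{i,\lambda}=\chi(\cO_i\cdot\cO_\lambda\cdot\cO_\nu^\vee)$, and the identity $c^\nu_{i,\lambda}=c^{\lambda^\vee}_{i,\nu^\vee}$ follows from Poincar\'e duality together with the relation $t\cdot\cO_\lambda=\cO_{\lambda^\vee}^\vee$ and commutativity of the ordinary product. For $d=1$, one checks the identity directly from the explicit Pieri formula: the exponent $e=|\nu|+n-i-|\lambda|$ is manifestly invariant under $(\lambda,\nu)\mapsto(\nu^\vee,\lambda^\vee)$, and a combinatorial verification shows that the outer-rim condition on $\lambda/\nu$ and the row count $r$ appearing in the binomial coefficient $\binom{r}{e}$ are likewise preserved.

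The symmetry extends to all Schubert classes via the Giambelli formula, which expresses each $\cO_\mu$ as a polynomial $P_\mu(\cO_1,\ldots,\cO_k)$ with integer coefficients in the special classes. Consequently $M(\cO_\mu)=P_\mu(M(\cO_1),\ldots,M(\cO_k))$, and by commutativity of $\star$ the matrices $M(\cO_i)$ pairwise commute. The key algebraic observation is that if $A,B$ are commuting matrices with $SA$ and $SB$ symmetric, then so is $S(AB)$: from $A^T=SAS$ and $B^T=SBS$ one obtains $(SAB)^T=B^TA^TS=SBA=SAB$, the last equality by $AB=BA$. Together with closure under sums and scalar multiples, this shows inductively that $S\cdot M(\cO_\mu)$ is symmetric, yielding $N^{\nu,d}_{\mu,\lambda}=N^{\lambda^\vee,d}_{\mu,\nu^\vee}$ for every $\mu$.

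Finally, combining this with commutativity of $\star$ (which gives $N^{\nu,d}_{\lambda,\mu}=N^{\nu,d}_{\mu,\lambda}$), alternating applications of the two identities yield
\[ N^{\nu,d}_{\lambda,\mu} = N^{\lambda^\vee,d}_{\mu,\nu^\vee} = N^{\lambda^\vee,d}_{\nu^\vee,\mu} = N^{\mu^\vee,d}_{\nu^\vee,\lambda} = N^{\mu^\vee,d}_{\lambda,\nu^\vee}, \]
which is the stated theorem. The main technical obstacle is the combinatorial check in the $d=1$ case of Step~1, where one must carefully match the outer-rim descriptions on $(\lambda,\nu)$ and $(\nu^\vee,\lambda^\vee)$ and verify equality of the row counts; the remaining ingredients are formal consequences of Pieri, Giambelli, and commutativity of $\star$.
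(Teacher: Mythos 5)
Your proposal is correct and takes essentially the same approach as the paper: the symmetry is first checked for the special classes $\cO_i$ from the Pieri formula (the paper justifies the $d=1$ case with exactly the outer-rim/row-index duality facts that you defer to a ``combinatorial verification''), and is then propagated to all $\cO_\lambda$ via the Giambelli formula, using that the classes satisfying the symmetry form a subalgebra --- your matrix identity $(S\,AB)^T = S\,AB$ for commuting $A,B$ with $SA$, $SB$ symmetric is precisely the paper's coefficient computation showing its set $S$ is closed under $\star$. The only cosmetic differences are the matrix language, your explicit treatment of the $d=0$ case via Poincar\'e duality and the $t$-twist (left implicit in the paper), and your final four-step chain, which is redundant since the operator symmetry with multiplier $\lambda$ already gives $N^{\nu,d}_{\lambda,\mu}=N^{\mu^\vee,d}_{\lambda,\nu^\vee}$ directly.
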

\begin{proof}
  This identity is immediate from Theorem~\ref{T:qkpieri} if $\lambda =
  (p)$ has a single part.  In fact, $\nu$ is obtained by removing
  boxes from the outer rim of $\mu$ if and only if $\mu^\vee$ is
  obtained by removing boxes from the outer rim of $\nu^\vee$; and
  $\nu$ contains a box from the $i$-th row of the outer rim of $\mu$
  if and only if $\mu^\vee$ contains a box from the $(m-i)$-th row of
  the outer rim of $\nu^\vee$ ($1 \leq i \leq m-1$).
  
  For a partition $\nu$ and class $\alpha \in \QK(X)$, let $\langle
  \alpha, \cO_\nu \rangle \in \Z\llbracket q \rrbracket$ denote the
  coefficient of $\cO_\nu$ in the $\Z\llbracket q \rrbracket$-linear
  expansion of $\alpha$.  It is enough to show that the set
  \[ S = \{ \alpha \in \QK(X) \mid \langle \alpha \star \cO_\mu, \cO_\nu
  \rangle = \langle \alpha \star \cO_{\nu^\vee}, \cO_{\mu^\vee}
  \rangle \text{ for all partitions $\mu$ and $\nu$} \}
  \]
  is equal to $\QK(X)$.  This follows because $S$ is a $\Z\llbracket q
  \rrbracket$-submodule of $\QK(X)$ that contains the special classes
  $\cO_1, \dots, \cO_k$ and is closed under multiplication.  In fact,
  for $\alpha_1, \alpha_2 \in S$ we have $\langle \alpha_1 \star
  \alpha_2 \star \cO_\mu, \cO_\nu \rangle = \sum_\lambda \langle
  \alpha_1 \star \cO_\mu, \cO_\lambda \rangle \, \langle \alpha_2
  \star \cO_\lambda, \cO_\nu \rangle = \sum_\lambda \langle \alpha_2
  \star \cO_{\nu^\vee},\cO_{\lambda^\vee}\rangle\, \langle \alpha_1
  \star \cO_{\lambda^\vee},\cO_{\mu^\vee}\rangle= \langle \alpha_2
  \star \alpha_1 \star \cO_{\nu^\vee}, \cO_{\mu^\vee}\rangle$, so
  $\alpha_1 \star \alpha_2 \in S$.
\end{proof}

A skew diagram $\nu/\lambda$ is called a {\em rook strip\/} if each
row and column contains at most one box.  We need the following
special case of the Pieri rule.

\begin{lemma} \label{L:multbox}
  For any partition $\mu$ contained in the $m \times k$
  rectangle we have
  \[ \cO_1 \star \cO_\mu = \sum_\lambda (-1)^{|\lambda/\mu|}\, \cO_\lambda
  + q \sum_\nu (-1)^{|\nu/\wh{\wb{\mu}}|}\, \cO_\nu \,.
  \]
  Here $\wh{\wb{\mu}}$ denotes the result of removing the first row
  and first column from $\mu$.  The first sum is over all partitions
  $\lambda \supsetneq \mu$ for which $\lambda/\mu$ is a rook strip.
  The second sum is empty unless $\mu_1 = k$ and $\ell(\mu) = m$, in
  which case it includes all partitions $\nu \supset \wh{\wb{\mu}}$
  for which $\nu_1 = k-1$, $\ell(\nu) = m-1$, and $\nu/\wh{\wb{\mu}}$
  is a rook strip.
\end{lemma}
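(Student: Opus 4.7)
The plan is to derive this identity directly as the $i=1$ specialization of Theorem~\ref{T:qkpieri}, combined with the classical Lenart formula~(\ref{E:lenart}).

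First, for the classical ($q^0$) part, I apply (\ref{E:lenart}) with $i=1$: the coefficient of $\cO_\lambda$ is $(-1)^{|\lambda/\mu|-1}\binom{r(\lambda/\mu)-1}{|\lambda/\mu|-1}$, supported on horizontal strips $\lambda/\mu$. Since the number of non-empty rows of a skew shape never exceeds its number of boxes, this binomial vanishes unless every row of $\lambda/\mu$ contains exactly one box; combined with the horizontal-strip condition (no column contains two boxes), this is precisely the rook-strip condition. When that holds the binomial equals $1$ and only the sign survives, matching the first sum in the claim.

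Next, the vanishing of the $q^d$-contributions for $d\ge 2$ is immediate from Theorem~\ref{T:qkpieri}.

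For the $q^1$ part, I specialize Theorem~\ref{T:qkpieri} to $i=1$: this forces $\ell(\mu)=m$ and realizes the contributing $\nu$'s as those obtained from $\mu$ by deleting a subset $S$ of the outer rim of $\mu$ that meets every row, with coefficient $(-1)^e\binom{r}{e}$, where $e=|\nu|+n-1-|\mu|$ and $r\le m-1$. My remaining task is to re-encode this data in terms of the partition $\wh{\wb{\mu}}$. The interior $\mu\setminus(\text{outer rim of }\mu)$ is precisely the diagram of $\wh{\wb{\mu}}$ (sitting inside $\mu$), so deleting the entire rim produces $\wh{\wb{\mu}}$ and any admissible $\nu$ lies between $\wh{\wb{\mu}}$ and $\mu$. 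An inspection then identifies the rook-strip condition on $\nu/\wh{\wb{\mu}}$ with the equality $r=e$ (the binomial analogue of the degree-0 argument above), and shows that the constraints $\mu_1=k$, $\ell(\mu)=m$, together with $\nu_1\le k-1$ and $\ell(\nu)\le m-1$, emerge as exactly the compatibility requirements forced when the binomial is nonzero. Once $r=e$, one has $\binom{r}{e}=1$ and the sign becomes $(-1)^{|\nu/\wh{\wb{\mu}}|}$.

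The main obstacle is this final combinatorial translation: pinning down the bijection between ``admissible box-removal data on the outer rim of $\mu$'' and ``partitions $\nu$ with $\wh{\wb{\mu}}\subseteq\nu$ and $\nu/\wh{\wb{\mu}}$ a rook strip'', and verifying that the statistics $e$ and $r$ pass over to the size and (shifted) row-count of $\nu/\wh{\wb{\mu}}$ respectively. This is a purely combinatorial unwinding of definitions, but must be performed carefully to keep all boundary cases (minimal removals, empty $\nu$, and the excluded bottom row of the rim) straight.
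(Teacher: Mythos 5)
Your route coincides with the paper's: Lemma~\ref{L:multbox} is stated there without proof precisely as a specialization of Theorem~\ref{T:qkpieri} together with Lenart's rule (\ref{E:lenart}) at $i=1$, and your outline (rook strips from the vanishing of $\binom{r-1}{|D|-1}$ unless $r=|D|$, vanishing for $d\ge 2$, and the $e=r$ analysis of $(-1)^e\binom{r}{e}$ forcing $\mu_1=k$, $\ell(\mu)=m$, $\ell(\nu)\le m-1$ and the rook-strip shape $\nu/\wh{\wb{\mu}}$ with coefficient $(-1)^{|\nu/\wh{\wb{\mu}}|}$) is exactly the intended verification, and it does go through. One caveat: what this computation actually produces is the sign $(-1)^{|\lambda/\mu|-1}$ in the classical sum and the conditions $\nu_1\le k-1$, $\ell(\nu)\le m-1$ (allowing $\nu=\wh{\wb{\mu}}$) in the quantum sum --- the form consistent with the paper's $\Gr(2,4)$ table (e.g.\ the $+q$ term in $\cO_{2,1}\star\cO_1$) and with the proof of Theorem~\ref{T:qkdual} --- so your claim that the output ``matches the first sum in the claim'' glosses over the fact that the statement as printed (sign $(-1)^{|\lambda/\mu|}$ and the equalities $\nu_1=k-1$, $\ell(\nu)=m-1$) is what needs correcting, not your derivation.
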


Let $\chi^q : \QK(X) \to \Z\llbracket q \rrbracket$ denote the
$\Z\llbracket q \rrbracket$-linear extension of the Euler
characteristic, which maps each Schubert class $\cO_\lambda$ to $1$.
Define the element $t_q = \frac{1-\cO_1}{1-q} \in \QK(X)$.  We finally
show that $t_q \star \cO_{\lambda^\vee}$ is the $\Z\llbracket q
\rrbracket$-linear dual basis element of $\cO_\lambda$.

\begin{thm} \label{T:qkdual}
  For any partitions $\lambda$ and $\nu$ contained in the $m\times k$
  rectangle we have $\chi^q(\cO_\lambda \star t_q \star
  \cO_{\nu^\vee}) = \delta_{\lambda,\nu}$.
\end{thm}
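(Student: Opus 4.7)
The plan is to reduce the theorem to a simpler identity that we shall call the \emph{Key Lemma}:
\[
\chi^q\bigl((1-\cO_1)\star\cO_\mu\bigr) \;=\; (1-q)\,\delta_{\mu,(k)^m}
\quad\text{for every partition } \mu\subseteq(k)^m.
\]
Clearing the denominator in $t_q = (1-\cO_1)/(1-q)$, the statement of the theorem becomes $\chi^q(\cO_\lambda\star(1-\cO_1)\star\cO_{\nu^\vee}) = (1-q)\,\delta_{\lambda,\nu}$, and this is what we will prove.

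Granted the Key Lemma, the deduction runs as follows. Expand $\cO_\lambda\star\cO_{\nu^\vee} = \sum_{\mu,d\ge 0}N^{\mu,d}_{\lambda,\nu^\vee}\,q^d\,\cO_\mu$, use commutativity of $\star$ to bring $(1-\cO_1)$ to the outside, and apply $\chi^q$ term-by-term. Only the terms with $\mu=(k)^m$ survive, so the left-hand side reduces to
\[
(1-q)\sum_{d\ge 0} N^{(k)^m,d}_{\lambda,\nu^\vee}\,q^d.
\]
The $S_3$-symmetry of Theorem~\ref{T:s3sym}, applied with $\nu\leftarrow(k)^m$ and $\mu\leftarrow\nu^\vee$, yields $N^{(k)^m,d}_{\lambda,\nu^\vee} = N^{\nu,d}_{\lambda,\emptyset}$. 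Since $\cO_\emptyset=[\cO_X]$ is the multiplicative unit of $\QK(X)$, this structure constant equals $\delta_{\lambda,\nu}\,\delta_{d,0}$, and the sum collapses to the desired $(1-q)\,\delta_{\lambda,\nu}$.

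For the Key Lemma itself, we will apply Lemma~\ref{L:multbox} to expand $\cO_1\star\cO_\mu$ in the Schubert basis and then compute $\chi^q$ as the sum of all Schubert coefficients. Writing $A(\mu)$ for the set of addable corners of $\mu$ within the ambient rectangle $(k)^m$, non-empty rook-strip extensions $\mu\subsetneq\lambda\subseteq(k)^m$ correspond bijectively to non-empty subsets $T\subseteq A(\mu)$, with $|\lambda/\mu|=|T|$. The signed contribution of the first term of Lemma~\ref{L:multbox} is therefore an inclusion-exclusion sum that collapses to $[A(\mu)\ne\emptyset] = [\mu\ne(k)^m]$ by the elementary identity $\sum_{T\subseteq A(\mu)}(-1)^{|T|}=0$ (valid whenever $A(\mu)\ne\emptyset$). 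A parallel inclusion-exclusion over the addable corners of $\wh{\wb\mu}$ inside the smaller rectangle $(k-1)^{m-1}$ shows that the $q$-term of Lemma~\ref{L:multbox} contributes exactly $q\,[\mu=(k)^m]$. Combining and subtracting from $\chi^q(\cO_\mu)=1$ gives $\chi^q((1-\cO_1)\star\cO_\mu) = (1-q)\,[\mu=(k)^m]$.

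The main obstacle will be the bijective/combinatorial content of the Key Lemma: one must verify that the partitions appearing in each of the two sums of Lemma~\ref{L:multbox} really are parameterized by subsets of the appropriate addable-corner set, and track the signs through correctly. Once these parameterizations are in place the result follows from two applications of the identity $\sum_T(-1)^{|T|}=0$, and no further geometric input is required beyond the Pieri formula and the $S_3$-symmetry already established.
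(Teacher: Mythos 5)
Your proposal is correct and follows essentially the same route as the paper: the paper's proof establishes exactly your Key Lemma $\chi^q((1-\cO_1)\star \cO_\mu) = (1-q)\,\delta_{\mu,(k)^m}$ from Lemma~\ref{L:multbox}, then expands $\cO_\lambda \star \cO_{\nu^\vee}$ in the Schubert basis, applies $\chi^q((1-\cO_1)\star\,\cdot\,)$ term by term, and uses the $S_3$-symmetry of Theorem~\ref{T:s3sym} in precisely the way you do to reduce to $N^{\nu,d}_{\lambda,\emptyset}=\delta_{\lambda,\nu}\delta_{d,0}$. The only difference is that you spell out the inclusion-exclusion over addable corners behind the Key Lemma, which the paper leaves implicit; your reading of the two sums in Lemma~\ref{L:multbox} (signs and indexing) is the one consistent with Theorem~\ref{T:qkpieri} and the $\Gr(2,4)$ multiplication table, so that step is sound.
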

\begin{proof}
  Let $R = (k)^m$ denote the $m \times k$ rectangle considered as a
  partition.  It follows from Lemma~\ref{L:multbox} that $\chi^q((1-\cO_1)
  \star \cO_\lambda) = (1-q)\, \delta_{\lambda,R}$.  We deduce that
\[\begin{split}
  \chi^q((1-\cO_1)\star \cO_\lambda \star \cO_{\nu^\vee}) 
  &= \sum_{\mu,d} N^{\mu,d}_{\lambda,\nu^\vee}\, q^d\, 
     \chi^q((1-\cO_1) \star \cO_\mu)
  = \sum_d N^{R,d}_{\lambda,\nu^\vee}\, q^d\, (1-q) \\
  &= \sum_d N^{\nu,d}_{\lambda,\emptyset}\, q^d\, (1-q) 
  = \delta_{\lambda,\nu}\, (1-q)
  \,,
\end{split}\]
as required.  The third equality follows from Theorem~\ref{T:s3sym}.
\end{proof}

It follows from Theorem~\ref{T:qkdual} that the structure constants of
$\QK(X)$ can be expressed in the following form, which makes the
$S_3$-symmetry apparent:

\begin{equation} \label{E:sym3euler}
  \sum_{d \geq 0} N^{\nu^\vee,d}_{\lambda,\mu}\, q^d \ = \ 
  \chi^q(t_q \star \cO_\lambda \star \cO_\mu \star \cO_\nu) \,.
\end{equation}

\begin{example}
  On $X = \Gr(5,10)$ we have $\chi^q(t_q \star
  \cO_{(5,4,3,2,1)}^{\,3}) = 14\, q^2 + q^3$, so the sum
  (\ref{E:sym3euler}) may have more than one non-zero term.  However,
  the sum is always finite by Corollary~\ref{C:qkfinite}.
\end{example}

In the special case of ordinary $K$-theory, the $S_3$-symmetry of the
structure constants $N^{\nu^\vee,0}_{\lambda,\mu}$ follows from a
Puzzle version of the Littlewood-Richardson rule for $K(X)$
\cite[Thm.~4.6]{vakil:geometric}.  The $q=0$ case of
(\ref{E:sym3euler}) was proved in \cite[\S 2]{buch:combinatorial},
which provides an alternative proof.  
\ignore{
  If $X = G/P$ is any homogeneous space defined by a maximal parabolic
  subgroup $P$, then the basis of $K(X)$ consisting of Schubert
  structure sheaves can be dualized by multiplying all classes by a
  constant element (the ideal sheaf of the unique Schubert divisor).
  Theorem~\ref{T:qkdual} shows that this phenomenon carries over to
  quantum $K$-theory for Grassmannians of type A.
}
Theorem~\ref{T:qkdual} shows that the phenomenon that the Schubert
basis of $K(X)$ can be dualized by multiplying all structure sheaves
with a constant element carries over to quantum $K$-theory.
It would be interesting to find an explanation in terms of generating
functions.  The $S_3$-symmetry of Theorem~\ref{T:s3sym} might hint
towards the existence of a puzzle rule for the structure constants
$N^{\nu,d}_{\lambda,\mu}$ of $\QK(X)$, but so far we have not been
able to find a working set of puzzle pieces.

\subsection{Equivariant quantum $K$-theory of $\P^1$ and $\P^2$}
\label{ssec:qktproj}

Let $T \subset \GL_n$ be the torus of diagonal matrices, and let
$\cO_\lambda \in K_T(X)$ denote the equivariant class of the Schubert
variety $X_\lambda$ in $X = \Gr(m,n)$ relative to the standard
$T$-stable flag $F_\bull$ defined by $F_i = \C^i \oplus 0^{n-i}$.  The
$T$-equivariant quantum ring $\QK_T(X)$ is obtained by using
equivariant Gromov-Witten invariants in the definition
(\ref{E:QKrecurrence}) of the structure constants
$N^{\nu,d}_{\lambda,\mu}$, where $\cO_\lambda^\vee$ denotes the
equivariant Poincar\'e dual class of $\cO_\lambda$.  We include here
the multiplication tables for the equivariant quantum $K$-theory rings
of $\P^1 = \Gr(1,2)$ and $\P^2 = \Gr(1,3)$.  By Theorem~\ref{T:mainA},
the required Gromov-Witten invariants can be computed in $K_T(\pt)$,
$K_T(\P^1)$, and $K_T(\P^2)$ (see also Theorem~\ref{T:specialgw}
below).  We note that Graham and Kumar have given explicit formulas
for multiplication in $K_T(\P^n)$ \cite[\S 6.3]{graham.kumar:on}.  Let
$\varepsilon_i : T \to \C^*$ be the character defined by
$\varepsilon_i(t_1,\dots,t_n) = t_i$, and write $e^{\varepsilon_i} =
[\C_{\varepsilon_i}] \in K_T(\pt)$.

The multiplicative structure of $\QK_T(\P^1)$ is determined by:
\[ \cO_1 \star \cO_1 \ = \ (1 - e^{\varepsilon_1 - \varepsilon_2})\,
   \cO_1 + e^{\varepsilon_1 - \varepsilon_2}\, q
\]
And the multiplicative structure of $\QK_T(\P^2)$ is determined by:
\[\begin{split}
  \cO_{1}\star \cO_{1} \ &= \ (1-e^{\varepsilon_2-\varepsilon_3})\, \cO_1
    +e^{\varepsilon_2-\varepsilon_3}\, \cO_{2} \\
  \cO_{1}\star \cO_{2} \ &= \ e^{\varepsilon_1-\varepsilon_3}\, q
    +(1-e^{\varepsilon_1-\varepsilon_3})\, \cO_2 \\
  \cO_2\star \cO_2 \ &= \ (1-e^{\varepsilon_1-\varepsilon_2})\,
    (1-e^{\varepsilon_1-\varepsilon_3})\, \cO_2  
    + e^{\varepsilon_1-\varepsilon_3}(1-e^{\varepsilon_1-\varepsilon_2})\,q  
    + e^{\varepsilon_1-\varepsilon_2}\, q\, \cO_1 
\end{split}\]

Notice that the structure constants $N^{\nu,d}_{\lambda,\mu}$
appearing in these examples satisfy Griffeth-Ram positivity
\cite{griffeth.ram:affine} in the sense that
\[ (-1)^{|\nu|+nd-|\lambda|-|\mu|}\, N^{\nu,d}_{\lambda,\mu} \ \in \ 
   \N[e^{\varepsilon_1-\varepsilon_2}-1, \dots, 
   e^{\varepsilon_{n-1}-\varepsilon_n}-1] \,.
\]
Using Theorem~\ref{T:mainA}, we have verified that this positivity
also holds for all Grassmannians $\Gr(m,n)$ with $n \leq 5$, and it is
natural to conjecture that it holds in general.  In the case of
ordinary equivariant $K$-theory, this has been proved in
\cite{anderson.griffeth.ea:positivity}.



\section{Proof of the Pieri formula}
\label{S:proof}

\subsection{Special Gromov-Witten invariants}

To prove the Pieri formula, we start by establishing a formula for
certain special Gromov-Witten invariants on Grassmannians.  Fix a
degree $d \geq 0$.  As usual we set $a = \max(m-d,0)$, $b =
\min(a+d,n)$, $Y_d = \Fl(a,b;n)$, and $Z_d = \Fl(a,m,b;n)$.  We also
define $X_d = \Gr(b,n)$.  The following commutative diagram was
exploited earlier to obtain a quantum Pieri formula for submaximal
orthogonal Grassmannians \cite{buch.kresch.ea:quantum}.  It was also
applied to Grassmannians of type A in \cite{tamvakis:quantum}.  All
maps in the diagram are projections.

\[\xymatrix{
  Z_d \ar[r]^{p_1\hmm{8}} \ar[d]^{q} & 
  \Fl(m,b;n) \ar[r]^{\hmm{8}p_2} \ar[d]^{q'} & X \\ 
  Y_d \ar[r]^{\pi} & X_d}
\]

Given a partition $\lambda$, let $\wh{\lambda}$ denote the partition
obtained by removing the first $d$ columns, i.e.\ $\wh{\lambda}_i =
\max(\lambda_i-d,0)$.  If $X_\lambda$ is a Schubert variety in $X$,
then $q'(p_2^{-1}(X_\lambda))$ is the Schubert variety in $X_d$
defined by $\wh\lambda$.  It follows from Lemma~\ref{L:pushschub} that
$q'_* p_2^*(\cO_\lambda) = \cO_{\wh\lambda} \in K(X_d)$.  Similarly,
if $\lambda$ is a partition contained in the $b \times (n-b)$
rectangle, then ${p_2}_* {q'}^*(\cO_\lambda) = \cO_{\wb\lambda}$,
where $\wb\lambda = (\lambda_{d+1}, \dots, \lambda_b)$ is the
partition obtained by removing the top $d$ rows of $\lambda$.  We will
occasionally write $\wh\lambda(d) = \wh\lambda$ and $\wb\lambda(d) =
\wb\lambda$ to avoid ambiguity about how many rows or columns to
remove.

\begin{thm} \label{T:specialgw}
  Let $\lambda$ be a partition with $\ell(\lambda) \leq d$.  For any
  classes $\alpha_1, \alpha_2 \in K(X)$ we have
\[ I_d(\cO_\lambda, \alpha_1, \alpha_2) = \euler{X_d}(\cO_{\wh\lambda(d)}
   \cdot q'_* p_2^*(\alpha_1) \cdot q'_* p_2^*(\alpha_2)) \,.
\]
\end{thm}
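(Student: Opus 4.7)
The plan is to deduce Theorem~\ref{T:specialgw} from Theorem~\ref{T:mainA} by projecting the $Y_d$-side of that identity further down to $X_d$ along $\pi$.  Applied to the triple $(\cO_\lambda,\alpha_1,\alpha_2)$, with $p = p_2\circ p_1$, Theorem~\ref{T:mainA} gives
\[ I_d(\cO_\lambda,\alpha_1,\alpha_2) = \euler{Y_d}\bigl(q_* p^*(\cO_\lambda)\cdot q_* p^*(\alpha_1)\cdot q_* p^*(\alpha_2)\bigr). \]

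The key step is the identity $q_* p^*(\cO_\lambda) = \pi^*(\cO_{\wh\lambda})$ in $K(Y_d)$, and this is where the hypothesis $\ell(\lambda)\le d$ enters essentially.  Both sides are structure sheaves of Schubert subvarieties of $Y_d$ by Lemma~\ref{L:pushschub}: the left-hand side is $\cO_{q(p^{-1}(X_\lambda))}$ and the right-hand side is $\cO_{\pi^{-1}(X_{\wh\lambda})}$, using the identification $X_{\wh\lambda} = q'(p_2^{-1}(X_\lambda))$ recorded in the text.  Chasing the commutative diagram gives $\pi(q(p^{-1}(X_\lambda))) = X_{\wh\lambda}$, hence the inclusion $q(p^{-1}(X_\lambda))\subset \pi^{-1}(X_{\wh\lambda})$.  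For the reverse inclusion, given $(A,B)$ with $B\in X_{\wh\lambda}$, I must produce $V$ with $A\subset V\subset B$ and $V\in X_\lambda$.  Since $\ell(\lambda)\le d$, the defining conditions of $X_\lambda$ reduce to $\dim(V\cap F_{k+i-\lambda_i})\ge i$ for $1\le i\le \ell(\lambda)$, the remaining conditions being automatic for dimension reasons; the condition $B\in X_{\wh\lambda}$ supplies a flag $W_1\subsetneq\cdots\subsetneq W_{\ell(\lambda)}$ of subspaces of $B$ with $W_i\subset F_{k+i-\lambda_i}$, and since $a+\ell(\lambda)\le m$ one can complete $A+W_{\ell(\lambda)}$ to an $m$-dimensional subspace $V\subset B$ that witnesses all of the Schubert conditions.

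Once this identification is in hand, the projection formula for $\pi:Y_d\to X_d$ converts the main formula into
\[ I_d(\cO_\lambda,\alpha_1,\alpha_2) = \euler{X_d}\bigl(\cO_{\wh\lambda}\cdot \pi_*(q_* p^*(\alpha_1)\cdot q_* p^*(\alpha_2))\bigr), \]
so the proof reduces to
\[ \pi_*\bigl(q_* p^*(\alpha_1)\cdot q_* p^*(\alpha_2)\bigr) = q'_* p_2^*(\alpha_1)\cdot q'_* p_2^*(\alpha_2). \]
I would apply Lemma~\ref{L:product} twice: once to $q:Z_d\to Y_d$ to rewrite the left product as a pushforward from the fiber product $Z_d^{(2)} := Z_d\times_{Y_d} Z_d$, and once to $q':\Fl(m,b;n)\to X_d$ to rewrite the right product as a pushforward from $W_d^{(2)} := \Fl(m,b;n)\times_{X_d}\Fl(m,b;n)$.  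The natural $A$-forgetting morphism $\tau:Z_d^{(2)}\to W_d^{(2)}$ intertwines the projections ($e'_i\circ\tau = p_1\circ e_i$) and the structural maps ($\psi\circ\tau = \pi\circ\phi$), so by the projection formula the required identity reduces to $\tau_*\cO_{Z_d^{(2)}} = \cO_{W_d^{(2)}}$.  Both varieties are smooth, and the general fiber of $\tau$ over $(V_1,V_2,B)\in W_d^{(2)}$ is the rational Grassmannian $\Gr(a,V_1\cap V_2)$, so this last identity follows from Theorem~\ref{T:push}.  The main obstacle I foresee is the Schubert-variety identification in the middle paragraph, which is precisely the place where $\ell(\lambda)\le d$ is used critically.
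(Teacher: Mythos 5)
Your proposal is correct, and its skeleton coincides with the paper's: both start from Theorem~\ref{T:mainA}, establish $q_*p^*(\cO_\lambda)=\pi^*(\cO_{\wh\lambda})$ from the set-theoretic identity $q(p^{-1}(X_\lambda))=\pi^{-1}(X_{\wh\lambda})$ together with Lemma~\ref{L:pushschub} (the paper compresses your middle paragraph into the phrase ``checking Schubert conditions''; your construction of $V\supset A+W_{\ell(\lambda)}$ inside $B$, using $a+\ell(\lambda)\le m$, is exactly that check and is where $\ell(\lambda)\le d$ enters), and then push down to $X_d$ via the projection formula. The divergence is in the remaining compatibility $\pi_*\bigl(q_*p^*(\alpha_1)\cdot q_*p^*(\alpha_2)\bigr)=q'_*p_2^*(\alpha_1)\cdot q'_*p_2^*(\alpha_2)$. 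The paper proves the stronger identity $\pi_*\bigl(q_*p_1^*(\beta_1)\cdot q_*p_1^*(\beta_2)\bigr)=q'_*(\beta_1)\cdot q'_*(\beta_2)$ for arbitrary $\beta_i\in K(\Fl(m,b;n))$: it first checks $p_1(q^{-1}(q(p_1^{-1}(\Omega))))=q'^{-1}(q'(\Omega))$ for Schubert varieties $\Omega$, deduces ${p_1}_*q^*q_*p_1^*={q'}^*q'_*$ from Lemma~\ref{L:pushschub} and linearity, and concludes by a chain of projection formulas; this is purely formal and needs no rationality input. You instead apply Lemma~\ref{L:product} to both fiber products and Theorem~\ref{T:push} to the $A$-forgetting map $\tau: Z_d\times_{Y_d}Z_d\to \Fl(m,b;n)\times_{X_d}\Fl(m,b;n)$, whose fibers are the Grassmannians $\Gr(a,V_1\cap V_2)$; this works (note $\dim(V_1\cap V_2)\ge 2m-b\ge a$, so $\tau$ is surjective, and both sources are smooth, so Theorem~\ref{T:push} applies), and it mirrors the fiber-product mechanism already used to prove Theorem~\ref{T:mainA}, though it only yields the special case $\beta_i=p_2^*\alpha_i$ and invokes the heavier resolution/Koll\'ar machinery where the paper gets by with Schubert-class bookkeeping. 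One small notational slip: your intertwining relation should identify the two structural maps to $Y_d$ and $X_d$ (the map you call $\phi$ was never defined), but the intended commuting square is clear and correct.
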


\begin{proof}
  If $\Omega \subset \Fl(m,b;n)$ is any subset, then
  $p_1(q^{-1}(q(p_1^{-1}(\Omega)))) = q'^{-1}(q'(\Omega))$.  By taking
  $\Omega$ to be a Schubert variety, it follows from
  Lemma~\ref{L:pushschub} that ${p_1}_* q^* q_* p_1^* [\cO_\Omega] =
  {q'}^* q'_* [\cO_\Omega]$.  We deduce that for arbitrary classes
  $\beta_1, \beta_2 \in K(\Fl(m,b;n))$ we have
\[\begin{split} 
  & \pi_*(q_*p_1^*(\beta_1) \cdot q_*p_1^*(\beta_2)) 
  = \pi_*q_*(q^*q_*p_1^*(\beta_1) \cdot p_1^*(\beta_2))
  = q'_*{p_1}_*(q^*q_*p_1^*(\beta_1) \cdot p_1^*(\beta_2)) \\
  &= q'_*({p_1}_*q^*q_*p_1^*(\beta_1) \cdot \beta_2)
  = q'_*({q'}^*q'_*(\beta_1) \cdot \beta_2)
  = q'_*(\beta_1) \cdot q'_*(\beta_2) \ \in K(X_d) \,.
\end{split}\]

Since $\ell(\lambda) \leq d$, it follows by checking Schubert
conditions that $q(p^{-1}(X_\lambda)) = \pi^{-1}(X_{\wh\lambda})$,
where $p = p_2 p_1$ and the Schubert variety $X_{\wh\lambda}$ lives in
the Grassmannian $X_d$.  This implies that $q_*p^*(\cO_\lambda) =
\pi^*(\cO_{\wh\lambda})$.  We deduce from Theorem~\ref{T:mainA} that
\[\begin{split}
  I_d(\cO_\lambda,\alpha_1,\alpha_2) 
  &= \euler{Y_d}( q_*p^*(\cO_\lambda) \cdot q_*p^*(\alpha_1) \cdot
      q_*p^*(\alpha_2) ) \\
  &= \euler{Y_d}( \pi^*(\cO_{\wh\lambda}) \cdot q_*p^*(\alpha_1) \cdot
     q_*p^*(\alpha_2) ) \\
  &= \euler{X_d}( \cO_{\wh\lambda} \cdot q'_*p_2^*(\alpha_1) \cdot 
     q'_*p_2^*(\alpha_2) )
\end{split}\]
as required.
\end{proof}

Let $\wh{\wb\lambda}(d)$ denote the result of removing the first $d$
rows and the first $d$ columns from $\lambda$.

\begin{cor} \label{C:twopoint}
  For any partition $\mu$ contained in the $m \times k$ rectangle and
  any class $\alpha \in K(X)$ we have $I_d(\cO_\mu, \alpha) =
  \euler{X}(\cO_{\wh{\wb\mu}(d)}, \alpha)$.
\end{cor}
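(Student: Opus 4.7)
The plan is to reduce the two-point invariant to a three-point one, apply Theorem~\ref{T:specialgw} with $\lambda=\emptyset$, and then transport the resulting Euler characteristic back from $X_d$ to $X$ via the projection formula and the push--pull identities of Lemma~\ref{L:pushschub}.

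First, using the identity $I_d(\beta_1,\beta_2)=I_d(\beta_1,\beta_2,1)$ together with the $S_3$-symmetry of the three-point invariant in its arguments, I would write $I_d(\cO_\mu,\alpha)=I_d(\cO_\emptyset,\cO_\mu,\alpha)$.  Since $\ell(\emptyset)=0\leq d$, Theorem~\ref{T:specialgw} applies directly with $\lambda=\emptyset$ and yields
\[
  I_d(\cO_\mu,\alpha) \;=\; \euler{X_d}\bigl(q'_*p_2^*(\cO_\mu)\cdot q'_*p_2^*(\alpha)\bigr).
\]

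Next, I would identify $q'_*p_2^*(\cO_\mu)$ as a single Schubert class on $X_d=\Gr(b,n)$.  Since $p_2$ and $q'$ are both $B$-equivariant, $p_2^{-1}(X_\mu)$ and $q'(p_2^{-1}(X_\mu))$ are Schubert varieties, and two applications of Lemma~\ref{L:pushschub} give $q'_*p_2^*(\cO_\mu)=\cO_{q'(p_2^{-1}(X_\mu))}$.  Translating the flag conditions $\dim(V\cap F_{k+i-\mu_i})\geq i$ with $V\subset B$, using $b=m+d$ and $n-b=k-d$, identifies this Schubert variety as $X_{\wh\mu(d)}$; the first $d$ columns of $\mu$ drop out because the corresponding lower bounds on $\dim(B\cap F_\bullet)$ are already forced by dimension count.

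Then the projection formula applied twice rewrites
\[
  \euler{X_d}\bigl(\cO_{\wh\mu(d)}\cdot q'_*p_2^*(\alpha)\bigr)
  \;=\; \euler{X}\bigl(p_{2*}q'^*\cO_{\wh\mu(d)}\cdot\alpha\bigr),
\]
and a further use of Lemma~\ref{L:pushschub} gives $p_{2*}q'^*\cO_{\wh\mu(d)}=\cO_{p_2(q'^{-1}(X_{\wh\mu(d)}))}$.  The remaining Schubert identification $p_2(q'^{-1}(X_{\wh\mu(d)}))=X_{\wh{\wb\mu}(d)}$ is the dual dimension-count: a subspace $V\in X$ lifts to some $B\in X_{\wh\mu(d)}$ precisely when the Schubert conditions on $B$ can be realized by a $d$-dimensional extension of $V$, which translates to the Schubert conditions defining $X_{\wh{\wb\mu}(d)}$, with the first $d$ rows lost because the top $d$ indices on $B$ impose no constraint on $V$.

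The main obstacle will be these two Schubert identifications: the removal of the first $d$ columns on the span side ($p_2\to q'$), and the additional removal of the first $d$ rows on the kernel side ($q'^{-1}\to p_2$).  Both are routine translations once the correct Schubert conditions are written down, but they carry the only nontrivial content of the proof beyond invoking Theorem~\ref{T:specialgw}.
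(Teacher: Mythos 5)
Your proposal is correct and follows essentially the same route as the paper: take $\lambda=\emptyset$ in Theorem~\ref{T:specialgw}, then move the Euler characteristic from $X_d$ back to $X$ by the projection formula. The two Schubert identifications you flag as the remaining work, $q'_*p_2^*(\cO_\mu)=\cO_{\wh\mu(d)}$ and ${p_2}_*{q'}^*(\cO_{\wh\mu(d)})=\cO_{\wh{\wb\mu}(d)}$, are exactly the push--pull facts the paper records (via Lemma~\ref{L:pushschub}) just before Theorem~\ref{T:specialgw}, so nothing further is needed.
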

\begin{proof}
  By taking $\lambda$ to be the empty partition, we obtain
\[\begin{split} 
  I_d(\cO_\mu, \alpha)
   &= \euler{X_d}(q'_* p_2^*(\cO_\mu) \cdot q'_* p_2^*(\alpha))
   = \euler{X_d}(\cO_{\wh\mu} \cdot q'_* p_2^*(\alpha)) \\
   &= \euler{\Fl(m,b;n)}( q'^*(\cO_{\wh\mu}) \cdot p_2^*(\alpha) )
   =  \euler{X}({p_2}_* {q'}^*(\cO_{\wh\mu}) \cdot \alpha) 
   =  \euler{X}(\cO_{\wh{\wb\mu}} \cdot \alpha)
\end{split}\]
  as claimed.
\end{proof}

We note that Theorem~\ref{T:specialgw} and Corollary~\ref{C:twopoint}
have straightforward generalizations to $K$-equivariant Gromov-Witten
invariants, with the same proofs.

\subsection{Pieri coefficients}

In the following we set $\cO_i = 1 \in K(X)$ for $i \leq 0$.  For the
statement of the next lemma we need to remark that the structure
constants $N^{\nu,0}_{\lambda,\mu}$ of degree zero are independent of
the Grassmannian on which they are defined: they appear in the
multiplication of stable Grothendieck polynomials $\groth_\lambda
\cdot \groth_\mu = \sum N_{\lambda, \mu}^{\nu, 0} \groth_\nu$.  In
particular, these coefficients are well defined when the partitions
$\lambda$, $\mu$, and $\nu$ are not contained in the $m \times k$
rectangle.

\begin{lemma} \label{L:pushdiff}
  Let $\lambda$ be a partition contained in the $m\times k$ rectangle
  and $0 \leq i \leq m$.  Then we have
  \[ \cO_{i-d} \cdot \cO_{\wh\lambda(d)} 
     - q'_* p_2^*(\cO_i \cdot \cO_\lambda) 
     = \sum_{\ell(\mu)=m+1} N^{\mu,0}_{i,\lambda}\, \cO_{\wh\mu(d)}
  \]
  in $K(X_d)$, where the sum is over all partitions $\mu$ with exactly
  $m+1$ rows and at most $k$ columns.
\end{lemma}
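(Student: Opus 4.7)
My plan is to compute both sides of the identity in the Schubert basis of $K(X_d)$ using Lenart's Pieri formula \eqref{E:lenart}, and to match coefficients of each basis element.

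By Lemma~\ref{L:pushschub}(a) combined with the identification $q'(p_2^{-1}(X_\mu)) = X_{\wh\mu(d)}$ recorded before Theorem~\ref{T:specialgw}, we have $q'_*p_2^*\cO_\mu = \cO_{\wh\mu(d)}$ for every partition $\mu\subseteq m\times k$. Expanding the ordinary product $\cO_i\cdot\cO_\lambda\in K(X)$ with Lenart's rule and pushing forward,
\[
  q'_*p_2^*(\cO_i\cdot\cO_\lambda) \;=\; \sum_{\mu:\ \ell(\mu)\leq m} N^{\mu,0}_{i,\lambda}\,\cO_{\wh\mu(d)} \quad\text{in } K(X_d).
\]
Since Lenart's rule for $\cO_i\cdot\cO_\lambda$ only produces partitions $\mu$ with $\ell(\mu)\leq m+1$ (a horizontal strip adds at most one box per column), the lemma reduces to proving
\[
  \cO_{i-d}\cdot\cO_{\wh\lambda(d)} \;=\; \sum_{\mu:\ \ell(\mu)\leq m+1,\ \mu_1\leq k} N^{\mu,0}_{i,\lambda}\,\cO_{\wh\mu(d)} \quad\text{in } K(X_d).
\]

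To prove this identity I match coefficients of each Schubert class $\cO_{\nu'}$. If $\nu'/\wh\lambda(d)$ is not a horizontal strip then both sides vanish at $\cO_{\nu'}$: the left side by Lenart's rule in $K(X_d)$, and the right side because $\mu/\lambda$ being a horizontal strip forces $\wh\mu(d)/\wh\lambda(d)$ to be one as well. Otherwise, the partitions $\mu$ with $\wh\mu(d)=\nu'$, $\mu/\lambda$ a horizontal strip, and $\ell(\mu)\leq m+1$ are parametrized by their ``tail'' $(\mu_{\ell'+1},\dots,\mu_{m+1})$, where $\ell'=\ell(\nu')$ and each entry lies in $\{0,1,\dots,d\}$ subject to the weak-decrease and horizontal-strip constraints. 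Both $|\mu/\lambda|$ and $r(\mu/\lambda)$ are then explicit affine functions of the tail together with $|\nu'/\wh\lambda(d)|$ and $r(\nu'/\wh\lambda(d))$. Substituting into Lenart's formula and summing over tails reduces the coefficient identity to a binomial identity of the form
\[
  \sum_{t=0}^{d}(-1)^t\binom{r_0-1+[t\geq 1]}{s+t} \;=\; (-1)^d\binom{r_0-1}{s+d},
\]
with $r_0=r(\nu'/\wh\lambda(d))$ and $s=|\nu'/\wh\lambda(d)|-(i-d)$, which follows by iterated application of Pascal's rule (a short telescoping computation).

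The main obstacle will be the case analysis required by the parametrization: one has to separate according to whether $\ell(\nu')=\ell(\wh\lambda(d))$ or $\ell(\wh\lambda(d))+1$, and according to whether $\lambda_m\geq d$, since these affect both the allowable range of the tail entries and whether each tail entry contributes an additional row to $r(\mu/\lambda)$. Each subcase nevertheless reduces to a binomial identity of the same telescoping flavor, so once the tail parametrization is carefully set up the remaining bookkeeping is routine.
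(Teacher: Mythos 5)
Your overall plan is genuinely different from the paper's argument: the paper does no combinatorics here at all, but observes that $\Fl(m,b;n) \to X_d$ is a Grassmann bundle over $X_d$ and quotes the pushforward formula of \cite[Cor.~7.4]{buch:grothendieck} (cf.\ \cite[\S8]{buch:littlewood-richardson}), adding only the remark that $N^{\mu,0}_{i,\lambda}=0$ for $\ell(\mu)\geq m+2$. Your first reduction is fine: $q'_*p_2^*(\cO_\mu)=\cO_{\wh\mu(d)}$ for $\mu\subseteq m\times k$ (though in your first display the sum must also carry the condition $\mu_1\leq k$, since only partitions in the rectangle occur in $K(X)$), and the vanishing argument when $\nu'/\wh\lambda(d)$ is not a horizontal strip is correct.

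The gap is in the central coefficient-matching step. The fiber of $\mu\mapsto\wh\mu(d)$ over $\nu'$ is indexed by the whole tuple of tail entries $\mu_j$, $\ell(\nu')<j\leq m+1$, each ranging independently over an interval $[\lambda_j,\min(\lambda_{j-1},d)]$, and Lenart's coefficient for $\mu$ depends both on the total number of added boxes \emph{and} on the number of tail rows with $\mu_j>\lambda_j$ (this count is not an affine function of the tail). Consequently the coefficient of $\cO_{\nu'}$ on the right-hand side is a genuinely multi-dimensional alternating sum, not the single-index sum $\sum_{t=0}^d(-1)^t\binom{r_0-1+[t\geq1]}{s+t}$ you display, which has exactly one term per $t$. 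For instance, take $X=\Gr(3,6)$, $d=2$, $\lambda=(2,2,1)$, $i=1$, $\nu'=\emptyset$: the fiber consists of $\mu=(2,2,1)$, $(2,2,2)$, $(2,2,1,1)$, $(2,2,2,1)$ contributing $0,+1,+1,-1$, a pattern no one-parameter telescoping of your stated shape produces. Two further points your bookkeeping misses: the head rows contribute $|\nu'/\wh\lambda(d)|+\sum_{j\leq\ell(\nu')}\max(d-\lambda_j,0)$ to $|\mu/\lambda|$, so the extra term $\sum_j\max(d-\lambda_j,0)$ (nonzero whenever some $\lambda_j<d$ among the first $\ell(\nu')$ rows) must enter the identity, and the case $i\leq d$, where $\cO_{i-d}=1$ and the target coefficient is $\delta_{\nu',\wh\lambda(d)}$ rather than a Lenart binomial, is not covered by your formula. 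The plan can likely be completed, but it requires summing the tail one row at a time (an induction on the number of tail rows, each step being a Pascal-type telescoping) rather than the single ``short telescoping computation'' claimed; alternatively you can sidestep all of this, as the paper does, by invoking the Grassmann-bundle pushforward formula.
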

\begin{proof}
  By using that $\Fl(m,b;n)$ is a Grassmann bundle over $X_d$, this is
  a special case of \cite[Cor.~7.4]{buch:grothendieck}, cf.\ 
  \cite[\S8]{buch:littlewood-richardson}.  Notice that Lenart's Pieri
  rule (\ref{E:lenart}) implies that $N^{\mu,0}_{i,\lambda}$ is zero
  whenever $\ell(\mu) \geq m+2$.
\end{proof}

\begin{cor}\label{cor:diff}
  Let $\lambda$ be contained in the $m \times k$ rectangle, $0\leq
  i\leq m$, and $\alpha \in K(X)$.  Then we have
  \[ I_d(\cO_i, \cO_\lambda, \alpha) - I_d(\cO_i \cdot \cO_\lambda,
     \alpha) = 
     \sum_{\ell(\mu)=m+1} N^{\mu,0}_{i,\lambda}\,
     \euler{X}(\cO_{\wh{\wb\mu}(d)} \cdot \alpha) \,.
  \]
\end{cor}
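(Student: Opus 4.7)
The plan is to reduce both invariants on the left-hand side to Euler characteristics on $X_d$ via the Grassmann bundles $p_2\colon\Fl(m,b;n)\to X$ and $q'\colon\Fl(m,b;n)\to X_d$, and then use Lemma~\ref{L:pushdiff} to isolate the discrepancy. The case $d=0$ is immediate because both terms on the left collapse to $\euler{X}(\cO_i\cdot\cO_\lambda\cdot\alpha)$, while the right-hand side is empty: for $\ell(\mu)=m+1$ we have $\cO_{\wh{\wb\mu}(0)}=\cO_\mu=0$ in $K(X)$. I will therefore assume $d\geq 1$.

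First I would apply Theorem~\ref{T:specialgw} with the special Schubert class $\cO_i$, whose indexing partition $(i)$ has length at most $1\leq d$. Combined with $q'_*p_2^*(\cO_\lambda)=\cO_{\wh\lambda(d)}$ from Lemma~\ref{L:pushschub}, this gives
\[
  I_d(\cO_i,\cO_\lambda,\alpha)=\euler{X_d}\bigl(\cO_{i-d}\cdot\cO_{\wh\lambda(d)}\cdot q'_*p_2^*(\alpha)\bigr).
\]
Separately, by linearity together with Corollary~\ref{C:twopoint} applied term-by-term to the ordinary Pieri expansion $\cO_i\cdot\cO_\lambda=\sum_{\ell(\mu)\leq m}N^{\mu,0}_{i,\lambda}\cO_\mu$ in $K(X)$, I would obtain
\[
  I_d(\cO_i\cdot\cO_\lambda,\alpha)=\sum_{\ell(\mu)\leq m}N^{\mu,0}_{i,\lambda}\,\euler{X}\bigl(\cO_{\wh{\wb\mu}(d)}\cdot\alpha\bigr).
\]

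The key move is then to rewrite $\cO_{i-d}\cdot\cO_{\wh\lambda(d)}$ inside the first formula using Lemma~\ref{L:pushdiff}. Since $q'_*p_2^*(\cO_i\cdot\cO_\lambda)=\sum_{\ell(\mu)\leq m}N^{\mu,0}_{i,\lambda}\,\cO_{\wh\mu(d)}$, this packages $I_d(\cO_i,\cO_\lambda,\alpha)$ as a single sum $\sum_{\ell(\mu)\leq m+1}N^{\mu,0}_{i,\lambda}\,\euler{X_d}\bigl(\cO_{\wh\mu(d)}\cdot q'_*p_2^*(\alpha)\bigr)$. Each summand reduces to $\euler{X}\bigl(\cO_{\wh{\wb\mu}(d)}\cdot\alpha\bigr)$ via the projection formula and the identity ${p_2}_*(q')^*\cO_{\wh\mu(d)}=\cO_{\wh{\wb\mu}(d)}$ from Lemma~\ref{L:pushschub}, exactly as in the proof of Corollary~\ref{C:twopoint}. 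Subtracting the two expressions cancels the $\ell(\mu)\leq m$ contributions and leaves precisely the sum over $\ell(\mu)=m+1$ claimed in the statement.

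I do not expect any genuine obstacle: Theorem~\ref{T:specialgw}, Corollary~\ref{C:twopoint}, and Lemmas~\ref{L:pushdiff} and~\ref{L:pushschub} do all the real work. The only point that merits attention is combinatorial bookkeeping: partitions $\mu$ with $\ell(\mu)=m+1$ satisfy $\cO_\mu=0$ in $K(X)$, so they are invisible in the ordinary product $\cO_i\cdot\cO_\lambda$, yet they are exactly the ``ghost'' classes produced by the Grassmann-bundle pushforward $q'_*p_2^*$ and picked up as the correction term in Lemma~\ref{L:pushdiff}.
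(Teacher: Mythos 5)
Your argument is correct and is essentially the paper's own proof: Theorem~\ref{T:specialgw} applied to $\cO_i$ for the three-point term, the two-point formula (Corollary~\ref{C:twopoint}, i.e.\ the empty-partition case of Theorem~\ref{T:specialgw}) for $I_d(\cO_i\cdot\cO_\lambda,\alpha)$, Lemma~\ref{L:pushdiff} to produce the $\ell(\mu)=m+1$ correction, and the projection formula with Lemma~\ref{L:pushschub} to pass from $\euler{X_d}$ to $\euler{X}$. One small caveat: your sums indexed by $\ell(\mu)\le m$ must be read as sums over $\mu\subset(k)^m$ (the terms actually occurring in the product in $K(X)$), since for $d>k$ there exist nonzero universal constants $N^{\mu,0}_{i,\lambda}$ with $\mu_1>k$ for which $\cO_{\wh\mu(d)}\neq 0$ in $K(X_d)$; read consistently on both sides of your subtraction, this convention issue does not affect the cancellation or the conclusion.
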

\begin{proof}
  By Theorem~\ref{T:specialgw} and Lemma~\ref{L:pushdiff} we have
  \[\begin{split}
  & I_d(\cO_i, \cO_\lambda, \alpha) - I_d(\cO_i \cdot \cO_\lambda,\alpha) \\
  &= \euler{X_d}(\cO_{i-d} \cdot \cO_{\wh\lambda} \cdot q'_*
  p_2^*(\alpha)) - \euler{X_d}(q'_* p_2^*(\cO_i \cdot \cO_\lambda)
  \cdot q'_* p_2^*(\alpha)) \\
  &= \sum_{\ell(\mu)=m+1} N^{\mu,0}_{i,\lambda}\,
  \euler{X_d}(\cO_{\wh\mu(d)} \cdot q'_* p_2^*(\alpha) ) \,.
  \end{split}\]
  Finally, the projection formula implies that
  $\euler{X_d}(\cO_{\wh\mu(d)} \cdot q'_* p_2^*(\alpha)) =
  \euler{X}(\cO_{\wh{\wb\mu}(d)} \cdot \alpha)$.
\end{proof}

\begin{proof}[Proof of Theorem~\ref{T:qkpieri}]
  The Pieri coefficients of degree one are given by
\begin{equation}\label{E:deg1signs}
\begin{split}
  N^{\nu,1}_{i,\lambda}
  &= I_1(\cO_i, \cO_\lambda, \cO_\nu^\vee) - \sum_\kappa
     N^{\kappa,0}_{i,\lambda}\, I_1(\cO_\kappa, \cO_\nu^\vee) \\
  &= I_1(\cO_i, \cO_\lambda, \cO_\nu^\vee) -
     I_1(\cO_i \cdot \cO_\lambda, \cO_\nu^\vee) \\
  &= \sum_{\ell(\mu)=m+1} N^{\mu,0}_{i,\lambda}\,
     \euler{X}(\cO_{\wh{\wb\mu}(1)} \cdot \cO_\nu^\vee)
   = \sum_{j=\nu_1+1}^k N^{(j,\nu+1^m),0}_{i,\lambda} \,.
\end{split}
\end{equation}
Here we used Cor.~\ref{cor:diff} and the Poincar\'e duality.  Notice
that this implies that
\[ \sum_\nu N^{\nu,1}_{i,\lambda} \cO_\nu = 
   \sum_{\ell(\mu)=m+1} N^{\mu,0}_{i,\lambda}\, \cO_{\wh{\wb\mu}(1)}
   \,.
\]
It follows that for any degree $d \geq 0$ we have
\[\begin{split}
   I_d(\cO_i, \cO_\lambda, \cO_\nu^\vee) 
     - I_d(\cO_i \cdot \cO_\lambda, \cO_\nu^\vee) 
   &= \sum_{\ell(\mu)=m+1} N^{\mu,0}_{i,\lambda}\,
      \euler{X}(\cO_{\wh{\wb\mu}(d)} \cdot \cO_\nu^\vee) \\
   &= \sum_{\kappa} N^{\kappa,1}_{i,\lambda}\,
      \euler{X}(\cO_{\wh{\wb\kappa}(d-1)}, \cO_\nu^\vee) \\
   &= \sum_\kappa N^{\kappa,1}_{i,\lambda}\, I_{d-1}(\cO_\kappa,
      \cO_\nu^\vee) \,.
\end{split}\]
This identity implies that $N^{\nu,d}_{i,\lambda} = 0$ for $d\geq 2$
by induction on $d$.

We finally prove that the constants $N^{\nu,1}_{i,\lambda}$ of degree
one are given by the signed binomial coefficients of
Theorem~\ref{T:qkpieri}.  Define a {\em marked horizontal strip\/} for
the pair $(i,\lambda)$ to be a horizontal strip $D$ of some shape
$\mu/\lambda$, for which $|D|-i$ of the non-empty rows of $D$ are
marked, excluding the bottom row.  Then Lenart's Pieri formula
(\ref{E:lenart}) states that
\[ N^{\mu,0}_{i,\lambda} = \sum_{D:\,\sh(D)=\mu/\lambda} (-1)^{|D|-i} \]
and we obtain from (\ref{E:deg1signs}) that
\begin{equation}\label{E:beforeinv} 
  N^{\nu,1}_{i,\lambda} = 
  \sum_D (-1)^{|D|-i}
\end{equation}
where this sum is over all marked horizontal strips $D$ for
$(i,\lambda)$ of some shape $(j,\nu+1^m)/\lambda$ with $\nu_1 + 1 \leq
j \leq k$.  

It turns out that the sum (\ref{E:beforeinv}) does not change if we
include only marked horizontal strips $D$ of shape
$(k,\nu+1^m)/\lambda$ such that the top row of $D$ is not marked.
This follows from the sign reversing involution that sends any $D =
(j,\nu+1^m)/\lambda$ with $j < k$ and the top row unmarked to $D' =
(j+1,\nu+1^m)/\lambda$ with the top row marked; and which sends $D' =
(j,\nu+1^m)/\lambda$ with the top row marked to $D =
(j-1,\nu+1^m)/\lambda$ with the top row unmarked.  Note that if the
top row of $(j,\nu+1^m)$ is marked, then this row is not empty and $j
> \nu_1+1$.

We finally notice that $(k,\nu+1^m)/\lambda$ is a horizontal strip
if and only if $\nu$ is obtained from $\lambda$ by removing a subset
of the boxes in the outer rim of $\lambda$, with at least one box
removed in each of the $m$ rows.  And the $i$-th row of
$(k,\nu+1^m)/\lambda$ is non-empty if and only if the $(i-1)$-st row
of $\nu$ contains a box from the outer rim of $\lambda$.
\end{proof}


\section{Gromov-Witten invariants of cominuscule varieties}
\label{S:comin}

In this last section we generalize our formula for Grassmannian
Gromov-Witten invariants to work for all cominuscule homogeneous
spaces, with the exception that $K$-theoretic invariants can be
computed for ``{\em small\/}'' degrees only.  The computation of
Gromov-Witten invariants in Theorem~\ref{T:mainA} extends almost
verbatim to Lagrangian and maximal orthogonal Grassmannians by using a
case by case analysis as in \cite{buch.kresch.ea:gromov-witten}.
However, we will utilize here the unified approach of Chaput, Manivel,
and Perrin \cite{chaput.manivel.ea:quantum}, which makes it possible
to state and prove our result in a type independent manner.

A cominuscule variety is a homogeneous space $X = G/P$, where $G$ is a
simple complex linear algebraic group and $P \subset G$ is a parabolic
subgroup corresponding to a cominuscule simple root $\alpha$.  The
latter means that when the highest root is expressed as a linear
combination of simple roots, the coefficient of $\alpha$ is one.
Since $P$ is maximal we have $H_2(X) \cong \Z$, so the degree of a
stable map to $X$ can be identified with a non-negative integer.  The
family of cominuscule varieties include Grassmannians of type A,
Lagrangian Grassmannians $\LG(m,2m)$, maximal orthogonal Grassmannians
$\OG(m,2m)$, quadric hypersurfaces $\Q^n \subset \P^{n+1}$, as well as
two exceptional varieties called the Cayley plane and the Freudenthal
variety.  All minuscule varieties are also cominuscule.  We refer to
\cite{buch.kresch.ea:gromov-witten, chaput.manivel.ea:quantum} for
more details.


Given two points $x,y \in X$, we let $d(x,y)$ denote the smallest
possible degree of a stable map $f : C \to X$ with $x,y \in f(C)$.
This definition of F.~L.~Zak \cite{zak:tangents} gives $X$ the
structure of a metric space.  Let $X(d)_{x,y}$ denote the union
of the images of all such stable maps $f$ of degree $d = d(x,y)$:
\[ X(d)_{x,y} \ = \ \bigcup_{\deg(f)=d \,;\, 
   x,y\in f(C)} f(C) \ \subset \ X \,. 
\]
It was proved in \cite{chaput.manivel.ea:quantum} that this set is a
Schubert variety in $X$.  Write $X(d)$ for the abstract variety
defined by $X(d)_{x,y}$.  Let $Y_d$ be a set parametrizing all
varieties $X(d)_{x,y}$ for $x,y \in X$ with $d(x,y)=d$, and let $G$
act on this set by translation.  For an element $\omega \in Y_d$ we
let $X_\omega \subset X$ denote the corresponding variety.  We also
set $d_{\max} = d_{\max}(X) = \max \{ d(x,y) \mid x,y \in X \}$.  A
root theoretic interpretation of this number can be found in
\cite[Def.~3.15]{chaput.manivel.ea:quantum}.  A degree $d$ is {\em
  small\/} if $d \leq d_{\max}$.  The following statement combines
Prop.~3.16, Prop.~3.17, and Fact~3.18 of
\cite{chaput.manivel.ea:quantum}.

\begin{prop}[Chaput, Manivel, Perrin] \label{P:cmp}
Let $d \leq d_{\max}$ be a small degree.
\begin{abcenum}
\item 
  The metric $d(x,y)$ attains all values between $0$ and $d_{\max}$.

\item
  $G$ acts transitively on the set of pairs $(x,y) \in X \times
  X$ with $d(x,y)=d$.
  
\item 
  Let $\omega \in Y_d$.  The stabilizer $G_\omega \subset G$ of
  $X_\omega$ is a parabolic subgroup of $G$ that acts transitively on
  $X_\omega$.
   
\item 
  Given a stable map $f : C \to X$ of degree $d$, there exists a
  point $\omega \in Y_d$ such that $f(C) \subset X_\omega$.  If $f \in
  \Mb_{0,3}(X,d)$ is a general point, then $\omega$ is uniquely
  determined.
  
\item 
  Let $\omega \in Y_d$ and let $x,y,z \in X_\omega$ be three general
  points.  Then there exists a unique stable map $f : C \to X$ of
  degree $d$ that sends the three marked points to $x$, $y$, and $z$.
  Furthermore we have $f(C) \subset X_\omega$.
\end{abcenum}
\end{prop}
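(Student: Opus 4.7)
The plan is to proceed part by part, exploiting two structural features of cominuscule varieties: the projectivised tangent space at any point is an irreducible homogeneous space for the Levi factor of $P$, and any Schubert variety in $X$ is itself a cominuscule homogeneous space for its stabiliser. Throughout the plan I would use that $G$ acts transitively on $X$ and that lines through a fixed point form a single Levi orbit in the projectivised tangent space, a direct consequence of the cominuscule condition.

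For (a), I would induct on $d \leq d_{\max}$: given a pair $(x,y)$ with $d(x,y)=d-1$, the Schubert variety $X(d-1)_{x,y}$ is proper in $X$, so attaching a general line to $y$ leaves this variety and produces a point $y'$ with $d(x,y') = d$. For (b), the distance-one case reduces to the transitive Levi action on lines through a point. For $d \geq 2$, given two distance-$d$ pairs $(x,y)$ and $(x',y')$, translate to assume $x = x'$, find intermediate points $z, z'$ lying at distance one from $x$ on minimal chains to $y$ and $y'$, and apply induction. Part (c) then follows: $G_\omega$ equals the stabiliser in $G$ of any distance-$d$ pair $(x,y) \in X_\omega^2$, which is parabolic because $X_\omega$ is a Schubert variety; and $G_\omega$ acts transitively on $X_\omega$ by fixing $x$ and varying $y$.

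For (d), given a stable map $f : C \to X$ of degree $d$, I would choose points $x, y \in f(C)$ realising the maximal distance $d' \leq d$ among image points, and extract those components of $C$ of positive degree connecting them; their degrees sum to at least $d'$ and their images lie in $X(d')_{x,y}$, while any remaining component has both endpoints already inside this Schubert variety, so induction on the number of components places the entire image inside some $X_\omega$. Uniqueness for generic $f$ follows because two distinct Schubert varieties of the form $X_\omega$ meet in strictly smaller cominuscule subvarieties, and these cannot contain the image of a generic three-pointed stable map by a dimension count against $\dim \Mb_{0,3}(X, d)$.

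The heart of the argument, and the main obstacle, is (e): three general points of $X_\omega$ should determine a unique stable map of degree $d$. This amounts to showing that the Gromov--Witten invariant $I_d(\text{pt},\text{pt},\text{pt}) = 1$ on the cominuscule variety $X_\omega$ together with the scheme-theoretic statement that the evaluation map $\ev : \Mb_{0,3}(X_\omega, d) \to X_\omega^3$ is generically \'etale of degree one. A type-by-type verification works for the classical families (sub-Grassmannians, Lagrangian and orthogonal Grassmannians, quadrics, and the two exceptional cases), but a unified approach would combine a dimension count $\dim \Mb_{0,3}(X_\omega, d) = 3 \dim X_\omega$ with an infinitesimal calculation: the cominuscule tangent decomposition at a known curve forces the differential of $\ev$ to be injective, so $\ev$ is dominant and generically an isomorphism. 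With (b) and (c) in hand, this is enough to conclude both existence and uniqueness in (e).
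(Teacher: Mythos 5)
You should first note that the paper does not prove this proposition at all: it is quoted with attribution, "combines Prop.~3.16, Prop.~3.17, and Fact~3.18 of Chaput--Manivel--Perrin," so there is no internal argument to compare against; your plan is in effect an attempt to reprove their results from scratch, and as such it has genuine gaps exactly where the real content lies. The most serious is (c). The stabilizer $G_\omega$ of $X_\omega$ is \emph{not} the stabilizer of a chosen distance-$d$ pair $(x,y)\in X_\omega^2$; it permutes all such pairs inside $X_\omega$ and is strictly larger in general, and "fixing $x$ and varying $y$" is not an argument for transitivity on $X_\omega$. Transitivity of the stabilizer is precisely the nontrivial point (a typical Schubert variety is singular and its stabilizing parabolic has many orbits); Chaput--Manivel--Perrin get it by identifying $X(d)$ explicitly as a smaller cominuscule homogeneous space ($\Gr(d,2d)$, $\LG(d,2d)$, $\OG(2d,4d)$, a quadric, etc.), a case analysis your plan never carries out. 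Similarly in (b) the induction does not close: after moving $(z,y)$ to $(z',y')$ you lose control of $x$, and what is actually needed is transitivity of $\mathrm{Stab}(x)$ on the sphere of radius $d$ about $x$, together with a refined two-point transitivity in the inductive step that your hypothesis does not supply.

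In (d), the claim that the components of $C$ joining $x$ and $y$ have image in $X(d')_{x,y}$ is unjustified: $X(e)_{x,y}$ is only defined as the union of images of maps of degree exactly $e=d(x,y)$, and a connecting subtree of degree larger than $d'$ need not lie in $X(d')_{x,y}$; moreover when $d'<d$ you must still produce $\omega\in Y_d$, which requires a containment of $X(d')_{x,y}$ in some $X_\omega$ that the plan does not establish, and the uniqueness-for-general-$f$ statement is only waved at by a dimension count. Finally, (e) --- which you rightly call the heart --- is left as either an unperformed type-by-type verification or an asserted infinitesimal computation that the differential of $\ev$ on $\Mb_{0,3}(X(d),d)$ is injective; nothing in the plan proves this, nor the equality $\dim \Mb_{0,3}(X(d),d)=3\dim X(d)$, nor the final assertion that \emph{every} degree-$d$ map through three general points of $X_\omega$ has image inside $X_\omega$. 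So the proposal is a sensible outline of what must be shown, but at each load-bearing step it either asserts the conclusion or rests on an incorrect identification; if you want a self-contained treatment you will essentially have to reproduce the explicit classification and arguments of Chaput--Manivel--Perrin rather than the uniform sketch given here.
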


Parts (b) and (c) of this proposition imply that $Y_d$ is a
homogeneous $G$-variety.  The idea of Chaput, Manivel, and Perrin's
construction is that the kernel-span pairs known from the classical
types are replaced by points in the variety $Y_d$, and the condition
that an $m$-plane in $\Gr(m,n)$ lies between a given kernel-span pair
$\omega$ is replaced with the condition that a point of $X$ belongs to
$X_\omega$.  For the cominuscule Grassmannians of types A, C, and D,
the varieties $Y_d$ and $X(d)$ are given in the following table; a
complete list can be found in
\cite[Prop.~3.16]{chaput.manivel.ea:quantum}.

\[\begin{array}{ccccc}\label{table:X(d)}
 X & d_{\max} & Y_d & X(d) \\
\hline
\Gr(m,n) & \min(m,n-m) & \Fl(m-d,m+d;n) & \Gr(d,2d) \\
\LG(n,2n) & n & \IG(n-d,2n) & \LG(d,2d) \\
\OG(n,2n) & \lfloor\frac{n}{2}\rfloor & \OG(n-2d,2n) &  \OG(2d,4d) \\ 
\end{array}
\vspace{2mm}
\]

Define the incidence variety $Z_d = \{ (\omega,x) \in Y_d \times X
\mid x \in X_\omega \}$.  It follows from part (c) of the proposition
that the diagonal action of $G$ on this set is transitive.
Furthermore, since $X_\omega$ is a Schubert variety and all Borel
subgroups in $G$ are conjugate, one can choose $\omega \in Y_d$ such
that the $G_\omega$ and $P$ both contain a common Borel subgroup of
$G$.  Since this Borel subgroup will be contained in the stabilizer of
the point $(\omega_0, P/P) \in Z_d$, it follows that $Z_d$ is also a
homogeneous space for $G$.  We also need the varieties $M_d =
\Mb_{0,3}(X,d)$, $\Bl_d = \{(\omega,f) \in Y_d \times M_d \mid \im(f)
\subset X_\omega \}$, and $Z_d^{(3)} = \{(\omega,x_1,x_2,x_3) \in Y_d
\times X^3 \mid x_i \in X_\omega \text{ for } 1 \leq i \leq 3 \}$.
These spaces define a generalization of the diagram (\ref{E:diagram}),
where the maps $\pi$, $\phi$, $\ev_i$, $e_i$, $p$, and $q$ are as
before defined using evaluation maps and projections.  If $T \subset
G$ is a maximal torus, then all of these maps are $T$-equivariant.
Part (d) of Proposition~\ref{P:cmp} implies that $\pi$ is birational,
and part (e) implies that $\phi$ is birational.  The following theorem
is proved exactly as Theorem~\ref{T:mainA}.

\begin{thm} \label{T:mainC}
  Let $d \leq d_{\max}$ be a small degree for the cominuscule variety
  $X$, and let $\alpha_1, \alpha_2, \alpha_3 \in K^T(X)$.  Then
  \[ I_d^T(\alpha_1, \alpha_2, \alpha_3) = 
     \euler{Y_d}^T(q_*p^*(\alpha_1) \cdot q_*p^*(\alpha_2) \cdot
     q_*p^*(\alpha_3)) \,.
  \]
\end{thm}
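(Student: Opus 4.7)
The plan is to transcribe the proof of Theorem~\ref{T:mainA} into the cominuscule setting, with the simplification that the hypothesis $d \leq d_{\max}$ places us entirely in the regime where the key map $\phi$ is genuinely birational; we will therefore not need the rationality-of-Gromov-Witten-varieties input (Corollary~\ref{C:ratA}) nor the machinery of Theorem~\ref{T:push}. The cominuscule analogue of diagram~(\ref{E:diagram}) consists of the $T$-equivariant morphisms $\pi : \Bl_d \to M_d$ (second projection), $\phi : \Bl_d \to Z_d^{(3)}$ sending $(\omega,f)$ to $(\omega,\ev_1(f),\ev_2(f),\ev_3(f))$, the projections $e_i : Z_d^{(3)} \to Z_d$, $p : Z_d \to X$, and $q : Z_d \to Y_d$.

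First I would verify that $\pi$ and $\phi$ are $T$-equivariant birational morphisms. This is read off directly from Proposition~\ref{P:cmp}: part~(d) says that a generic stable map of degree $d$ is contained in a unique $X_\omega$, so $\pi^{-1}(f) = (\omega,f)$ on a dense open subset of $M_d$; part~(e) says that for generic $(\omega, x_1, x_2, x_3) \in Z_d^{(3)}$ there is a unique stable map sending the marked points to the $x_i$, so $\phi$ is a birational isomorphism.

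Next I would establish that the target varieties are sufficiently nice. The variety $Z_d^{(3)} = Z_d \times_{Y_d} Z_d \times_{Y_d} Z_d$ is a $G$-homogeneous bundle over $Y_d$ with fiber $X_\omega^3$; since each $X_\omega$ is itself a homogeneous space for $G_\omega$ by part~(c) of Proposition~\ref{P:cmp}, both $Z_d$ and $Z_d^{(3)}$ are smooth projective $G$-varieties. The variety $\Bl_d$ is a $G$-equivariant fiber bundle over $Y_d$ whose fiber over $\omega$ is $\Mb_{0,3}(X_\omega,d) \cong \Mb_{0,3}(X(d),d)$; since $X(d)$ is cominuscule and $\Mb_{0,3}(X(d),d)$ has at worst finite quotient singularities, the same is true for $\Bl_d$, which therefore has rational singularities. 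Combining these facts with the birationality of $\pi$ and $\phi$, the discussion preceding Theorem~\ref{T:push} yields
\[ \pi_*[\cO_{\Bl_d}] = [\cO_{M_d}] \in K_T(M_d), \qquad \phi_*[\cO_{\Bl_d}] = [\cO_{Z_d^{(3)}}] \in K_T(Z_d^{(3)}). \]

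Finally I would assemble the computation verbatim from the proof of Theorem~\ref{T:mainA}: the projection formula applied to $\pi$ converts $\chi^T_{M_d}(\ev_1^*\alpha_1 \cdot \ev_2^*\alpha_2 \cdot \ev_3^*\alpha_3)$ to a pushforward from $\Bl_d$; commutativity of the diagram and the projection formula for $\phi$ convert this further to $\chi^T_{Z_d^{(3)}}(e_1^* p^*\alpha_1 \cdot e_2^* p^*\alpha_2 \cdot e_3^* p^*\alpha_3)$; since $Z_d^{(3)} = Z_d \times_{Y_d} Z_d \times_{Y_d} Z_d$ with the smooth flat morphism $q : Z_d \to Y_d$, Lemma~\ref{L:product} expresses this as $\chi^T_{Y_d}(q_*p^*\alpha_1 \cdot q_*p^*\alpha_2 \cdot q_*p^*\alpha_3)$, which is the claim. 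The main obstacle is the verification that $\Bl_d$ and $Z_d^{(3)}$ have the right geometric structure (smoothness of $Z_d^{(3)}$, fiber-bundle structure of $\Bl_d$ with rational-singularity fibers); once those are in hand, the proof is a direct equivariant analogue of the Grassmannian argument.
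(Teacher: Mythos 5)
Your proposal is correct and follows exactly the paper's own route: the paper proves Theorem~\ref{T:mainC} by noting that parts (d) and (e) of Proposition~\ref{P:cmp} make $\pi$ and $\phi$ birational and then repeating the argument of Theorem~\ref{T:mainA} (projection formula for the birational maps between varieties with rational singularities, then Lemma~\ref{L:product} applied to $Z_d^{(3)} = Z_d \times_{Y_d} Z_d \times_{Y_d} Z_d$), with no need for Theorem~\ref{T:push} or Corollary~\ref{C:ratA} in the small-degree regime. Your extra verifications of the homogeneity of $Z_d$, smoothness of $Z_d^{(3)}$, and the fiber-bundle structure of $\Bl_d$ are exactly the implicit checks the paper relies on when it says the theorem ``is proved exactly as Theorem~\ref{T:mainA}.''
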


It would be possible to compute $K$-theoretic Gromov-Witten invariants
of large degrees if the following is true.

\begin{conj} \label{C:rational}
  Let $X$ be a cominuscule variety that is not a Grassmannian of type
  A and let $d > d_{\max}$.  If $x_1,x_2,x_3$ are general points in
  $X$, then the Gromov-Witten variety $GW_d(x_1,x_2,x_3)$ is
  rational.
\end{conj}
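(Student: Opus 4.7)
The plan is to adapt the strategy of Corollary~\ref{C:ratA} to the cominuscule setting using the Chaput--Manivel--Perrin framework of Proposition~\ref{P:cmp}. The argument splits into two parts: showing that $GW_d(x_1,x_2,x_3)$ is non-empty for $d > d_{\max}$, and exhibiting a dense open subset of it that is rational when $X$ is a cominuscule variety not of type~A.

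For non-emptiness, I would build an explicit stable map $f : C \to X$ of degree $d$ through three general points by gluing curves of small degree. Since $d(x_i,x_j) \leq d_{\max}$ for any two points, Proposition~\ref{P:cmp}(a) allows one to choose an auxiliary point $y \in X$ and write $d = d' + d''$ with $d(x_1,x_2) \leq d' \leq d_{\max}$ and $d(y,x_3) \leq d'' \leq d_{\max}$. Proposition~\ref{P:cmp}(e) then produces a degree $d'$ curve inside some Schubert subvariety $X_{\omega_1}$ through $x_1,x_2,y$ and a degree $d''$ curve inside some $X_{\omega_2}$ through $y,x_3$, which can be glued at $y$ (and padded with contracted components if necessary) to yield a stable map of total degree $d$. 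A generic choice of $y$, together with genericity of $x_1,x_2,x_3$, should guarantee that the resulting map is a generic point of $GW_d(x_1,x_2,x_3)$.

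For rationality, I would stratify an open subset of $GW_d(x_1,x_2,x_3)$ by the combinatorial type of the dual graph of $C$ together with the kernel--span data $\omega_i \in Y_{d_i}$ attached to each non-contracted component. Generically, each such component is uniquely determined by its two endpoints inside $X_{\omega_i}$ by Proposition~\ref{P:cmp}(e), so the remaining freedom consists of the choice of the nodes in $X$, the Schubert data $\omega_i$, and the moduli of the contracted subtree in $\Mb_{0,N}$ — each of these components is rational, and their combination should be birational to an open subset of an affine bundle over a product of rational varieties. Irreducibility and connectedness of the whole $GW_d(x_1,x_2,x_3)$ can then be obtained by a Stein factorization argument in the spirit of the one used at the end of Section~\ref{S:gwvar}.

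The main obstacle will be the rationality of the individual pieces: the subvarieties of stable maps of fixed small degree $d_i \leq d_{\max}$ through two prescribed general endpoints inside an $X_{\omega}$. For Grassmannians of type~A this was handled explicitly via the $\Hom_{\bP^1}(\cE,\cO^{\oplus n})$ parametrization in the proof of Theorem~\ref{T:gwrat}, but for Lagrangian, maximal orthogonal, and exceptional cominuscule varieties the analogue requires an affine parametrization adapted to the type. As the authors already note in Remark~\ref{R:unirat}, in several such cases only unirationality is currently known, and even irreducibility fails to be established for 3-point Gromov-Witten varieties on $\LG(n,2n)$. Overcoming this will likely require either a uniform parametrization by the tangent representation at a Schubert point, in the spirit of \cite{chaput.manivel.ea:quantum}, or a type-by-type analysis exploiting the specific homogeneous bundle structure of each $X(d_i)$.
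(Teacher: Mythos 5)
This statement is not proved in the paper at all: it is stated as Conjecture~\ref{C:rational}, and the authors explicitly record in Remark~\ref{R:unirat} that even in the classical non-type-A cases they can currently establish only unirationality (and, for Lagrangian Grassmannians, not even irreducibility) of the relevant Gromov--Witten varieties. So your proposal should be judged as an attempt to settle an open problem, and as it stands it has a genuine gap precisely at the step you flag as the ``main obstacle'': nothing in your outline actually produces rationality, and you cannot inherit it from Proposition~\ref{P:cmp}, whose parts (d) and (e) are only available for degrees $d \leq d_{\max}$, whereas here $d > d_{\max}$.

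There is also a structural problem with the rationality strategy itself. A general point of $GW_d(x_1,x_2,x_3) = \ev^{-1}(x_1,x_2,x_3)$ has irreducible domain: the locus of stable maps with reducible domain is a union of boundary divisors in $\Mb_{0,3}(X,d)$, so its intersection with the fiber of $\ev$ over a general point of $X^3$ has positive codimension in $GW_d(x_1,x_2,x_3)$. Your stratification by dual graphs in which every non-contracted component has small degree $d_i \leq d_{\max}$ and is pinned down by Proposition~\ref{P:cmp}(e) therefore only parametrizes boundary strata; these cannot be dense, so proving them rational (even if one could) says nothing about birational invariants of $GW_d(x_1,x_2,x_3)$. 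What is needed, and what the type-A proof of Theorem~\ref{T:gwrat} supplies via the explicit $\Hom_{\bP^1}(\cE,\cO^{\oplus n})$ parametrization, is a rational parametrization of the dominant locus of maps $\bP^1 \to X$ of the full degree $d$ through the three points --- and for $\LG$, $\OG$, the quadrics and the exceptional cominuscule spaces no such parametrization is given here or in the cited literature. Your non-emptiness argument by gluing, in the spirit of Corollary~\ref{C:ratA}, is plausible, but the heart of the conjecture remains untouched.
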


In fact, if $\ev : M_d \to X^3$ is the total evaluation map, then any
Gromov-Witten invariant can be written as
\[\begin{split}
   I_d^T(\alpha_1, \alpha_2, \alpha_3)
   &= 
   \euler{M_d}^T(\ev_1^*(\alpha_1) \cdot \ev_2^*(\alpha_2) \cdot
   \ev_3^*(\alpha_3)) \\
   &=
   \euler{X^3}^T(e_1^*(\alpha_1) \cdot e_2^*(\alpha_2) \cdot
   e_3^*(\alpha_3) \cdot \ev_*[\cO_{M_d}])
\end{split}\]
where $e_i : X^3 \to X$ is the $i$-th projection.  If
Conjecture~\ref{C:rational} is true, then Theorem~\ref{T:push} implies
that $\ev_*[\cO_{M_d}] = [\cO_{X^3}]$, and using Lemma~\ref{L:product}
we obtain:

\begin{cons} \label{C:kgwhigh}
  If $X$ and $d$ are as in Conjecture~\ref{C:rational} and $\alpha_1,
  \alpha_2, \alpha_3 \in K^T(X)$, then
  $I_d^T(\alpha_1,\alpha_2,\alpha_3) =
  \euler{X}^T(\alpha_1) \cdot \euler{X}^T(\alpha_2) \cdot
  \euler{X}^T(\alpha_3)$.
\end{cons}

In particular, it would follow that $I_d^T([\cO_{\Omega_1}],
[\cO_{\Omega_2}], [\cO_{\Omega_3}]) = 1$ for all $T$-stable Schubert
varieties $\Omega_1, \Omega_2, \Omega_3$ and $d > d_{\max}$.  As
mentioned in Remark~\ref{R:unirat}, we can prove that the 3-point
Gromov-Witten varieties for maximal orthogonal Grassmannians are
unirational, which suffices to establish Consequences~\ref{C:kgwhigh}
in this case.  The cohomological invariants of arbitrary degrees are
given by the following result.

\begin{thm}
  Let $X$ be a cominuscule variety and let $\beta_1, \beta_2, \beta_3 \in
  H^*_T(X)$.  Then
  \[ I_d^T(\beta_1, \beta_2, \beta_3) = \begin{cases}
    \int_{Y_d}^T q_*p^*(\beta_1) \cdot q_*p^*(\beta_2) \cdot
    q_*p^*(\beta_3) & \text{if $d \leq d_{\max}$} \\
    0 & \text{otherwise.}
    \end{cases}
\]
\end{thm}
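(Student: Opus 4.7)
My plan is to split the argument into the cases $d \leq d_{\max}$ and $d > d_{\max}$, obtaining the first from Theorem~\ref{T:mainC} and the second from the classical truncation of quantum cohomology on cominuscule spaces.

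For $d \leq d_{\max}$, I will extract the cohomological identity from the equivariant $K$-theoretic formula of Theorem~\ref{T:mainC} by taking lowest-degree terms under the equivariant Chern character.  Equation~(\ref{E:ekgw_lowest}) already shows that the leading component of $I_d^T(\cO_{\Omega_1}, \cO_{\Omega_2}, \cO_{\Omega_3})$ in $\wh H_T(\pt)$ is $I_d^T([\Omega_1], [\Omega_2], [\Omega_3])$, and since the equivariant Todd classes on $Y_d$ and on the fibres of $q$ both have leading term $1$, a parallel argument identifies the leading component of $q_*p^*[\cO_\Omega]$ in $H^*_T(Y_d)$ as $q_*p^*[\Omega]$.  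Taking leading terms on both sides of Theorem~\ref{T:mainC} therefore yields the desired formula for Schubert classes, and it extends to arbitrary $\beta_i \in H^*_T(X)$ by $H^*_T(\pt)$-linearity since the equivariant Schubert classes form an $H^*_T(\pt)$-basis of $H^*_T(X)$.  A more direct alternative is to repeat the proof of Theorem~\ref{T:mainC} in equivariant cohomology: the diagram generalizing~(\ref{E:diagram}) is $T$-equivariant, the maps $\pi : \Bl_d \to M_d$ and $\phi : \Bl_d \to Z_d^{(3)}$ are birational by parts~(d) and~(e) of Proposition~\ref{P:cmp} and hence satisfy $\pi_* 1 = 1$ and $\phi_* 1 = 1$, and a cohomological analog of Lemma~\ref{L:product} (immediate from flat base change and the projection formula) concludes the computation, with no Koll\'ar-type input required.

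For $d > d_{\max}$ the indexing variety $Y_d$ is empty, so the right-hand side vanishes and the content of the theorem is the vanishing of $I_d^T(\beta_1, \beta_2, \beta_3)$.  By $H^*_T(\pt)$-linearity it suffices to check this for $\beta_i = [\Omega_{\lambda_i}]^T$, which is the classical statement that the equivariant quantum cohomology ring $\QH^*_T(X)$ of a cominuscule variety $X$ contains no terms $q^d$ with $d > d_{\max}$.  For type-A Grassmannians this is Bertram's quantum Pieri analysis (equivariantly extended by Mihalcea), and for the other cominuscule types it follows from the case-by-case analyses of Buch-Kresch-Tamvakis and Chaput-Manivel-Perrin, whose arguments carry over to equivariant cohomology.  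Note that a uniform dimension-count shortcut handles many degrees automatically: whenever $\int_d c_1(T_X) > 2 \dim X$, the relative dimension of $\ev : \Mb_{0,3}(X,d) \to X^3$ is strictly positive, so $\ev_*(1) = 0$ in $H^*_T(X^3)$ and the invariant vanishes immediately.

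The main obstacle is the narrow range of degrees satisfying both $d > d_{\max}$ and $\int_d c_1(T_X) \leq 2 \dim X$, for which the above dimension count does not force vanishing; examples such as $\Gr(2,8)$ with $d=3$ show this range can be nonempty, and here the classical truncation of $\QH^*_T(X)$ is genuinely used.  Obtaining a uniform type-independent proof of this truncation, parallel to the Chaput-Manivel-Perrin construction of Section~\ref{S:comin}, would be the most natural improvement of the argument.
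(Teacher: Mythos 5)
Your treatment of the small-degree case matches the paper: there the proof is exactly ``by Theorem~\ref{T:mainC}'', either by extracting lowest-degree terms via the equivariant Chern character or by rerunning the birationality argument ($\pi$ and $\phi$ birational by Proposition~\ref{P:cmp}(d),(e)) directly in equivariant cohomology, and your version of this is fine.

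The gap is in the case $d > d_{\max}$. What the paper actually does there is precisely your ``dimension-count shortcut'' together with the step you leave undone: a case-by-case check that for \emph{every} cominuscule space other than a type A Grassmannian and every $d > d_{\max}$ one has $\dim \Mb_{0,3}(X,d) = \dim X + \int_d c_1(T_X) > 3\dim X$, so that $\ev_*[M_d]=0$ and the invariant vanishes (this does hold: for $\LG(n,2n)$, $\OG(n,2n)$, quadrics, the Cayley plane and the Freudenthal variety one checks $d\int_{\text{line}} c_1 > 2\dim X$ whenever $d > d_{\max}$). Your ``narrow range'' where the count fails is in fact entirely inside type A (e.g.\ $\Gr(2,10)$, $d=3$, where $\dim M_d < 3\dim X$; and at your boundary example $\Gr(2,8)$, $d=3$ the inequality is only an equality, so the shortcut does not apply either). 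For type A the paper does not quote outside literature: Theorem~\ref{T:mainA} holds in \emph{all} degrees, and the equivariant cohomological vanishing beyond $d_{\max}$ follows from it (via the observation that $q_*p^*[\Omega]=0$ in cohomology unless the indexing partition contains a $d\times d$ rectangle, impossible for $d>\min(m,k)$, equivalently via the lowest-degree-term comparison). Your substitute -- citing Bertram/Mihalcea for type A and asserting that the Buch--Kresch--Tamvakis and Chaput--Manivel--Perrin analyses ``carry over to equivariant cohomology'' -- is not a proof: those degree bounds are non-equivariant, their enumerative/bijective methods are exactly what this paper says do \emph{not} extend to the equivariant setting, and the equivariant truncation of $\QH_T(X)$ at $d_{\max}$ is essentially the statement being proved, so invoking it is close to circular. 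To close the argument you should (i) actually verify the dimension inequality type by type for the non-type-A spaces, and (ii) handle type A in all degrees from Theorem~\ref{T:mainA} rather than by citation.
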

\begin{proof}
  This is true if $X$ is a Grassmannians of type A by
  Theorem~\ref{T:mainA}, and if $d$ is a small degree by
  Theorem~\ref{T:mainC}.  If $X$ is not of type $A$ and $d >
  d_{\max}$, then a case by case check shows that $\dim(M_d) > 3
  \dim(X)$, which implies that $\ev_*[M_d] = 0$. 
\end{proof}

\begin{remark}
  Let $\Omega \subset X$ be a Schubert variety and set $\wt\Omega =
  q(p^{-1}(\Omega)) \subset Y_d$.  Then we have $q_* p^*([\Omega]) =
  [\wt\Omega]$ if (a translate of) $\Omega$ is contained in the dual
  Schubert variety of $X(d)$, and otherwise $q_* p^*([\Omega]) = 0$.
  This follows from \cite[(6)]{chaput.manivel.ea:quantum} together
  with Lemma~\ref{L:pushschub}.
\end{remark}



\providecommand{\bysame}{\leavevmode\hbox to3em{\hrulefill}\thinspace}
\providecommand{\MR}{\relax\ifhmode\unskip\space\fi MR }
\providecommand{\MRhref}[2]{%
  \href{http://www.ams.org/mathscinet-getitem?mr=#1}{#2}
}
\providecommand{\href}[2]{#2}
\begin{thebibliography}{10}
  
\bibitem{anderson.griffeth.ea:positivity} 
  D.~Anderson, S.~Griffeth, and E.~Miller, \emph{Positivity and
    Kleiman transversality in equivariant $K$-theory of homogeneous
    spaces}, ar$\chi$iv:0808.2785.

\bibitem{bertram:quantum}
A.~Bertram, \emph{Quantum {S}chubert calculus}, Adv. Math. \textbf{128} (1997),
  no.~2, 289--305. \MR{MR1454400 (98j:14067)}

\bibitem{bierstone.milman:canonical}
E.~Bierstone and P.~D. Milman, \emph{Canonical desingularization in
  characteristic zero by blowing up the maximum strata of a local invariant},
  Invent. Math. \textbf{128} (1997), no.~2, 207--302. \MR{MR1440306
  (98e:14010)}

\bibitem{brion:lectures}
M.~Brion, \emph{Lectures on the geometry of flag varieties}, Topics in
  cohomological studies of algebraic varieties, Trends Math., Birkh\"auser,
  Basel, 2005, pp.~33--85. \MR{MR2143072 (2006f:14058)}

\bibitem{brion.kumar:frobenius}
M.~Brion and S.~Kumar, \emph{Frobenius splitting methods in geometry and
  representation theory}, Progress in Mathematics, vol. 231, Birkh\"auser
  Boston Inc., Boston, MA, 2005. \MR{MR2107324 (2005k:14104)}

\bibitem{buch:grothendieck}
A.~S. Buch, \emph{Grothendieck classes of quiver varieties}, Duke Math. J.
  \textbf{115} (2002), no.~1, 75--103. \MR{MR1932326 (2003m:14018)}

\bibitem{buch:littlewood-richardson}
\bysame, \emph{A {L}ittlewood-{R}ichardson rule for the {$K$}-theory of
  {G}rassmannians}, Acta Math. \textbf{189} (2002), no.~1, 37--78.
  \MR{MR1946917 (2003j:14062)}

\bibitem{buch:quantum}
\bysame, \emph{Quantum cohomology of {G}rassmannians}, Compositio Math.
  \textbf{137} (2003), no.~2, 227--235. \MR{MR1985005 (2004c:14105)}

\bibitem{buch:combinatorial}
\bysame, \emph{Combinatorial {$K$}-theory}, Topics in cohomological studies of
  algebraic varieties, Trends Math., Birkh\"auser, Basel, 2005, pp.~87--103.
  \MR{MR2143073 (2007a:14056)}

\bibitem{buch.kresch.ea:gromov-witten}
A.~S. Buch, A.~Kresch, and H.~Tamvakis, \emph{Gromov-{W}itten invariants on
  {G}rassmannians}, J. Amer. Math. Soc. \textbf{16} (2003), no.~4, 901--915
  (electronic). \MR{MR1992829 (2004h:14060)}

\bibitem{buch.kresch.ea:quantum}
\bysame, \emph{Quantum {P}ieri rules for isotropic {G}rassmannians}, 
  ar$\chi$iv:0809.4966.

\bibitem{chaput.manivel.ea:quantum}
P.E. Chaput, L.~Manivel, and N.~Perrin, \emph{Quantum cohomology of minuscule
  homogeneous spaces}, preprint, 2006.

\bibitem{chriss.ginzburg:representation}
N.~Chriss and V.~Ginzburg, \emph{Representation theory and complex geometry},
  Birkh\"auser Boston Inc., Boston, MA, 1997. \MR{MR1433132 (98i:22021)}

\bibitem{ciocan-fontanine:on}
I.~Ciocan-Fontanine, \emph{On quantum cohomology rings of partial flag
  varieties}, Duke Math. J. \textbf{98} (1999), no.~3, 485--524. \MR{MR1695799
  (2000d:14058)}

\bibitem{coskun.vakil:geometric}
I.~Coskun and R.~Vakil, \emph{Geometric positivity in the cohomology of
  homogeneous spaces and generalized schubert calculus}, math.AG/0610538.

\bibitem{edidin.graham:riemann-roch}
D.~Edidin and W.~Graham, \emph{Riemann-{R}och for equivariant {C}how groups},
  Duke Math. J. \textbf{102} (2000), no.~3, 567--594. \MR{MR1756110
  (2001f:14018)}


\bibitem{fomin.gelfand.ea:quantum}
S.~Fomin, S.~Gelfand, and A.~Postnikov, \emph{Quantum {S}chubert polynomials},
  J. Amer. Math. Soc. \textbf{10} (1997), no.~3, 565--596. \MR{MR1431829
  (98d:14063)}

\bibitem{fulton:intersection}
W.~Fulton, \emph{Intersection theory}, second ed., Ergebnisse der Mathematik
  und ihrer Grenzgebiete. 3. Folge. A Series of Modern Surveys in Mathematics
  [Results in Mathematics and Related Areas. 3rd Series. A Series of Modern
  Surveys in Mathematics], vol.~2, Springer-Verlag, Berlin, 1998. \MR{MR1644323
  (99d:14003)}


\bibitem{fulton.pandharipande:notes}
W.~Fulton and R.~Pandharipande, \emph{Notes on stable maps and quantum
  cohomology}, Algebraic geometry---Santa Cruz 1995, Proc. Sympos. Pure Math.,
  vol.~62, Amer. Math. Soc., Providence, RI, 1997, pp.~45--96. \MR{MR1492534
  (98m:14025)}

\bibitem{fulton.woodward:on}
W.~Fulton and C.~Woodward, \emph{On the quantum product of {S}chubert classes},
  J. Algebraic Geom. \textbf{13} (2004), no.~4, 641--661. \MR{MR2072765
  (2005d:14078)}

\bibitem{givental:equivariant}
  A.~Givental, \emph{Equivariant {G}romov-{W}itten invariants}, Internat. Math.
  Res. Notices (1996), no.~13, 613--663. \MR{MR1408320 (97e:14015)}

\bibitem{givental:on}
\bysame, \emph{On the {WDVV} equation in quantum {$K$}-theory}, Michigan Math.
  J. \textbf{48} (2000), 295--304, Dedicated to William Fulton on the occasion
  of his 60th birthday. \MR{MR1786492 (2001m:14078)}

\bibitem{givental.kim:quantum}
A.~Givental and B.~Kim, \emph{Quantum cohomology of flag manifolds and {T}oda
  lattices}, Comm. Math. Phys. \textbf{168} (1995), no.~3, 609--641.
  \MR{MR1328256 (96c:58027)}

\bibitem{givental.lee:quantum}
A.~Givental and Y.-P. Lee, \emph{Quantum {$K$}-theory on flag manifolds,
  finite-difference {T}oda lattices and quantum groups}, Invent. Math.
  \textbf{151} (2003), no.~1, 193--219. \MR{MR1943747 (2004g:14063)}

\bibitem{graham.kumar:on} 
  W.~Graham and S.~Kumar, \emph{On positivity in ${T}$-equivariant
  ${K}$-theory of flag varieties}, ar$\chi$iv:0801.2776v1.

\bibitem{griffeth.ram:affine} 
  S.~Griffeth and A.~Ram, \emph{Affine {H}ecke algebras and the
    {S}chubert calculus}, European J. Combin.  \textbf{25} (2004),
  no.~8, 1263--1283.

\bibitem{griffiths.harris:principles}
P.~Griffiths and J.~Harris, \emph{Principles of algebraic geometry}, Wiley
  Classics Library, John Wiley \& Sons Inc., New York, 1994, Reprint of the
  1978 original. \MR{MR1288523 (95d:14001)}

\bibitem{hartshorne:algebraic*1}
R.~Hartshorne, \emph{Algebraic geometry}, Springer-Verlag, New York, 1977,
  Graduate Texts in Mathematics, No. 52. \MR{MR0463157 (57 \#3116)}

\bibitem{kim:on*3}
B.~Kim, \emph{On equivariant quantum cohomology}, Internat. Math. Res. Notices
  (1996), no.~17, 841--851. \MR{MR1420551 (98h:14013)}

\bibitem{kim:quantum*1}
\bysame, \emph{Quantum cohomology of flag manifolds {$G/B$} and
quantum {T}oda lattices}, Ann. of Math. (2) \textbf{149} (1999),
no.~1, 129--148.  \MR{MR1680543 (2001c:14081)}

\bibitem{kim.pandharipande:connectedness}
B.~Kim and R.~Pandharipande, \emph{The connectedness of the moduli space of
  maps to homogeneous spaces}, Symplectic geometry and mirror symmetry (Seoul,
  2000), World Sci. Publ., River Edge, NJ, 2001, pp.~187--201. \MR{MR1882330
  (2002k:14021)}

\bibitem{kleiman:transversality}
  S.~L. Kleiman, \emph{The transversality of a general translate}, Compositio
  Math. \textbf{28} (1974), 287--297. \MR{MR0360616 (50 \#13063)}

\bibitem{knutson:conjectural}
A.~Knutson, \emph{A conjectural rule for ${GL}_n$ {S}chubert calculus},
  preprint, 1999.

\bibitem{knutson.tao:puzzles}
A.~Knutson and T.~Tao, \emph{Puzzles and (equivariant) cohomology of
  {G}rassmannians}, Duke Math. J. \textbf{119} (2003), no.~2, 221--260.
  \MR{MR1997946 (2006a:14088)}

\bibitem{kollar:higher}
J.~Koll{\'a}r, \emph{Higher direct images of dualizing sheaves. {I}}, Ann. of
  Math. (2) \textbf{123} (1986), no.~1, 11--42. \MR{MR825838 (87c:14038)}

\bibitem{kontsevich:enumeration}
M.~Kontsevich, \emph{Enumeration of rational curves via torus actions}, The
  moduli space of curves (Texel Island, 1994), Progr. Math., vol. 129,
  Birkh\"auser Boston, Boston, MA, 1995, pp.~335--368. \MR{MR1363062
  (97d:14077)}

\bibitem{kontsevich.manin:quantum}
M.~Kontsevich and Yu. Manin, \emph{Quantum cohomology of a product}, Invent.
  Math. \textbf{124} (1996), no.~1-3, 313--339, With an appendix by R.
  Kaufmann. \MR{MR1369420 (97e:14064)}


\bibitem{lam.shimozono:quantum}
T.~Lam and M.~Shimozono, \emph{Quantum cohomology of ${G/P}$ and homology of
  affine {G}rassmannian}, ar$\chi$iv:0705.1386.


\bibitem{lee:quantum}
Y.-P. Lee, \emph{Quantum {$K$}-theory. {I}. {F}oundations}, Duke Math. J.
  \textbf{121} (2004), no.~3, 389--424. \MR{MR2040281 (2005f:14107)}

\bibitem{lee.pandharipande:reconstruction}
Y.-P. Lee and R.~Pandharipande, \emph{A reconstruction theorem in quantum
  cohomology and quantum {$K$}-theory}, Amer. J. Math. \textbf{126} (2004),
  no.~6, 1367--1379. \MR{MR2102400 (2006c:14082)}

\bibitem{lenart:combinatorial}
C.~Lenart, \emph{Combinatorial aspects of the {$K$}-theory of {G}rassmannians},
  Ann. Comb. \textbf{4} (2000), no.~1, 67--82. \MR{MR1763950 (2001j:05124)}

\bibitem{lenart.maeno:quantum}
C.~Lenart and T.~Maeno, \emph{Quantum {G}rothendieck polynomials},
  math.CO/0608232.

\bibitem{lenart.postnikov:affine}
C.~Lenart and A.~Postnikov, \emph{Affine {W}eyl groups in {$K$}-theory and
  representation theory}, to appear in Int. Math. Res. Not., available at
  math.RT/0309207, 2005.

\bibitem{mihalcea:equivariant}
C.~L.~Mihalcea, \emph{On equivariant quantum cohomology of homogeneous
spaces: Chevalley formulae and algorithms}, Duke Math. J. {\bf 140}
(2007), no. 2, 321--350.

\bibitem{mihalcea:equivariant*1}
\bysame, \emph{Equivariant quantum {S}chubert calculus}, Adv. Math.
  \textbf{203} (2006), no.~1, 1--33. \MR{MR2231042 (2007c:14061)}


\bibitem{miller.speyer:kleiman-bertini}
E.~Miller and D.~Speyer, \emph{A {K}leiman-{B}ertini theorem for sheaf tensor
  products}, math.AG:0601202v4.

\bibitem{pandharipande:canonical}
R.~Pandharipande, \emph{The canonical class of {$\overline{M}\sb {0,n}(\bold
  P\sp r,d)$} and enumerative geometry}, Internat. Math. Res. Notices (1997),
  no.~4, 173--186. \MR{MR1436774 (98h:14067)}

\bibitem{postnikov:affine}
A.~Postnikov, \emph{Affine approach to quantum {S}chubert calculus}, Duke Math.
  J. \textbf{128} (2005), no.~3, 473--509. \MR{MR2145741 (2006e:05182)}

\bibitem{reichstein.youssin:equivariant}
Z.~Reichstein and B.~Youssin, \emph{Equivariant resolution of points of
  indeterminacy}, Proc. Amer. Math. Soc. \textbf{130} (2002), no.~8, 2183--2187
  (electronic). \MR{MR1896397 (2003c:14017)}

\bibitem{rietsch:totally}
K.~Rietsch, \emph{Totally positive {T}oeplitz matrices and quantum cohomology
  of partial flag varieties}, J. Amer. Math. Soc. \textbf{16} (2003), no.~2,
  363--392.

\bibitem{sierra:general}
  S.~Sierra, \emph{A general homological {K}leiman-{B}ertini {T}heorem},
  0705.0055v1.

\bibitem{tamvakis:quantum}
  H.~Tamvakis, \emph{Quantum cohomology of isotropic {G}rassmannians}, Geometric
  methods in algebra and number theory, Progr. Math., vol. 235, Birkh\"auser
  Boston, Boston, MA, 2005, pp.~311--338. \MR{MR2166090 (2006k:14103)}


\bibitem{thomsen:irreducibility}
J.~F. Thomsen, \emph{Irreducibility of {$\overline{M}\sb {0,n}(G/P,\beta)$}},
  Internat. J. Math. \textbf{9} (1998), no.~3, 367--376. \MR{MR1625369
  (99g:14032)}

\bibitem{vakil:geometric}
R.~Vakil, \emph{A geometric {L}ittlewood-{R}ichardson rule}, Ann. of Math. (2)
  \textbf{164} (2006), no.~2, 371--421, Appendix A written with A. Knutson.
  \MR{MR2247964 (2007f:05184)}

\bibitem{villamayor-u:patching}
O.~E. Villamayor~U., \emph{Patching local uniformizations}, Ann. Sci. \'Ecole
  Norm. Sup. (4) \textbf{25} (1992), no.~6, 629--677. \MR{MR1198092
  (93m:14012)}

\bibitem{witten:two-dimensional}
E.~Witten, \emph{Two-dimensional gravity and intersection theory on moduli
  space}, Surveys in differential geometry (Cambridge, MA, 1990), Lehigh Univ.,
  Bethlehem, PA, 1991, pp.~243--310. \MR{MR1144529 (93e:32028)}

\bibitem{woodward:on}
C.~T. Woodward, \emph{On {D}. {P}eterson's comparison formula for
  {G}romov-{W}itten invariants of {$G/P$}}, Proc. Amer. Math. Soc. \textbf{133}
  (2005), no.~6, 1601--1609 (electronic). \MR{MR2120266 (2005j:14080)}

\bibitem{yong:degree}
A.~Yong, \emph{Degree bounds in quantum {S}chubert calculus}, Proc. Amer. Math.
  Soc. \textbf{131} (2003), no.~9, 2649--2655 (electronic). \MR{MR1974319
  (2004c:14102)}

\bibitem{zak:tangents}
F.~L.~Zak, \emph{Tangents and secants of algebraic varieties},
American Math. Soc. , Providence, RI 1993.
 
\end{thebibliography}


\providecommand{\bysame}{\leavevmode\hbox to3em{\hrulefill}\thinspace}
\providecommand{\MR}{\relax\ifhmode\unskip\space\fi MR }
\providecommand{\MRhref}[2]{%
  \href{http://www.ams.org/mathscinet-getitem?mr=#1}{#2}
}
\providecommand{\href}[2]{#2}


\end{document}